\numberwithin{equation}{section}
\newtheorem{theorem}{Theorem}[section]
\newtheorem{lemma}[theorem]{Lemma}
\newtheorem{proposition}[theorem]{Proposition}
\newtheoremstyle{remarkstyle}
{}{}{}{}{\bfseries}{.}{ }{\thmname{#1}\thmnumber{ #2}\thmnote{ (#3)}}
\theoremstyle{remarkstyle}
\newtheorem{remark}{Remark}[section]
\newcommand{\R}{\mathbb R}
\newcommand{\C}{\mathbb C}
\newcommand{\Ac}{\mathcal A}
\newcommand{\vareps}{\varepsilon}
\DeclareMathOperator*{\gamc}{\gamma_c}
\DeclareMathOperator*{\sigc}{\sigma_c}
\DeclareMathOperator*{\deltc}{\delta_c}
\DeclareMathOperator*{\loc}{loc}
\DeclareMathOperator*{\ct}{c}
\DeclareMathOperator*{\opt}{opt}
\DeclareMathOperator*{\inls}{INLS}
\DeclareMathOperator*{\rea}{Re}
\DeclareMathOperator*{\ima}{Im}
\newcommand{\scal}[1]{\left\langle #1 \right\rangle}
\title[Dynamics for Inhomogeneous NLS]{Long time dynamics of non-radial solutions to inhomogeneous nonlinear Schr\"odinger equations} 
\author[V. D. Dinh]{Van Duong Dinh}
\address[V. D. Dinh]{Laboratoire Paul Painlev\'e UMR 8524, Universit\'e de Lille CNRS, 59655 Villeneuve d'Ascq Cedex, France
	and 
	Department of Mathematics, HCMC University of Education, 280 An Duong Vuong, Ho Chi Minh, Vietnam}
\email{contact@duongdinh.com}
\author[S. Keraani]{Sahbi Keraani}
\address[S. Keraani]{Laboratoire Paul Painlev\'e UMR 8524, Universit\'e de Lille CNRS, 59655 Villeneuve d'Ascq Cedex, France}
\email{sahbi.keraani@univ-lille.fr}
\subjclass[2010]{35Q44; 35Q55}
\keywords{Inhomogeneous nonlinear Schr\"odinger equation; Global existence; Blow-up; Ground state}
\begin{document}
	
	\begin{abstract}
		We study long time dynamics of non-radial solutions to the focusing inhomogeneous nonlinear Schr\"odinger equation. By using the concentration/compactness and rigidity method, we establish a scattering criterion for non-radial solutions to the equation. We also prove a non-radial blow-up criterion for the equation whose proof makes use of localized virial estimates. As a byproduct of these criteria, we study long time dynamics of non-radial solutions to the equation with data lying below, at, and above the ground state threshold. In addition, we provide a new argument showing the existence of finite time blow-up solution to the equation with cylindrically symmetric data. The ideas developed in this paper are robust and can be applicable to other types of nonlinear Schr\"odinger equations.
	\end{abstract}
	
	\maketitle
	
	\section{Introduction} 
	\label{sec:intro}
	\setcounter{equation}{0}
	The nonlinear Schr\"odinger equation (NLS) is one of the most important equations in nonlinear optics. It models the propagation of intense laser beams in a homogeneous bulk medium with a Kerr nonlinearity. It is well-known that NLS governed the beam propagation cannot support stable high-power propagation in a homogeneous bulk media. At the end of the last century, it was suggested that stable high-power propagation can be achieved in plasma by sending a preliminary laser beam that creates a channel with a reduced electron density, and thus reduces the nonlinear inside the channel (see e.g., \cite{Gill, LT}). Under these conditions, the beam propagation can be modeled by the inhomogeneous nonlinear Schr\"odinger equation of the form
	\begin{align} \label{eq:inls-K}
	i \partial_t  u + \Delta u + K(x) |u|^\alpha u =0, \quad (t,x) \in \R \times \R^N,
	\end{align}
	where $u$ is the electric field in laser and optics, $\alpha>0$ is the power of nonlinear interaction, and the potential $K(x)$ is proportional to the electron density. By means of variational approximation and direct simulations, Towers and Malomed \cite{TM} observed that for a certain type of nonlinear medium, \eqref{eq:inls-K} gives rise to completely stable beams. 
	
	The equation \eqref{eq:inls-K} has been attracted a lot of interest from the mathematical community. When the potential $K(x)$ is constant, \eqref{eq:inls-K} is the usual nonlinear Schr\"odinger equation which has been studied extensively in the past decades (see e.g., the monographs \cite{Cazenave, SS, Tao}). 
	
	In the case of non-constant bounded potential $K(x)$, Merle \cite{Merle} proved the existence and nonexistence of minimal blow-up solutions to \eqref{eq:inls-K} with $\alpha=\frac{4}{N}$ and $K_1\leq K(x) \leq K_2$, where $K_1$ and $K_2$ are positive constants. Based on the work of Merle, Rapha\"el and Szeftel \cite{RS} established sufficient conditions for the existence, uniqueness, and charaterization of minimial blow-up solutions to the equation. Fibich and Wang \cite{FW}, and Liu and Wang \cite{LWW} investigated the stability and instability of solitary waves for \eqref{eq:inls-K} with $\alpha \geq \frac{4}{N}$ and $K(x) = K(\epsilon x)$, where $\epsilon>0$ is a small parameter and $K \in C^4(\R^N) \cap L^\infty(\R^N)$. 
	
	When the potential $K(x)$ is unbounded, the problem becomes more subtle. The case $K(x)= |x|^b, b>0$ was studied in several works, for instance, Chen and Guo \cite{CG}, and Chen \cite{Chen} established sharp criteria for the global existence and blow-up, and Zhu \cite{Zhu} studied the existence and dynamical properties of blow-up solutions. When $K(x)$ behaves like $|x|^{-b}$ with $b>0$, De Bouard and Fukuizumi \cite{BF} studied the stability of standing waves for \eqref{eq:inls-K} with $\alpha<\frac{4-2b}{N}$. Fukuizumi and Ohta \cite{FO} established the instability of standing waves for \eqref{eq:inls-K} with $\alpha>\frac{4-2b}{N}$ (see also \cite{JC, GS} and references therein for other studies related to standing waves for this type of equation). 
	
	In this paper, we consider the Cauchy problem for a class of focusing inhomogeneous nonlinear Schr\"odinger equations (INLS)
	\begin{equation} \label{eq:inls}
	\left\{ 
	\begin{array}{rcl}
	i\partial_t u +\Delta u &=& - |x|^{-b}|u|^\alpha u, \quad (t,x) \in \R \times \R^N, \\
	\left.u\right|_{t=0}&=& u_0 \in H^1,
	\end{array}
	\right.
	\end{equation}
	where $u: \mathbb{R} \times \mathbb{R}^N \rightarrow \mathbb{C}$, $u_0: \mathbb{R}^N \rightarrow \mathbb{C}$, $N\geq 1$, $0<b<\min\{2,N\}$, and $\frac{4-2b}{N}<\alpha <\alpha(N)$ with
	\begin{align}\label{eq:alpha-N}
	\alpha(N) := \left\{
	\begin{array}{ccl}
	\frac{4-2b}{N-2} &\text{if}& N\geq 3, \\
	\infty &\text{if}& N=1,2.	
	\end{array}
	\right.
	\end{align} 
	This equation plays an important role as a limiting equation in the analysis of \eqref{eq:inls-K} with $K(x) \sim |x|^{-b}$ as $|x|\rightarrow \infty$ (see e.g., \cite{GS, Genoud-2d}).
	
	The local well-posedness for \eqref{eq:inls} was studied by Geneoud and Stuart \cite[Appendix]{GS}. More precisely, they proved that \eqref{eq:inls} is locally well-posed in $H^1$ for $N\geq 1$, $0<b<\min \{2, N\}$, and $0<\alpha<\alpha(N)$. The proof of this result is based on the energy method developed by Cazenave \cite{Cazenave}, which does not use Strichartz estimates. See also \cite{Guzman, Dinh-weight} for other proofs based on Strichartz estimates and the contraction mapping argument. Note that the local well-posedness in \cite{Guzman, Dinh-weight} is more restrictive than the one in \cite{GS}. However, it provides more information on the local solutions, for instance, local solutions belong to $L^q_{\loc}((-T_*,T^*),W^{1,r}(\R^N))$ for any Schr\"odinger admissible pair $(q,r)$ (see Section \ref{sec:scat} for the definition of $L^2$ admissibility), where $(-T_*,T^*)$ is the maximal time interval of existence. Note that the latter property plays an important role in the scattering theory.
	
	It is well-known that solutions to \eqref{eq:inls} satisfy the conservation laws of mass and energy
	\begin{align*}
	M(u(t)) &= \|u(t)\|^2_{L^2} = M(u_0), \tag{Mass} \\
	E(u(t)) &= \frac{1}{2} \|\nabla u(t)\|^2_{L^2} - \frac{1}{\alpha+2} \int |x|^{-b} |u(t,x)|^{\alpha+2} dx = E(u_0). \tag{Energy}
	\end{align*}  
	The equation \eqref{eq:inls} also has the following scaling invariance
	\begin{align} \label{eq:scaling}
	u_\lambda (t,x) := \lambda^{\frac{2-b}{\alpha}} u(\lambda^2 t, \lambda x), \quad \lambda>0.
	\end{align}
	A direct calculation gives
	\[
	\|u_\lambda(0)\|_{\dot{H}^\gamma} = \lambda^{\gamma+\frac{2-b}{\alpha} -\frac{N}{2}} \|u_0\|_{\dot{H}^\gamma}
	\]
	which shows that \eqref{eq:scaling} leaves the $\dot{H}^{\gamc}$-norm of initial data invariant, where
	\begin{align}  \label{eq:gamc}
	\gamc:= \frac{N}{2}-\frac{2-b}{\alpha}.
	\end{align}
	The condition $\frac{4-2b}{N}<\alpha<\alpha(N)$ is equivalent to $0<\gamc<1$ which corresponds to the mass-supercritical and energy-subcritical range (intercritical range, for short). For later uses, it is convenient to introduce the following exponent
	\begin{align} \label{eq:sigc}
	\sigc:= \frac{1-\gamc}{\gamc} = \frac{4-2b-(N-2)\alpha}{N\alpha-4+2b}.
	\end{align}
	
	The main purpose of the present paper is to study long time dynamics (global existence, energy scattering, and finite time blow-up) of non-radial solutions to \eqref{eq:inls}. Before stating our contributions, let us recall known results related to dynamics of \eqref{eq:inls} in the intercritical range. 
	
	In \cite{Farah}, Farah showed the global existence for \eqref{eq:inls} with $N\geq 1$ and $0<b<\min \{2,N\}$ by assuming $u_0 \in H^1$ and
	\begin{align} 
	E(u_0) [M(u_0)]^{\sigc} &< E(Q) [M(Q)]^{\sigc}, \label{eq:ener-below} \\
	\|\nabla u_0\|_{L^2} \|u_0\|^{\sigc}_{L^2} &< \|\nabla Q\|_{L^2} \|Q\|^{\sigc}_{L^2}, \label{eq:grad-glob-below}
	\end{align}
	where $Q$ is the unique postive radial solution to the elliptic equation
	\begin{align} \label{eq:ground}
	-\Delta Q+ Q - |x|^{-b} |Q|^\alpha Q =0.
	\end{align}
	He also proved the finite time blow-up for \eqref{eq:inls} with $u_0 \in \Sigma := H^1 \cap L^2(|x|^2 dx)$ satisfying \eqref{eq:ener-below} and 
	\begin{align} \label{eq:grad-blow-below}
	\|\nabla u_0\|_{L^2} \|u_0\|^{\sigc}_{L^2}>\|\nabla Q\|_{L^2} \|Q\|^{\sigc}_{L^2}.
	\end{align}
	The latter result was extended to radial data by the first author in \cite{Dinh-NA}. Note that the uniqueness of positive radial solution to \eqref{eq:ground} was established by Yanagida \cite{Yanagida} for $N\geq 3$, Genoud \cite{Genoud-2d} for $N=2$, and Toland \cite{Toland} for $N=1$. 
	
	The energy scattering (or asymptotic behavior) for \eqref{eq:inls} was first established by Farah and Guzm\'an \cite{FG-JDE} with $0<b<\frac{1}{2}, \alpha=2, N=3$, and radial data. The proof of this result is based on the concentration/compactness and rigidity argument introduced by Kenig and Merle \cite{KM}. This scattering result was later extended to dimensions $N\geq 2$ in \cite{FG-BBMS} by using the same concentration/compactness and rigidity method. 
	
	Later, Campos \cite{Campos} made use of a new idea of Dodson and Murphy \cite{DM-rad} to give an alternative simple proof for the radial scattering results of Farah and Guzm\'an. He also extends the validity of $b$ in dimensions $N\geq 3$. Note that the idea of Dodson and Murphy is a combination of a scattering criterion of Tao \cite{Tao-DPDE}, localized virial estimates, and radial Sobolev embedding.
	
	Afterwards, Xu and Zhao \cite{XZ}, and the first author \cite{Dinh-2D} have simultaneously showed the energy scattering for \eqref{eq:inls} with $0<b<1, N=2$, and radial data. The proof relies on a new approach of Arora, Dodson, and Murhpy \cite{ADM}, which is a refined version of the one in \cite{DM-rad}. 
	
	In \cite{CC}, Campos and Cardoso studied long time dynamics such as global existence, energy scattering, and finite time blow-up of $H^1$-solutions to \eqref{eq:inls} with data in $\Sigma$ lying above the ground state threshold. 
	
	Recently, Miao, Murphy, and Zheng \cite{MMZ} showed a new nonlinear profile for non-radial solutions related to \eqref{eq:inls}. In particular, they constructed nonlinear profiles with data living far away from the origin. This allows them to show the energy scattering of non-radial solution to \eqref{eq:inls} with $0<b<\frac{1}{2}$, $\alpha=2$, and $N=3$. This result was extended to any dimensions $N\geq 2$ and $0<b<\min \left\{2, \frac{N}{2}\right\}$ by Cardoso, Farah, Guzm\'an, and Murphy \cite{CFGM}. 
	
	We also mention the works \cite{Dinh-JEE, Dinh-2D} for the energy scattering for the defocusing problem INLS and \cite{CHL} for the energy scattering for the focusing energy-critical INLS.
	
	Motivated by the aforementioned works, we study the global existence, energy scattering, and finite time blow-up of non-radial solutions to \eqref{eq:inls}. To this end, let us start with the following scattering criterion for \eqref{eq:inls}. 
	
	\begin{theorem} [Scattering criterion] \label{theo-scat-crite}
		Let $N\geq 1$, $0<b<\min \{2,N\}$, and $\frac{4-2b}{N} < \alpha < \alpha(N)$. Let $u$ be a solution to \eqref{eq:inls} defined on the maximal forward time interval of existence $[0,T^*)$. Assume that
		\begin{align} \label{scat-crite}
		\sup_{t\in [0,T^*)} P(u(t))[M(u(t))]^{\sigc} < P(Q) [M(Q)]^{\sigc},
		\end{align}
		where
		\begin{align} \label{eq:P}
		P(f):= \int |x|^{-b}|f(x)|^{\alpha+2} dx.
		\end{align}
		Then $T^*=\infty$. Moreover, if we assume in addition that $N\geq 2$ and $0<b<\min \left\{2,\frac{N}{2}\right\}$, then the solution scatters in $H^1$ forward in time, i.e., there exists $u_+\in H^1$ such that
		\begin{align} \label{defi-scat}
		\lim_{t\rightarrow \infty} \|u(t) - e^{it\Delta} u_+\|_{H^1} =0.
		\end{align}
		A similar statement holds for negative times.
	\end{theorem}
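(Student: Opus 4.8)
The plan is to treat the two conclusions separately, the global statement being elementary and the scattering statement requiring the full concentration/compactness and rigidity machinery. For $T^*=\infty$, conservation of mass makes $[M(u(t))]^{\sigc}$ constant, so \eqref{scat-crite} bounds $P(u(t))$ uniformly on $[0,T^*)$; conservation of energy then gives
\[
\|\nabla u(t)\|_{L^2}^2 = 2E(u_0) + \frac{2}{\alpha+2}\,P(u(t)),
\]
a uniform bound on $[0,T^*)$, and the $H^1$ blow-up alternative of the local theory \cite{GS} forces $T^*=\infty$ (and likewise for negative times). This part needs only $N\ge 1$ and $0<b<\min\{2,N\}$.

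The scattering proof rests on reading \eqref{scat-crite} variationally. Writing $K(u):=\|\nabla u\|_{L^2}^2-\frac{N\alpha+2b}{2(\alpha+2)}P(u)$, I would combine the sharp Gagliardo--Nirenberg inequality
\[
P(f)\le C_{\mathrm{GN}}\,\|\nabla f\|_{L^2}^{\frac{N\alpha+2b}{2}}\,\|f\|_{L^2}^{\frac{4-2b-(N-2)\alpha}{2}},
\]
whose optimizers solve \eqref{eq:ground}, with the Pohozaev identities, which yield $\|\nabla Q\|_{L^2}^2=\frac{N\alpha+2b}{2(\alpha+2)}P(Q)$, i.e.\ $K(Q)=0$. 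Arguing by contrapositive: if $K(u)\le 0$, then $\|\nabla u\|_{L^2}^2\le\frac{N\alpha+2b}{2(\alpha+2)}P(u)$, and inserting the Gagliardo--Nirenberg bound (in its scale-invariant form) forces $P(u)[M(u)]^{\sigc}\ge P(Q)[M(Q)]^{\sigc}$, contradicting \eqref{scat-crite}. Hence $K(u(t))>0$ for every $t$, and together with the uniform upper bound on $\|\nabla u(t)\|_{L^2}$ from the previous paragraph this upgrades to a uniform gap $K(u(t))\ge\delta>0$ on any orbit whose gradient stays bounded away from $0$.

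To promote global existence to scattering when $N\ge 2$ and $0<b<\min\{2,N/2\}$, I would run the Kenig--Merle scheme \cite{KM}, drawing small-data scattering and a long-time perturbation lemma from the Strichartz-based local theory \cite{Guzman,Dinh-weight}. Let $P_c$ be the supremum of all levels $L<P(Q)[M(Q)]^{\sigc}$ such that every solution with $\sup_t P(u(t))[M(u(t))]^{\sigc}\le L$ scatters, and suppose for contradiction that $P_c<P(Q)[M(Q)]^{\sigc}$. A linear profile decomposition adapted to \eqref{eq:inls}, fed through the perturbation lemma, should extract a minimal non-scattering critical element $u_c$ whose orbit is precompact in $H^1$ up to the scaling symmetry and a translation path $x(t)$, still satisfying \eqref{scat-crite} and hence the coercivity above. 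The essential difficulty, and the reason the argument is genuinely non-radial, is that the weight $|x|^{-b}$ breaks translation invariance: a profile whose spatial core $x_n$ satisfies $|x_n|\to\infty$ sees a coefficient $|x|^{-b}\sim|x_n|^{-b}\to 0$, so its nonlinear profile is governed by the \emph{homogeneous} intercritical NLS, whose sub-threshold scattering is known, and such profiles therefore cannot survive into $u_c$. Constructing these ``at-infinity'' nonlinear profiles and threading them through the stability argument, in the spirit of \cite{MMZ,CFGM}, is the main obstacle; the restrictions $N\ge 2$ and $b<N/2$ (so that $|x|^{-b}\in L^2_{\loc}$) enter precisely in these nonlinear and dispersive estimates.

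It then remains to prove rigidity, i.e.\ that $u_c\equiv 0$. Precompactness of the orbit furnishes uniform-in-time localization, and since the inhomogeneity pins the solution near the origin the translation path $x(t)$ can be shown to remain bounded, so a cutoff about the origin captures $u_c$ uniformly in $t$. I would then compute a localized virial identity: for a cutoff $\phi_R\sim|x|^2$ on $|x|\le R$,
\[
\frac{d^2}{dt^2}\int\phi_R\,|u_c|^2\,dx = 8\,K(u_c(t)) + \mathcal A_R(t),
\]
where the error $\mathcal A_R$ is supported on $|x|\gtrsim R$ and, by compactness, is made arbitrarily small uniformly in $t$. The coercivity $K(u_c(t))\ge\delta>0$ then bounds the right-hand side below by $4\delta$ for $R$ large, so $\frac{d}{dt}\int\phi_R|u_c|^2\,dx$ grows linearly in $t$; but this quantity is bounded by $\lesssim R\,\|u_c\|_{L^2}\|\nabla u_c\|_{L^2}$, a contradiction unless $u_c\equiv 0$. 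Hence $P_c=P(Q)[M(Q)]^{\sigc}$, so every solution satisfying \eqref{scat-crite} scatters forward in time, and the backward statement follows by time reversal.
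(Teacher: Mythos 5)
Your global-existence paragraph and the coercivity reading of \eqref{scat-crite} are correct (the latter is exactly Lemma \ref{lem-coer} in contrapositive form), and the overall Kenig--Merle skeleton is the right one. But the proposal has a genuine gap at the heart of the concentration/compactness step: you run the induction on the quantity $P_c$, i.e.\ on the \emph{time-supremum of the non-conserved} potential energy $\sup_t P(u(t))[M(u(t))]^{\sigc}$. A Kenig--Merle induction needs its critical quantity to decouple across profiles and to be inherited by the nonlinear profiles, and for a non-conserved, supremum-in-time quantity neither property is available from the linear profile decomposition: to know that each nonlinear profile $v^j$ still satisfies $\sup_t P(v^j(t))[M(v^j)]^{\sigc}$ below the critical level, one needs a Pythagorean expansion of $P$ \emph{along the nonlinear flow, uniformly in time}, which is precisely the paper's key new ingredient (Lemma \ref{lem-pytha-expan}) and is nowhere supplied in your argument. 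The paper avoids inducting on $P$ altogether: it fixes the constraint $A<P(Q)[M(Q)]^{\sigc}$ once and for all, defines $S(A,\delta)$ with $\delta$ a bound on the \emph{conserved} rescaled energy, and inducts on $\deltc(A)=\inf\{\delta: S(A,\delta)=\infty\}$; the constraint involving $A$ is then propagated to the profiles via Lemma \ref{lem-pytha-expan}, while the actual minimality argument uses only conserved quantities. Without this (or an equivalent device), your extraction of a minimal element at level $P_c$, and the claim that it still satisfies \eqref{scat-crite}, does not go through.

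The second gap is your treatment of profiles with $|x_n^j|\to\infty$. You assert that such a profile "is governed by the homogeneous intercritical NLS, whose sub-threshold scattering is known." This is the wrong mechanism, and as stated it fails: the profile $\psi^j$ can be arbitrarily large in $H^1$, so there is no reason it lies below the ground-state threshold of the homogeneous NLS, and no sub-threshold theorem applies. What actually happens (Lemma \ref{lem-non-prof-dive-spac}, following \cite{MMZ,CFGM}) is that after recentering, the coupling $|x+x_n|^{-b}$ tends to zero locally uniformly, so the limiting dynamics is the \emph{linear} Schr\"odinger flow: the nonlinear solution with data $e^{-it_n\Delta}\psi(\cdot-x_n)$ is globally approximated by $e^{i(t-t_n)\Delta}\psi(\cdot-x_n)$ in Strichartz norms, with no size restriction on $\psi$ whatsoever. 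This distinction matters both for correctness and because it is what makes the non-radial argument work for large profiles. (A minor further point: the critical element here is precompact in $H^1$ with no scaling modulation --- the problem is intercritical, so scaling changes mass and energy and cannot appear as a symmetry of the compact orbit.)
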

	
	We note that a scattering condition similar to \eqref{scat-crite} was first introduced by Duyckaerts and Roudenko in \cite[Theorem 3.7]{DR-CMP}, where it was used to show the scattering beyond the ground state threshold for the focusing Schr\"odinger equation. The condition \eqref{scat-crite} was inspired by a recent work of Gao and Wang \cite{GW} (see also \cite{Dinh-DCDS}). 
	
	The proof of Theorem \ref{theo-scat-crite} is based on the concentration/compactness and rigidity method. The main difficulty comes from the fact that the potential energy $P(u(t))$ is not conserved along the time evolution of \eqref{eq:inls}. To overcome the difficulty, we establish a Pythagorean expansion along bounded nonlinear flows. Since we are interested in non-radial solutions, we need to construct nonlinear profiles associated with the linear ones living far away from the origin. The latter was recently showed by Miao, Murphy, and Zheng \cite{MMZ} in three dimensions (see also \cite{CFGM} for dimensions $N\geq 2$). This type of nonlinear profiles is constructed by observing that in the regime $|x| \rightarrow \infty$, the nonlinearity becomes weak, and solutions to \eqref{eq:inls} can be approximated by solutions to the underlying linear Schr\"odinger equation. Thanks to an improved nonlinear estimate (see Lemma \ref{lem-non-est}), we give a refined result with a simple proof of these results (see Lemma \ref{lem-non-prof-dive-spac}). For more details, we refer to Section \ref{sec:scat}.
	
	Our next result is the following blow-up criterion for \eqref{eq:inls}. 
	
	\begin{theorem} [Blow-up criterion] \label{theo-blow-crite}
		Let $N\geq 1$, $0<b<\min \left\{2, N\right\}$, and $\frac{4-2b}{N} < \alpha < \alpha(N)$. Let $u$ be a solution to \eqref{eq:inls} defined on the maximal forward time interval of existence $[0,T^*)$. Assume that
		\begin{align} \label{blow-crite}
		\sup_{t\in [0,T^*)} G(u(t)) \leq -\delta
		\end{align}
		for some $\delta>0$, where
		\begin{align} \label{eq:G}
		G(f):= \|\nabla f\|^2_{L^2} - \frac{N\alpha+2b}{2(\alpha+2)} P(f).
		\end{align}
		Then either $T^*<\infty$, or $T^*=\infty$ and there exists a time sequence $t_n \rightarrow \infty$ such that $\|\nabla u(t_n)\|_{L^2} \rightarrow \infty$ as $n \rightarrow \infty$. Moreover, if we assume in addition that $u$ has finite variance, i.e., $|x|u(t) \in L^2(|x|^2 dx)$ for all $t\in [0,T^*)$, then $T^*<\infty$. A similar statement holds for negative times.
	\end{theorem}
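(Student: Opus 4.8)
The plan is to reduce everything to the virial identity for \eqref{eq:inls}. For a real weight $\varphi$ set the Morawetz action $\Mcal_\varphi(t) := 2\,\ima \int \bar{u}(t)\,\nabla\varphi\cdot\nabla u(t)\,dx$. A direct, if lengthy, computation using the equation and integration by parts gives
\begin{equation*}
\frac{d}{dt}\Mcal_\varphi(t) = 4\sum_{j,k}\rea\int \partial_j\partial_k\varphi\,\partial_j\bar{u}\,\partial_k u\,dx - \int \Delta^2\varphi\,|u|^2\,dx - \frac{2\alpha}{\alpha+2}\int |x|^{-b}\Delta\varphi\,|u|^{\alpha+2}\,dx + \frac{4}{\alpha+2}\int \nabla(|x|^{-b})\cdot\nabla\varphi\,|u|^{\alpha+2}\,dx .
\end{equation*}
When $\varphi(x)=|x|^2$ this collapses to the clean identity $\frac{d}{dt}\Mcal_{|x|^2}(t)=8\,G(u(t))$, i.e. $\frac{d^2}{dt^2}\int|x|^2|u(t)|^2\,dx = 8\,G(u(t))$, which is the engine of the whole proof.

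I would treat the finite-variance statement first, as it is the classical convexity argument. If $xu(t)\in L^2$ for all $t$ and $V(t):=\int|x|^2|u(t)|^2\,dx$, then $V''(t)=8G(u(t))\le -8\delta$ on $[0,T^*)$. Integrating twice gives $V(t)\le V(0)+V'(0)t-4\delta t^2$, whose right-hand side becomes negative in finite time; since $V\ge 0$, this forces $T^*<\infty$. (One must also invoke the standard persistence of finite variance and justify the pointwise-in-time identity, but this is routine once $xu(t)\in L^2$ is granted.)

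For the general dichotomy I would argue by contraposition: assume $T^*=\infty$ and, contrary to the desired conclusion, that $K:=\sup_{t\ge 0}\|\nabla u(t)\|_{L^2}<\infty$, and derive a contradiction. Fix a smooth radial cutoff $\varphi$ with $\varphi(x)=|x|^2$ for $|x|\le 1$, $\varphi$ constant for $|x|\ge 2$, and --- crucially --- with $r\mapsto \varphi'(r)/r$ non-increasing, so that $\partial_j\partial_k\varphi\le 2\delta_{jk}$ as quadratic forms; set $\varphi_R(x):=R^2\varphi(x/R)$. Using the radial decomposition $\partial_j\partial_k\varphi=\frac{\varphi'}{r}\left(\delta_{jk}-\frac{x_jx_k}{r^2}\right)+\varphi''\frac{x_jx_k}{r^2}$, this monotonicity makes both kinetic corrections nonpositive, whence $4\sum_{j,k}\rea\int\partial_j\partial_k\varphi_R\,\partial_j\bar{u}\,\partial_k u \le 8\|\nabla u\|_{L^2}^2$. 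The remaining errors are all supported in $\{|x|>R\}$: the bi-Laplacian term is $O(R^{-2})M(u_0)$, while the two potential errors are controlled via $|x|^{-b}\le R^{-b}$ and $|\nabla(|x|^{-b})|\le b R^{-b-1}$ there, together with the Gagliardo--Nirenberg bound $\int|u|^{\alpha+2}\,dx\le C\|\nabla u\|_{L^2}^{N\alpha/2}\|u\|_{L^2}^{\alpha+2-N\alpha/2}$, which is finite in the intercritical range and bounded by $C(K,M(u_0))$. Hence
\begin{equation*}
\frac{d}{dt}\Mcal_{\varphi_R}(t)\le 8\,G(u(t))+C\big(R^{-2}+R^{-b}\big)\le -8\delta+C\big(R^{-2}+R^{-b}\big)\qquad (t\ge 0).
\end{equation*}
Choosing $R$ so large that $C(R^{-2}+R^{-b})\le 4\delta$ yields $\frac{d}{dt}\Mcal_{\varphi_R}(t)\le -4\delta$ for all $t\ge0$, hence $\Mcal_{\varphi_R}(t)\le \Mcal_{\varphi_R}(0)-4\delta t\to-\infty$. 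This contradicts the a priori bound $|\Mcal_{\varphi_R}(t)|\le 2\|\nabla\varphi_R\|_{L^\infty}\|u(t)\|_{L^2}\|\nabla u(t)\|_{L^2}\le C R\,M(u_0)^{1/2}K$, finite for fixed $R$. Therefore $K=\infty$, i.e. $\limsup_{t\to\infty}\|\nabla u(t)\|_{L^2}=\infty$, giving the second alternative. The negative-time statements follow from the symmetry $u(t,x)\mapsto \overline{u(-t,x)}$.

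The main obstacle I anticipate is making the localized virial inequality hold uniformly in $t$ using only the boundedness of $\|\nabla u(t)\|_{L^2}$, and not any genuine smallness of the exterior mass or gradient. The delicate point is the exterior kinetic contribution $\int_{|x|>R}|\nabla u|^2$, which is bounded by $K^2$ but not a priori small; the device that rescues the argument is the monotonicity $\varphi'(r)/r$ non-increasing, which renders that contribution harmlessly nonpositive. It is worth emphasizing that, in sharp contrast with the scattering theorem, non-radiality costs nothing here: the smallness of the exterior potential error comes for free from the genuine decay $|x|^{-b}\le R^{-b}$ of the weight, so no radial Sobolev embedding is required. The only remaining care is the usual regularization needed to license the virial computation for merely $H^1$ data.
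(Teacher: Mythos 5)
Your proposal is correct and follows essentially the same route as the paper's proof: a localized virial estimate of the form $V''_{\varphi_R}(t)\le 8G(u(t))+CR^{-2}+CR^{-b}$, obtained by making the kinetic corrections nonpositive through the Hessian bound on $\varphi_R$ and controlling the potential errors by the decay $|x|^{-b}\le R^{-b}$ outside $B_R$ together with the assumed uniform $H^1$ bound (so no radial symmetry is needed), then a contradiction by integration under \eqref{blow-crite}, with Glassey's convexity argument handling the finite-variance case. The only cosmetic differences are that you enforce the Hessian bound via monotonicity of $\varphi'(r)/r$ where the paper uses $\chi''\le 2$ plus Cauchy--Schwarz, and you contradict the a priori bound on the Morawetz action $V'_{\varphi_R}$ after one integration where the paper integrates twice and contradicts $V_{\varphi_R}\ge 0$.
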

	The proof of this blow-up result is based on a contradiction argument using localized virial estimates for general (non-radial and infinite variance) solutions to \eqref{eq:inls} (see Lemma \ref{prop-viri-est}). We also take the advantage of the decay of the nonlinear term outside a large ball. It is conjectured that if a general (not finite variance or radially symmetric) solution to \eqref{eq:inls} satisfy \eqref{blow-crite}, then it blows up in finite time. However, there is no affirmative answer for this conjecture up to date even for the classical nonlinear Schr\"odinger equation.
	
	A first application of Theorems \ref{theo-scat-crite} and \ref{theo-blow-crite} is the following long time dynamics below the ground state threshold.
	\begin{theorem} [Dynamics below the ground state threshold] \label{theo-dyna-below}
		Let $N\geq 1$, $0<b<\min \left\{2, N\right\}$, and $\frac{4-2b}{N} < \alpha < \alpha(N)$. Let $u_0\in H^1$ satisfy \eqref{eq:ener-below}.
		
		\noindent (1) If $u_0$ satisfies \eqref{eq:grad-glob-below}, then the corresponding solution to \eqref{eq:inls} satisfies
		\begin{align} \label{eq:est-solu-glob-below}
		\sup_{t\in (-T_*,T^*)} P(u(t)) [M(u(t))]^{\sigc} < P(Q) [M(Q)]^{\sigc}.
		\end{align}
		In particular, the solution exists globally in time. Moreover, if we assume in addition that $N\geq 2$ and $0<b<\min \left\{2,\frac{N}{2}\right\}$, then the corresponding solution scatters in $H^1$ in both directions.
		
		\noindent (2) If $u_0$ satisfies \eqref{eq:grad-blow-below}, then the corresponding solution to \eqref{eq:inls} satisfies 
		\begin{align}\label{eq:est-solu-blow-below}
		\sup_{t\in (-T_*,T^*)} G(u(t)) \leq -\delta
		\end{align}
		for some $\delta>0$. In particular, the solution either blows up in finite time, or there exists a time sequence $(t_n)_{n\geq 1}$ satisfying $|t_n| \rightarrow \infty$ such that $\|\nabla u(t_n)\|_{L^2} \rightarrow \infty$ as $n\rightarrow \infty$. Moreover, if we assume in addition that 
		\begin{itemize}[leftmargin=5mm]
			\item $u_0$ has finite variance, 
			\item or $N\geq 2$, $\alpha \leq 4$, and $u_0$ is radially symmetric,
			\item or $N\geq 3$, $\alpha \leq 2$, and $u_0 \in \Sigma_{N}$, where
			\begin{align} \label{eq:Sigma-N}
			\Sigma_N:= \left\{f \in H^1 \ : \ f(y,x_N) = f(|y|,x_N), ~ x_N f \in L^2\right\}
			\end{align}
			with $x=(y,x_N)$, $y=(x_1, \cdots, x_{N-1})\in \R^{N-1}$, and $x_N \in \R$,
		\end{itemize}
		then the corresponding solution blows up in finite time, i.e., $T_*,T^*<\infty$.
	\end{theorem}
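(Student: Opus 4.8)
The plan is to reduce both parts to the already-established criteria (Theorems \ref{theo-scat-crite} and \ref{theo-blow-crite}) through a variational trapping argument built on the sharp Gagliardo--Nirenberg inequality adapted to the weighted functional \eqref{eq:P}. First I would record the sharp inequality $P(f) \le C_{\mathrm{GN}} \|\nabla f\|_{L^2}^{\theta}\|f\|_{L^2}^{\mu}$ with $\theta = \frac{N\alpha+2b}{2}$ and $\mu = \frac{4-2b-(N-2)\alpha}{2}$, whose optimizer is the ground state $Q$ of \eqref{eq:ground} (this is due to Farah \cite{Farah}). Testing \eqref{eq:ground} against $Q$ and against $x\cdot\nabla Q$ yields the Pohozaev identities $\|\nabla Q\|_{L^2}^2+\|Q\|_{L^2}^2 = P(Q)$ and $\|\nabla Q\|_{L^2}^2 = \frac{N\alpha+2b}{2(\alpha+2)}P(Q)$; in particular $G(Q)=0$ for $G$ as in \eqref{eq:G}, and $E(Q)=\frac{N\alpha+2b-4}{4(\alpha+2)}P(Q)>0$. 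Since $\alpha>\frac{4-2b}{N}$ forces $\theta>2$, and a direct check against \eqref{eq:sigc} gives $\mu=(\theta-2)\sigc$, after multiplying by $[M(u)]^{\sigc}$ every quantity below is invariant under the scaling \eqref{eq:scaling}. Setting $X(t):=\|\nabla u(t)\|_{L^2}\|u(t)\|_{L^2}^{\sigc}$ (conserved mass makes this a harmless rescaling of $\|\nabla u(t)\|_{L^2}$) and $X_*:=\|\nabla Q\|_{L^2}\|Q\|_{L^2}^{\sigc}$, the conserved scale-invariant energy reads $E(u_0)[M(u_0)]^{\sigc}=\tfrac12 X(t)^2 - \tfrac1{\alpha+2}P(u(t))[M(u(t))]^{\sigc}\ge f(X(t))$, where $f(y):=\tfrac12 y^2 - \tfrac{C_{\mathrm{GN}}}{\alpha+2}y^{\theta}$ increases on $[0,X_*]$ and decreases afterward. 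The ground-state identities are exactly what force $f(X_*)=E(Q)[M(Q)]^{\sigc}$ and $C_{\mathrm{GN}}X_*^{\theta}=P(Q)[M(Q)]^{\sigc}$; this single computation is what ties the gradient threshold of \cite{Farah} to the $P$- and $G$-thresholds of Theorems \ref{theo-scat-crite}--\ref{theo-blow-crite}.

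For part (1), assumption \eqref{eq:ener-below} reads $E(u_0)[M(u_0)]^{\sigc}<f(X_*)$, so there is $X_0<X_*$ with $f(X_0)=E(u_0)[M(u_0)]^{\sigc}$ on the increasing branch. Since $f(X(t))\le f(X_0)$ for all $t$, $f$ is strictly increasing on $[0,X_*]$, and $X(0)<X_*$ by \eqref{eq:grad-glob-below}, continuity of $t\mapsto X(t)$ (from the local theory) forbids crossing $X_*$ and pins $X(t)\le X_0<X_*$ on the whole interval of existence. Feeding this back into the sharp inequality yields $\sup_t P(u(t))[M(u(t))]^{\sigc}\le C_{\mathrm{GN}}X_0^{\theta}<C_{\mathrm{GN}}X_*^{\theta}=P(Q)[M(Q)]^{\sigc}$, which is exactly \eqref{eq:est-solu-glob-below}. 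Uniform boundedness of $\|\nabla u\|_{L^2}$ gives global existence, and \eqref{eq:est-solu-glob-below} is precisely hypothesis \eqref{scat-crite} of Theorem \ref{theo-scat-crite}, so scattering in both directions follows under the stated restriction $N\ge2$, $0<b<\min\{2,\tfrac{N}{2}\}$.

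For part (2), \eqref{eq:grad-blow-below} gives $X(0)>X_*$, and the same trapping (now $f$ decreasing past $X_*$, with $f(X(t))\le E(u_0)[M(u_0)]^{\sigc}<f(X_*)$) keeps $X(t)>X_*$ for all $t$. Eliminating $P$ between \eqref{eq:G} and the energy, and using $G(Q)=0$ to substitute $\tfrac{N\alpha+2b-4}{4}X_*^2=\tfrac{N\alpha+2b}{2}E(Q)[M(Q)]^{\sigc}$, gives $G(u(t))[M(u(t))]^{\sigc}=\tfrac{N\alpha+2b}{2}\big(E(u_0)[M(u_0)]^{\sigc}-E(Q)[M(Q)]^{\sigc}\big)-\tfrac{N\alpha+2b-4}{4}\big(X(t)^2-X_*^2\big)$. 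By \eqref{eq:ener-below} the first term is a fixed negative number and by the trapping the second is $\le0$; hence $\sup_t G(u(t))\le-\delta$ for an explicit $\delta>0$, which is \eqref{eq:est-solu-blow-below} and hypothesis \eqref{blow-crite} of Theorem \ref{theo-blow-crite}. This immediately yields the blow-up/grow-up dichotomy, and the finite-variance rigidity of Theorem \ref{theo-blow-crite} settles the first bullet.

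The remaining two bullets (radial data with $\alpha\le4$; cylindrical data $\Sigma_N$ with $\alpha\le2$) lie outside the finite-variance statement of Theorem \ref{theo-blow-crite} and must be closed with the localized virial machinery of Lemma \ref{prop-viri-est}: for radial data one controls the nonlinear error on $|x|\gtrsim R$ via the radial Sobolev embedding, while in the cylindrical case one localizes only in the $y$-variable and exploits $x_N f\in L^2$ in the $x_N$-direction, the $\alpha$-restrictions being exactly what is needed to absorb the virial error terms against the uniform bound $G(u(t))\le-\delta$. I expect this symmetry-adapted localized virial analysis to be the main obstacle; the variational trapping itself is routine once the ground-state identities are in hand.
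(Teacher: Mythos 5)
Your variational trapping argument is correct and is essentially the paper's own proof of the first two assertions: your function $f$, the identification of its maximum at $X_*=\|\nabla Q\|_{L^2}\|Q\|_{L^2}^{\sigc}$ with value $E(Q)[M(Q)]^{\sigc}$, the continuity argument pinning $X(t)\le X_0<X_*$ (resp.\ $X(t)>X_*$), and the resulting bounds \eqref{eq:est-solu-glob-below} and \eqref{eq:est-solu-blow-below} with explicit $\delta$ are exactly the paper's $F(\lambda)$, claim \eqref{eq:claim-below-1}, and the $G$--$E$ elimination identity; the reduction of the finite-variance bullet to Theorem \ref{theo-blow-crite} is also the same.

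The gap is in the last two bullets, which you defer to a ``symmetry-adapted localized virial analysis'' without carrying it out. For the cylindrical case this analysis is precisely the new content of the theorem, and the paper spends the bulk of its proof there: it takes $\varphi_R(x)=\psi_R(y)+x_N^2$ (truncation only in the $y$-variable), derives $V''_{\varphi_R}(t)\le 8G(u(t))+CR^{-2}+CR^{-b}\int_{|y|\ge R}|u(t,x)|^{\alpha+2}\,dx$, and controls the last integral by the Strauss embedding in $\R^{N-1}$ together with the one-dimensional bound $\sup_{x_N}\|u(t,\cdot,x_N)\|_{L^2_y}^2\lesssim \|u(t)\|_{L^2}\|\partial_N u(t)\|_{L^2}$, splitting the cases $\alpha=2$ and $\alpha<2$ (Young's inequality), to arrive at \eqref{eq:est-V-psi-R}. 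More importantly, the closing step you propose --- absorbing the virial errors ``against the uniform bound $G(u(t))\le-\delta$'' --- does not work as stated: the error terms are of size $R^{-\beta}\|\nabla u(t)\|_{L^2}^2$, and $\|\nabla u(t)\|_{L^2}$ is not a priori bounded in the blow-up regime (its unboundedness is what one is trying to establish), so no fixed choice of large $R$ makes these errors small relative to a constant. What is needed is the strengthened coercivity $8G(u(t))+\vareps\|\nabla u(t)\|^2_{L^2}\le-\delta$ of \eqref{eq:est-vareps}, which the paper quotes from \cite{Dinh-NA}. Ironically, your own identity in part (2) yields it: for $0<\vareps<2(N\alpha+2b-4)$ the quantity $8G(u(t))[M(u(t))]^{\sigc}+\vareps X(t)^2=4(N\alpha+2b)\bigl(E(u_0)[M(u_0)]^{\sigc}-E(Q)[M(Q)]^{\sigc}\bigr)-\bigl(2(N\alpha+2b-4)-\vareps\bigr)X(t)^2+2(N\alpha+2b-4)X_*^2$ is decreasing in $X(t)^2$, hence bounded on $\{X>X_*\}$ by $4(N\alpha+2b)\bigl(E(u_0)[M(u_0)]^{\sigc}-E(Q)[M(Q)]^{\sigc}\bigr)+\vareps X_*^2$, which is negative for $\vareps$ small. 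But the proposal neither states nor uses this refinement, and without it the plan for the radial and cylindrical bullets fails.
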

	
	For the scattering part, Theorem \ref{theo-dyna-below} provides an alternative proof of a recent result of Cardoso, Farah, Guzm\'an, and Murphy \cite{CFGM}. For the blow-up part, Theorem \ref{theo-dyna-below} extends earlier results of \cite{Farah} (for finite variance data) and the first author \cite{Dinh-NA} (for radial data) to the case of cylindrically symmetric data. Note that the first work addressed the finite time blow-up for NLS with cylindrically symmetric data is due to Martel \cite{Martel}, where the blow-up was shown for data with negative energy. Recently, Bellazzini and Forcella \cite{BF-arXiv} extended Martel's result to the case of focusing cubic NLS for data with non-negative energy data lying below the ground state threshold. Our result not only extends the ones of \cite{Martel, BF-arXiv} to the focusing inhomogeneous NLS but also provides an alternative simple proof for these results. In particular, our choice of cutoff function is simpler than that in \cite{Martel, BF-arXiv}. Our argument is robust and can be applied to show the existence of finite time blow-up solutions with cylindrically symmetric data for other Schr\"odinger-type equations.  
	
	Another application of Thereorems \eqref{theo-scat-crite} and \eqref{theo-blow-crite} is the following long time dyanmics at the ground state threshold.
	
	\begin{theorem} [Dynamics at the ground state] \label{theo-dyna-at}
		Let $N\geq 1$, $0<b<\min \left\{2, N\right\}$, and $\frac{4-2b}{N} < \alpha < \alpha(N)$. Let $u_0\in H^1$ be such that
		\begin{align} \label{eq:ener-at}
		E(u_0) [M(u_0)]^{\sigc} = E(Q)[M(Q)]^{\sigc}.
		\end{align}
		
		\noindent (1) If 
		\begin{align} \label{eq:grad-at-1}
		\|\nabla u_0\|_{L^2}\|u_0\|_{L^2}^{\sigc} < \|\nabla Q\|_{L^2}\|Q\|_{L^2}^{\sigc},
		\end{align}
		then the corresponding solution to \eqref{eq:inls} exists globally in time. Moreover, the solution either satisfies
		\begin{align} \label{eq:est-solu-at}
		\sup_{t\in \R} P(u(t)) [M(u(t))]^{\sigc} < P(Q)[M(Q)]^{\sigc}
		\end{align}
		or there exists a time sequence $(t_n)_{n\geq 1}$ satisfying $|t_n| \rightarrow \infty$ such that 
		\begin{align} \label{eq:conver-u-tn}
		u(t_n) \rightarrow e^{i\theta} Q \quad \text{ strongly in } H^1
		\end{align}
		for some $\theta \in \R$ as $n\rightarrow \infty$. In particular, if we we assume in addition that $N\geq 2$ and $0<b<\min \left\{2, \frac{N}{2}\right\}$, then the solution either scatters in $H^1$ forward in time, or there exist a time sequence $t_n \rightarrow \infty$ and a sequence $(x_n)_{n\geq 1} \subset \R^N$ such that \eqref{eq:conver-u-tn} holds.
		
		\noindent (2) If 
		\begin{align} \label{eq:grad-at-2}
		\|\nabla u_0\|_{L^2}\|u_0\|_{L^2}^{\sigc} = \|\nabla Q\|_{L^2}\|Q\|_{L^2}^{\sigc},
		\end{align}
		then $u(t,x)=e^{it} e^{i\theta} Q(x)$ for some $\theta \in \R$.
		
		\noindent (3) If
		\begin{align} \label{eq:grad-at-3}
		\|\nabla u_0\|_{L^2}\|u_0\|_{L^2}^{\sigc} > \|\nabla Q\|_{L^2}\|Q\|_{L^2}^{\sigc},
		\end{align}
		then the corresponding solution to \eqref{eq:inls} 
		\begin{itemize} [leftmargin=5mm]
			\item[i.] either blows up forward in time, i.e., $T^*<\infty$, 
			\item[ii.] or there exists a time sequence $t_n \rightarrow \infty$ such that $\|\nabla u(t_n)\|_{L^2} \rightarrow \infty$ as $n\rightarrow \infty$,
			\item[iii.] or there exists a time sequence $t_n\rightarrow \infty$ such that \eqref{eq:conver-u-tn} holds.
		\end{itemize}
		Moreover, if we assume in addition that 
		\begin{itemize} [leftmargin=5mm]
			\item $u_0$ has finite variance, 
			\item or $N\geq 2$, $\alpha\leq 4$, and $u_0$ is radially symmetric,
			\item or $N\geq 3$, $\alpha \leq 2$, and $u_0 \in \Sigma_N$,
		\end{itemize}
		then the possibility in Item ii. can be excluded.
	\end{theorem}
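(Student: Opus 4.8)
The plan is to convert each of the three regimes into the hypotheses of Theorems~\ref{theo-scat-crite} and~\ref{theo-blow-crite}, using only the variational structure of the ground state together with a compactness argument that extracts $Q$ as a limit whenever the governing scale-invariant quantity reaches its critical value. Throughout I abbreviate $\mathcal{K}(f):=\|\nabla f\|_{L^2}^2[M(f)]^{\sigc}$ and $\mathcal{P}(f):=P(f)[M(f)]^{\sigc}$, both of which are invariant under the scaling~\eqref{eq:scaling}.

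First I would assemble the variational facts attached to $Q$. The sharp Gagliardo--Nirenberg inequality, in its scale-invariant form $\mathcal{P}(f)\le C_{\mathrm{GN}}[\mathcal{K}(f)]^{\frac{N\alpha+2b}{4}}$ with equality precisely at the optimizers $e^{i\theta}\mu Q(\lambda\,\cdot)$, together with the Pohozaev identity $G(Q)=0$ (equivalently $\mathcal{K}(Q)=\tfrac{N\alpha+2b}{2(\alpha+2)}\mathcal{P}(Q)$), is the only input I need. Inserting the threshold hypothesis~\eqref{eq:ener-at}, namely $\tfrac12\mathcal{K}(u)-\tfrac{1}{\alpha+2}\mathcal{P}(u)=E(Q)[M(Q)]^{\sigc}$, into these identities yields the two linear relations, valid along the whole flow,
\begin{align*}
\mathcal{P}(u(t))-\mathcal{P}(Q) &=\tfrac{\alpha+2}{2}\big(\mathcal{K}(u(t))-\mathcal{K}(Q)\big), \\
G(u(t))[M(u(t))]^{\sigc} &=-\tfrac{N\alpha+2b-4}{4}\big(\mathcal{K}(u(t))-\mathcal{K}(Q)\big).
\end{align*}
Since $N\alpha+2b>4$ in the intercritical range, the entire dynamics is governed by the sign of $\mathcal{K}(u(t))-\mathcal{K}(Q)$. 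I would also record that, restricted to optimizers, the critical energy level $E(Q)[M(Q)]^{\sigc}$ singles out exactly the scalings~\eqref{eq:scaling} of $Q$ (all of which are solitary profiles), a consequence of $\mathcal{K}(Q)$ being the unique maximum of $k\mapsto\tfrac12 k-\tfrac{C_{\mathrm{GN}}}{\alpha+2}k^{\frac{N\alpha+2b}{4}}$, which is in turn equivalent to $G(Q)=0$.

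Next I would establish the trapping: the continuous function $t\mapsto\mathcal{K}(u(t))$ cannot cross the level $\mathcal{K}(Q)$. Indeed, at a crossing time $t_1$ the first relation forces $\mathcal{P}(u(t_1))=\mathcal{P}(Q)$, i.e.\ equality in Gagliardo--Nirenberg, so $u(t_1)$ is an optimizer of critical energy, hence a scaling of $Q$; then $u$ is the corresponding solitary wave, whose $\mathcal{K}$ is constant in time, contradicting a strict initial inequality. This settles case~(2) at once: under~\eqref{eq:grad-at-2} the datum $u_0$ is already such an optimizer of critical energy, hence a scaling of $Q$, and $u$ is the associated solitary wave, which in the normalization is $u(t,x)=e^{it}e^{i\theta}Q(x)$. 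In case~(1), \eqref{eq:grad-at-1} keeps $\mathcal{K}(u(t))<\mathcal{K}(Q)$ for all $t$, whence $\|\nabla u(t)\|_{L^2}$ is bounded and the solution is global, and the first relation gives $\mathcal{P}(u(t))<\mathcal{P}(Q)$ pointwise. A dichotomy results: either $\sup_t\mathcal{P}(u(t))<\mathcal{P}(Q)$, so~\eqref{scat-crite} holds and Theorem~\ref{theo-scat-crite} provides global existence and, under the extra restriction on $N,b$, scattering; or $\sup_t\mathcal{P}(u(t))=\mathcal{P}(Q)$ is attained along some $t_n$, and then $\mathcal{K}(u(t_n))\to\mathcal{K}(Q)$ makes $(u(t_n))$ a minimizing sequence for Gagliardo--Nirenberg at the fixed energy. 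Case~(3) is the mirror image: \eqref{eq:grad-at-3} keeps $\mathcal{K}(u(t))>\mathcal{K}(Q)$, hence $G(u(t))<0$, as long as $u$ exists; if $\sup_tG(u(t))\le-\delta$ then Theorem~\ref{theo-blow-crite} yields item~i or item~ii, while otherwise $G(u(t_n))\to0^-$ along some $t_n$ and $(u(t_n))$ is again a minimizing sequence, leading to item~iii.

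The crux, and the step I expect to be the main obstacle, is to upgrade such a minimizing sequence to the strong convergence $u(t_n)\to e^{i\theta}Q$ of~\eqref{eq:conver-u-tn}. For this I would run a profile decomposition / concentration--compactness analysis of $(u(t_n))$: the weight $|x|^{-b}$ breaks translation invariance, which complicates the linear decomposition but simultaneously forces the concentration to occur at the origin, so that no mass escapes to spatial infinity and a single profile, equal to $e^{i\theta}Q$, survives (in the scattering regime of case~(1) the linear profiles may still translate, which accounts for the auxiliary sequence $(x_n)$ in the statement). Granting this, the remaining bookkeeping is routine: in case~(3), under the finite-variance, radial ($N\ge2$, $\alpha\le4$), or cylindrical ($N\ge3$, $\alpha\le2$) hypotheses, the refined conclusion of Theorem~\ref{theo-blow-crite} upgrades the branch $\sup_tG(u(t))\le-\delta$ to finite-time blow-up, thereby excluding item~ii. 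The two genuine difficulties are therefore the no-crossing rigidity exactly at the threshold---where the energy inequality degenerates to an equality and offers no quantitative coercivity, so one must invoke the equality case of Gagliardo--Nirenberg rather than a gain---and the variational compactness producing the ground-state limit; once these are in place, every scattering and blow-up conclusion is an immediate application of the two criteria already proved.
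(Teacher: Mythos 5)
Your skeleton --- the scale-invariant reformulation, the no-crossing/trapping argument via the equality case of Gagliardo--Nirenberg, the dichotomy in each regime, and the reduction to Theorems \ref{theo-scat-crite} and \ref{theo-blow-crite} --- is exactly the paper's architecture, and your two linear identities relating $\mathcal{P}(u(t))-\mathcal{P}(Q)$ and $G(u(t))[M(u(t))]^{\sigc}$ to $\mathcal{K}(u(t))-\mathcal{K}(Q)$ are correct (the paper phrases the same facts after normalizing $M(u_0)=M(Q)$, $E(u_0)=E(Q)$ by the scaling \eqref{eq:scaling}). But the two steps you defer are not equally benign. For the compactness crux you propose a profile decomposition; the paper does something much lighter (Lemmas \ref{lem-weak} and \ref{lem-comp-mini-GN}): a bounded sequence $f_n$ with $M(f_n)=M(Q)$, $E(f_n)=E(Q)$, $\|\nabla f_n\|_{L^2}\to\|\nabla Q\|_{L^2}$ has a weak $H^1$ limit $f$ along which $P(f_n)\to P(f)$ --- weak continuity of $P$ comes for free because the region $\{|x|\ge R\}$ contributes $O(R^{-b})$ and Rellich compactness handles $\{|x|\le R\}$ --- so $f$ is a nonzero optimizer of \eqref{eq:GN-ineq}; equality throughout the Gagliardo--Nirenberg chain then forces norm convergence, hence strong $H^1$ convergence, and the Euler--Lagrange equation together with the uniqueness results of \cite{Yanagida, Genoud-2d, Toland} identifies $f=e^{i\theta}Q$. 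Your route could probably be made to work, but you have left unproven precisely the step you yourself call the main obstacle, and a full concentration--compactness analysis is unnecessary here.

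The second deferral is a genuine error. You claim that in case (3) the branch $\sup_t G(u(t))\le-\delta$ is upgraded to finite-time blow-up by ``the refined conclusion of Theorem \ref{theo-blow-crite}'' for finite-variance, radial, and cylindrical data alike. Theorem \ref{theo-blow-crite} contains such a refinement \emph{only} for finite-variance data; it asserts nothing stronger for radial or cylindrically symmetric solutions. The obstruction is quantitative: the localized virial estimates available in those geometries (Proposition \ref{prop-viri-est}, the radial estimate of \cite{Dinh-NA}, and \eqref{eq:est-V-psi-R}) carry error terms of the form $CR^{-\beta}\|\nabla u(t)\|_{L^2}^2$, and in the regime you are trying to exclude the gradient may diverge along a sequence, so these errors cannot be absorbed by the constant $-\delta$ no matter how large $R$ is. What is actually needed --- and what the paper proves as \eqref{eq:est-G-vareps}, using the trapping $\|\nabla u(t)\|_{L^2}\ge(1+\rho)\|\nabla Q\|_{L^2}$ together with the conservation laws and \eqref{eq:prop-Q} --- is the strengthened coercivity
\begin{align*}
8G(u(t))+\vareps\|\nabla u(t)\|_{L^2}^2\le-\delta \quad \text{for all } t\in[0,\infty)
\end{align*}
for some small $\vareps>0$, which dominates the gradient-weighted error terms once $R$ is chosen large. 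Without this intermediate estimate, the exclusion of item ii in the radial and cylindrical cases does not follow from anything you have cited.
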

	
	To our knowledge, Theorem \ref{theo-dyna-at} is the first result addressing long time dynamics of solutions to \eqref{eq:inls} with data lying at the ground state threshold. For the classical NLS, dynamics at the ground state threshold was first studied by Duyckaerts and Roudenko \cite{DR} for the 3D focusing cubic NLS. The proof in \cite{DR} relies on delicate spectral estimates which make it difficult to extend to higher dimensions. Recently, the first author in \cite{Dinh-DCDS} gave a simple approach to study the dynamics at the threshold for the focusing NLS in any dimensions. Our result is an extension of the one in \cite{Dinh-DCDS} to the focusing inhomogeneous NLS. The proof of Theorem \ref{theo-dyna-at} relies on the scattering and blow-up criteria given in Theorems \ref{theo-scat-crite}, \ref{theo-blow-crite}, and the compactness property of optimizing sequence for the Gagliardo-Nirenberg inequality \eqref{eq:GN-ineq} (see Lemma \ref{lem-comp-mini-GN}). We refer the reader to Section \ref{sec:dynamic} for more details.
	
	Finally, we study long time dynamics above the ground state threshold. Before stating our result, we introduce the virial quantity
	\begin{align} \label{eq:V}
	V(t):= \int |x|^2 |u(t,x)|^2 dx.
	\end{align}
	If $V(0)<\infty$, then $V(t)<\infty$ for all $t$ in the existence time. Moreover, the following identities hold
	\begin{align} \label{eq:prop-V}
	\begin{aligned}
	V'(t) &= 4 \ima \int \overline{u}(t,x) x \cdot \nabla u(t,x) dx, \\
	V''(t) &= 8 \|\nabla u(t)\|^2_{L^2} - \frac{4(N\alpha+2b)}{\alpha+2} P(u(t)).
	\end{aligned}
	\end{align}
	
	\begin{theorem}[Dynamics above the ground state] \label{theo-dyna-above}
		Let $N\geq 1$, $0<b<\min \left\{2, N\right\}$, and $\frac{4-2b}{N} < \alpha < \alpha(N)$. Let $u_0\in \Sigma$ satisfy 
		\begin{align} 
		E(u_0) [M(u_0)]^{\sigc} &\geq E(Q) [M(Q)]^{\sigc}, \label{eq:ener-above-1} \\
		\frac{E(u_0)[M(u_0)]^{\sigc}}{E(Q)[M(Q)]^{\sigc}} &\left(1-\frac{(V'(0))^2}{32 E(u_0) V(0)}\right) \leq 1. \label{eq:ener-above-2}
		\end{align} 
		
		\noindent (1) If 
		\begin{align} 
		P(u_0) [M(u_0)]^{\sigc} &< P(Q)[M(Q)]^{\sigc}, \label{eq:gwp-above-1} \\
		V'(0) &\geq 0, \label{eq:gwp-above-2}
		\end{align}
		then the corresponding solution to \eqref{eq:inls} satisfies \eqref{scat-crite}. In particular, if $N\geq 2$ and $0<b<\min \left\{2,\frac{N}{2}\right\}$, then the solution exists globally in time and scatters in $H^1$ in the sense of \eqref{defi-scat}.
		
		\noindent (2) If 
		\begin{align} 
		P(u_0) [M(u_0)]^{\sigc} &> P(Q) [M(Q)]^{\sigc}, \label{eq:blow-above-1} \\
		V'(0)& \leq 0, \label{eq:blow-above-2}
		\end{align}
		then the corresponding solution to \eqref{eq:inls} blows up forward in time, i.e., $T^*<\infty$.
	\end{theorem}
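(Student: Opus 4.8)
The plan is to reduce both statements to the behaviour of the scale-invariant potential energy $\mathcal{P}(t):=P(u(t))[M(u(t))]^{\sigc}$ relative to $\mathcal{P}_Q:=P(Q)[M(Q)]^{\sigc}$, and then to invoke the criteria already established. Write $m:=M(u_0)$, $\mathcal{ME}:=E(u_0)m^{\sigc}$ (conserved by the mass and energy laws), and $\mathcal{ME}_Q:=E(Q)[M(Q)]^{\sigc}$; recall that \eqref{eq:ener-above-1} reads $\mathcal{ME}\geq\mathcal{ME}_Q$, which (since $m^{\sigc}>0$) forces $E(u_0)>0$. Eliminating $\|\nabla u\|_{L^2}^2$ through the energy identity gives the pointwise relation
\begin{align*}
G(u(t))=2E(u_0)-\frac{N\alpha+2b-4}{2(\alpha+2)}\,P(u(t)),
\end{align*}
while the Pohozaev identity $G(Q)=0$ yields $\mathcal{ME}_Q=\frac{N\alpha+2b-4}{4(\alpha+2)}\mathcal{P}_Q$. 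Consequently $G(u(t))<0$ is equivalent to $\mathcal{P}(t)>\frac{\mathcal{ME}}{\mathcal{ME}_Q}\mathcal{P}_Q$, a threshold lying strictly above $\mathcal{P}_Q$ when $\mathcal{ME}>\mathcal{ME}_Q$. Thus for Part (1) it suffices to prove $\sup_t\mathcal{P}(t)<\mathcal{P}_Q$, which is exactly \eqref{scat-crite}, so that Theorem \ref{theo-scat-crite} gives global existence and, under $N\geq2$, $0<b<\min\{2,N/2\}$, scattering; for Part (2) it suffices to prove $\sup_tG(u(t))\leq-\delta$, after which Theorem \ref{theo-blow-crite} together with $u_0\in\Sigma$ forces $T^*<\infty$.

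Next I would reinterpret \eqref{eq:ener-above-2} as an effective below-threshold condition. Using $\mathcal{ME}/(32E(u_0)V(0))=m^{\sigc}/(32V(0))$, a direct rearrangement shows that \eqref{eq:ener-above-2} is equivalent to $\widetilde{\mathcal{ME}}(0)\leq\mathcal{ME}_Q$, where the virial-modulated mass-energy is
\begin{align*}
\widetilde{\mathcal{ME}}(t):=\mathcal{ME}-\frac{m^{\sigc}\,(V'(t))^2}{32\,V(t)}.
\end{align*}
The engine propagating this condition is the auxiliary functional $\mathcal{A}(t):=(V'(t))^2-32\,E(u_0)\,V(t)$. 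Differentiating and using \eqref{eq:prop-V} in the form $V''(t)=8G(u(t))=16E(u_0)-\frac{4(N\alpha+2b-4)}{\alpha+2}P(u(t))$ gives
\begin{align*}
\mathcal{A}'(t)=-\frac{8(N\alpha+2b-4)}{\alpha+2}\,P(u(t))\,V'(t),
\end{align*}
so that $\mathcal{A}$ is monotone on any interval where $V'$ keeps a fixed sign — decreasing when $V'>0$, increasing when $V'<0$ — and one has the clean identity $\widetilde{\mathcal{ME}}(t)=-\tfrac{m^{\sigc}}{32}\,\mathcal{A}(t)/V(t)$ tying the two together.

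The heart of the argument is then a continuity/bootstrap scheme. For Part (1) the hypotheses $\mathcal{P}(0)<\mathcal{P}_Q$ and $V'(0)\geq0$ place the solution in the region $\{G>0\}=\{V''>0\}$ with $V$ increasing, hence $V'>0$ and $\mathcal{A}$ decreasing; for Part (2) the hypotheses $\mathcal{P}(0)>\mathcal{P}_Q$ and $V'(0)\leq0$ give the mirror picture with $V'<0$ and $\mathcal{A}$ increasing. In either case I would show that $\widetilde{\mathcal{ME}}(t)\leq\mathcal{ME}_Q$ persists, and combine this with the sharp Gagliardo--Nirenberg inequality \eqref{eq:GN-ineq}: writing $\eta(t):=\|\nabla u(t)\|_{L^2}^2\,m^{\sigc}$ and $\mathcal{ME}=\tfrac12\eta(t)-\tfrac{1}{\alpha+2}\mathcal{P}(t)$, the one-variable function $\Phi(\eta):=\tfrac12\eta-\tfrac{C_{\mathrm{GN}}}{\alpha+2}\,\eta^{(N\alpha+2b)/4}$ attains its unique maximum $\mathcal{ME}_Q$ at $\eta_Q:=\|\nabla Q\|_{L^2}^2[M(Q)]^{\sigc}$. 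The modulated bound forces a (suitably modulated) gradient onto a single branch of $\{\Phi\leq\mathcal{ME}_Q\}$, precluding any crossing of the relevant threshold and furnishing the quantitative gap: $\sup_t\mathcal{P}(t)<\mathcal{P}_Q$ in Part (1), and $\sup_tG(u(t))\leq-\delta$ in Part (2). As a self-contained endgame for Part (2) one may instead combine $(V')^2\leq16V\|\nabla u\|_{L^2}^2$ with the energy law to get the closed differential inequality $V''(t)\leq 4(N\alpha+2b)E(u_0)-\tfrac{N\alpha+2b-4}{8}\,(V'(t))^2/V(t)$, whose first integral is $(V')^2-32E(u_0)V=\mathcal{A}$; once $V'(0)\leq0$ and the sign information on $\mathcal{A}$ are secured, $V$ is driven to $0$ in finite time.

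I expect the main obstacle to be precisely this invariance step \emph{above} the classical threshold: at a putative first crossing time the virial $V''$ need not carry the favourable sign, so the naive monotonicity of $V$ fails. The resolution must extract a genuine variational obstruction from \eqref{eq:ener-above-2} — this is where $\widetilde{\mathcal{ME}}(0)\leq\mathcal{ME}_Q$ enters, realized through a phase/scaling modulation $v=e^{i\mu(t)|x|^2/4}u(t)$ that removes the virial velocity so that $\Phi$ applies to the modulated profile $v$ with $E(v)[M(v)]^{\sigc}=\widetilde{\mathcal{ME}}(t)$. Making this modulation rigorous, and controlling the sign of $V'$ for \emph{all} $t$ so that $\mathcal{A}$ remains monotone and $\widetilde{\mathcal{ME}}(t)\leq\mathcal{ME}_Q$ is propagated, is the delicate point on which the whole dichotomy rests; the remaining steps are the (routine) conversions between $\mathcal{P}$, $G$, $\eta$, and the conserved quantities carried out above.
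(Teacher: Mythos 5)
Your setup is sound and in fact coincides with the paper's: the modulation $v=e^{i\mu(t)|x|^2/4}u(t)$ with $\mu=-V'/(2V)$ does satisfy $E(v)[M(v)]^{\sigc}=\widetilde{\mathcal{ME}}(t)$, which is exactly what the paper's opening estimate \eqref{eq:est-fini-vari} encodes (proved there by the discriminant-in-$\lambda$ trick applied to $e^{i\lambda|x|^2}f$), and your reformulations of \eqref{eq:ener-above-1}, \eqref{eq:ener-above-2}, \eqref{eq:gwp-above-1}, \eqref{eq:gwp-above-2} are the paper's \eqref{eq:ener-above-1-equi}, \eqref{eq:ener-above-2-equi}, \eqref{eq:gwp-above-1-equi}, \eqref{eq:gwp-above-2-equi}. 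But what you submit is a plan, not a proof: the step you yourself call ``the delicate point on which the whole dichotomy rests'' --- propagating the modulated bound and the sign information for all $t\in[0,T^*)$ --- \emph{is} the theorem, and it is left unproved. In the paper this is done by passing to $z=\sqrt{V}$, converting \eqref{eq:est-fini-vari} into the differential inequality $(z')^2\leq 4g(V'')$ of \eqref{eq:g-V}, where $g$ decreases on $(-\infty,\lambda_0)$, increases on $(\lambda_0,16E)$, and satisfies $g(\lambda_0)=\lambda_0/8$, $g'(\lambda_0)=0$; the key claims \eqref{eq:claim-above-1} and \eqref{eq:claim-above-2} are then established by a first-bad-time contradiction using the identity $z''=\frac{1}{z}\left(\frac{V''}{2}-(z')^2\right)$ and, in Part (1), a Taylor expansion of $g$ at $\lambda_0$ yielding the quantitative gap $V''\geq\lambda_0+\sqrt{\epsilon_0}/C$. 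No counterpart of this bootstrap appears in your proposal.

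Moreover, the mechanisms you offer in its place would not close the gap. First, monotonicity of $\mathcal{A}=(V')^2-32E(u_0)V$ does not by itself propagate $\widetilde{\mathcal{ME}}(t)\leq\mathcal{ME}_Q$: since $\widetilde{\mathcal{ME}}=-\frac{m^{\sigc}}{32}\,\mathcal{A}/V$ and $V$ varies, in Part (1) a decreasing $\mathcal{A}$ together with an increasing $V$ pulls the ratio in opposite directions, so no sign conclusion follows. Second, ``$\mathcal{P}(0)<\mathcal{P}_Q$ and $V'(0)\geq 0$ place the solution in $\{G>0\}$ with $V$ increasing, hence $V'>0$'' is circular (persistence of $V'>0$ requires persistence of $V''>0$, which is what must be shown) and, more importantly, insufficient: since $\mathcal{ME}\geq\mathcal{ME}_Q$ may be strict, there is in general a nonempty region $\mathcal{P}_Q\leq\mathcal{P}<\frac{\mathcal{ME}}{\mathcal{ME}_Q}\mathcal{P}_Q$ on which $G>0$ but \eqref{scat-crite} fails, so positivity of $G$ alone cannot prevent $\mathcal{P}(t)$ from crossing $\mathcal{P}_Q$; ruling out that crossing is precisely what the $g$-bootstrap accomplishes. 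Third, your Part (2) target $\sup_t G(u(t))\leq-\delta$ aims at a statement stronger than the hypotheses are known to deliver: the paper's argument only yields $V''(t)<\lambda_0$ with $\lambda_0\geq 0$ (hence no uniform negative bound on $G$, and Theorem \ref{theo-blow-crite} is never invoked there); blow-up is instead obtained directly from the persistence of $z''<0$, which for $T^*=\infty$ gives $z'(t)\leq z'(1)<z'(0)\leq 0$ for $t\geq 1$ and contradicts $z>0$. So both halves of the dichotomy rest on the bootstrap you deferred, and for Part (2) even the proposed reduction is misdirected.
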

	
	For the scattering part, Theorem \ref{theo-dyna-above} improves a recent result of Campos and Cardoso \cite{CC} at two points: (1) removing the radial assumption and (2) extending the validity of $b$. For the blow-up part, we extend the one in \cite{CC} to any dimensions $N\geq 1$. The proof of Theorem \ref{theo-dyna-above} is based on virial identities and a continuity argument in the same spirit of Duyckaerts and Roudenko \cite{DR-CMP}. 
	
	We finish the introduction by outlining the structure of the paper. In Section \ref{sec:scat}, we give the proof of the scattering criterion given in Theorem \ref{theo-scat-crite}. In Section \ref{sec:blow}, we prove the blow-up criterion given in Theorem \ref{theo-blow-crite}. Finally, we study long time dynamics of $H^1$-solutions lying below, at, and above the ground state threshold in Section \ref{sec:dynamic}.
	
	\section{Scattering criterion}
	\label{sec:scat}
	\setcounter{equation}{0}
	
	\subsection{Local theory}
	In this subsection, we recall the well-posedness theory for \eqref{eq:inls} due to \cite{Guzman, FG-JDE, FG-BBMS}. To this end, we introduce some notations. Let $\gamma \geq 0$. A pair $(q,r)$ is called $\dot{H}^\gamma$-admissible if 
	\[
	\frac{2}{q} +\frac{N}{r} =\frac{N}{2}-\gamma
	\]
	and
	\begin{align} \label{eq:cond-r}
	\left\{
	\renewcommand*{\arraystretch}{1.3}
	\begin{array}{ccl}
	\frac{2N}{N-2\gamma} <r<\frac{2N}{N-2} &\text{if}& N\geq 3, \\
	\frac{2}{1-\gamma} <r<\infty &\text{if} & N=2, \\
	\frac{2}{1-2\gamma}<r<\infty &\text{if} &N=1.
	\end{array}
	\right.
	\end{align}
	The set of all $\dot{H}^\gamma$-admissible pairs is denoted by $\Ac_\gamma$. Similarly, a pair $(q,r)$ is called $\dot{H}^{-\gamma}$-admissible if 
	\[
	\frac{2}{q}+\frac{N}{r} =\frac{N}{2}+\gamma
	\]
	and $r$ satisfies \eqref{eq:cond-r}. The set of all $\dot{H}^{-\gamma}$-admissible pairs is denoted by $\Ac_{-\gamma}$. Note that we do not consider the pair $\left(\infty, \frac{2N}{N-2\gamma}\right)$ as a $\dot{H}^\gamma$-admissible pair. The reason for doing so will be clear in Subsection \ref{subsec:profile}. When $\gamma=0$, 
	we denote $L^2$ instead of $\dot{H}^0$. In this case, the $L^2$-admissible pair is also called Schr\"odinger admissible.
	
	Let $I\subset \R$ be an interval and $\gamma \geq 0$. We define the Strichartz norm
	\[
	\|u\|_{S(I,\dot{H}^\gamma)} := \sup_{(q,r)\in \Ac_\gamma} \|u\|_{L^q_t(I,L^r_x)}.
	\]
	For a set $A \subset \R^N$, we denote 
	\[
	\|u\|_{S(I,\dot{H}^\gamma(A))} := \sup_{(q,r)\in \Ac_\gamma} \|u\|_{L^q_t(I,L^r_x(A))}.
	\]
	When $I =\R$, we omit the dependence on $\R$ and simply denote $\|u\|_{S(\dot{H}^\gamma)}$ and $\|u\|_{S(\dot{H}^\gamma(A))}$. Similarly, we define
	\[
	\|u\|_{S'(I,\dot{H}^{-\gamma})} := \inf_{(q,r)\in \Ac_{-\gamma}} \|u\|_{L^{q'}_t(I,L^{r'}_x)}
	\]
	and for $A \subset \R^N$,
	\[
	\|u\|_{S'(I,\dot{H}^{-\gamma}(A))} := \inf_{(q,r)\in \Ac_{-\gamma}} \|u\|_{L^{q'}_t(I,L^{r'}_x(A))}.
	\]
	As before, when $I=\R$, we simply use $\|u\|_{S'(\dot{H}^{-\gamma})}$ and $\|u\|_{S'(\dot{H}^{-\gamma}(A))}$. 
	
	We have the following Strichartz estimates (see e.g., \cite{Cazenave, KT, Foschi}).
	
	\begin{proposition}[Strichartz estimates \cite{Cazenave, KT, Foschi}] \label{prop-stri-est}
		Let $\gamma\geq 0$ and $I \subset \R$ be an interval. Then there exists a constant $C>0$ independent of $I$ such that
		\[
		\|e^{it\Delta} f\|_{S(I, \dot{H}^\gamma)} \leq C \|f\|_{\dot{H}^\gamma}
		\]
		and
		\[
		\left\| \int_0^t e^{i(t-s)\Delta} F(s) ds \right\|_{S(I,\dot{H}^\gamma)} \leq C \|F\|_{S'(I,\dot{H}^{-\gamma})}.
		\]
		Moreover, the above estimates still hold with $L^\infty_t(I,L^{\frac{2N}{N-2\gamma}}_x)$-norm in place of $S(I,\dot{H}^\gamma)$-norm.
	\end{proposition}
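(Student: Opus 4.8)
The plan is to reduce everything to the classical $L^2$-admissible (Schr\"odinger admissible) estimates, for which the deep analytic input is the dispersive bound together with the abstract $TT^*$ argument of Keel and Tao, and then to transfer these to the $\dot{H}^{\pm\gamma}$ scale by Sobolev embedding, duality, and the Christ--Kiselev lemma. Throughout, the constant is independent of $I$ because every bound proved on $\R$ restricts to an arbitrary subinterval (extending $F$ by zero outside $I$ when necessary), so I would work on $\R$ and suppress $I$. The base estimates come from the mass conservation $\|e^{it\Delta}f\|_{L^2_x}=\|f\|_{L^2_x}$ and the dispersive bound $\|e^{it\Delta}f\|_{L^\infty_x}\lesssim|t|^{-N/2}\|f\|_{L^1_x}$; interpolating gives $\|e^{it\Delta}f\|_{L^r_x}\lesssim|t|^{-N(1/2-1/r)}\|f\|_{L^{r'}_x}$, and feeding this into the $TT^*$/Hardy--Littlewood--Sobolev machinery yields the homogeneous and inhomogeneous estimates for all $L^2$-admissible pairs (this is what is cited from \cite{KT,Foschi}).

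To upgrade the homogeneous estimate to $\dot{H}^\gamma$, I would fix $(q,r)\in\Ac_\gamma$ and choose $\tilde r$ by $\frac{1}{\tilde r}=\frac1r+\frac\gamma N$, so that $(q,\tilde r)\in\Ac_0$; a short computation from \eqref{eq:cond-r} shows that $\tilde r$ indeed lands in the admissible $L^2$ range and that $q>\frac{2}{1-\gamma}$. Since $|\nabla|^\gamma$ commutes with $e^{it\Delta}$ and the Sobolev embedding $\dot{W}^{\gamma,\tilde r}\hookrightarrow L^r$ holds, I obtain
\[
\|e^{it\Delta}f\|_{L^q_tL^r_x}\lesssim \big\||\nabla|^\gamma e^{it\Delta}f\big\|_{L^q_tL^{\tilde r}_x}=\big\|e^{it\Delta}|\nabla|^\gamma f\big\|_{L^q_tL^{\tilde r}_x}\lesssim \big\||\nabla|^\gamma f\big\|_{L^2}=\|f\|_{\dot{H}^\gamma},
\]
and taking the supremum over $\Ac_\gamma$ gives the first claimed inequality. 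The identical argument at regularity $-\gamma$ produces $\|e^{it\Delta}f\|_{L^a_tL^b_x}\lesssim\|f\|_{\dot{H}^{-\gamma}}$ for every $(a,b)\in\Ac_{-\gamma}$, which I will need for duality.

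For the inhomogeneous estimate I would first treat the untruncated integral by factoring $\int_{\R}e^{i(t-s)\Delta}F(s)\,ds=e^{it\Delta}g$ with $g=\int_{\R}e^{-is\Delta}F(s)\,ds$. The homogeneous $\dot{H}^\gamma$ bound controls $\|e^{it\Delta}g\|_{S(\dot{H}^\gamma)}$ by $\|g\|_{\dot{H}^\gamma}$, while the $L^2$-adjoint of the homogeneous estimate at regularity $-\gamma$ (using $(\dot{H}^{-\gamma})^\ast=\dot{H}^\gamma$) gives $\|g\|_{\dot{H}^\gamma}\lesssim\|F\|_{S'(\dot{H}^{-\gamma})}$; composing the two bounds the untruncated operator. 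To recover the retarded integral $\int_0^t$, I would invoke the Christ--Kiselev lemma, which applies because the left time exponent $q$ and the dual right time exponent $a'$ satisfy the strict inequality $q>\frac{2}{1-\gamma}>a'$ on the open ranges of \eqref{eq:cond-r}. It is precisely to guarantee this strictness, i.e. to avoid the forbidden double endpoint, that the pair $(\infty,\frac{2N}{N-2\gamma})$ is excluded from $\Ac_\gamma$ and the inequalities in \eqref{eq:cond-r} are kept open. (Alternatively, one may quote Foschi's inhomogeneous Strichartz estimates \cite{Foschi} directly, after verifying the scaling and acceptability conditions for this non-symmetric $\dot{H}^\gamma$/$\dot{H}^{-\gamma}$ pairing.)

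Finally, for the ``moreover'' statement I would handle the excluded endpoint pair $(\infty,\frac{2N}{N-2\gamma})$ separately: the Sobolev embedding $\dot{H}^\gamma\hookrightarrow L^{2N/(N-2\gamma)}$ combined with $\|e^{it\Delta}f\|_{\dot{H}^\gamma}=\|f\|_{\dot{H}^\gamma}$ yields the homogeneous $L^\infty_tL^{2N/(N-2\gamma)}_x$ bound, and the same embedding applied to the Duhamel term (which the previous step already controls in $L^\infty_t\dot{H}^\gamma$) gives the inhomogeneous one. The cases $N=1,2$ follow by the same scheme with the ranges of \eqref{eq:cond-r} adjusted. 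The main obstacle I anticipate is not any single inequality but the exponent book-keeping in the non-symmetric inhomogeneous bound: one must check that the auxiliary exponent $\tilde r$ stays admissible and that the Christ--Kiselev gap $q>a'$ persists uniformly throughout the open ranges \eqref{eq:cond-r} in every dimension, with the genuinely deep endpoint $L^2$-theory imported wholesale from \cite{KT}.
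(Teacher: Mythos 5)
The paper gives no proof of this proposition: it is quoted wholesale from \cite{Cazenave, KT, Foschi}, so your attempt can only be measured against the literature it invokes. Your treatment of the homogeneous estimate is correct and standard: for $(q,r)\in\Ac_\gamma$ the choice $\frac{1}{\tilde r}=\frac1r+\frac{\gamma}{N}$ does produce an $L^2$-admissible pair $(q,\tilde r)$, and Sobolev embedding plus Keel--Tao gives the bound; the homogeneous half of the ``moreover'' claim (unitarity of $e^{it\Delta}$ on $\dot H^\gamma$ plus $\dot H^\gamma\hookrightarrow L^{2N/(N-2\gamma)}$) is also fine.

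The inhomogeneous estimate, however, contains a genuine gap. Your argument rests on the claim that ``the identical argument at regularity $-\gamma$'' yields $\|e^{it\Delta}f\|_{L^a_tL^b_x}\lesssim\|f\|_{\dot H^{-\gamma}}$ for $(a,b)\in\Ac_{-\gamma}$. This is false on two levels. The mechanism fails: at negative regularity the Sobolev step would have to \emph{gain} derivatives (dominate an $L^b_x$ norm by a norm of $|\nabla|^{-\gamma}f$), and no such embedding exists. Worse, the estimate itself is false: replacing $f$ (with $\widehat{f}$ compactly supported) by the modulated function $f_R(x)=e^{ix\cdot\xi_R}f(x)$, $|\xi_R|=R$, leaves $\|e^{it\Delta}f_R\|_{L^a_tL^b_x}$ unchanged by Galilean invariance (the modulus of the free evolution is merely translated in space), while $\|f_R\|_{\dot H^{-\gamma}}\sim R^{-\gamma}\|f\|_{L^2}\to 0$ as $R\to\infty$; so no homogeneous Strichartz estimate with data in a negative-order Sobolev space can hold. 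With this lemma gone, your duality bound $\bigl\|\int e^{-is\Delta}F\,ds\bigr\|_{\dot H^\gamma}\lesssim\|F\|_{S'(\dot H^{-\gamma})}$ fails, the factorization of the untruncated operator through the single Hilbert norm $\dot H^\gamma$ collapses, and Christ--Kiselev has nothing left to upgrade. This is not repairable bookkeeping: the estimate pairing $\Ac_\gamma$ on the left with duals of $\Ac_{-\gamma}$ on the right is a \emph{non-admissible-pairs} inhomogeneous estimate, which is strictly stronger than anything obtainable by composing homogeneous estimates with duality and Christ--Kiselev; its proof (Kato, Vilela, Foschi) estimates the retarded bilinear form $\iint_{s<t}\langle e^{i(t-s)\Delta}F(s),G(t)\rangle\,ds\,dt$ directly, via a Whitney-type decomposition in $|t-s|$, the dispersive estimate, and Hardy--Littlewood--Sobolev. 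Your parenthetical fallback --- quote \cite{Foschi} after verifying scaling and acceptability --- is the correct route, and it is precisely what the paper does by citation; but in your write-up it is an aside, not the argument. The same gap infects the inhomogeneous half of the ``moreover'' statement: $S(\dot H^{\gamma})$ contains no $L^\infty_t$-based norm, so ``the previous step'' does not control the Duhamel term in $L^\infty_t\dot H^\gamma_x$; that bound is again a non-admissible estimate (it does follow from Foschi's theorem, the pair $\bigl(\infty,\frac{2N}{N-2\gamma}\bigr)$ being acceptable, but not from anything you established). Incidentally, the paper excludes $\bigl(\infty,\frac{2N}{N-2\gamma}\bigr)$ from $\Ac_\gamma$ not to create a Christ--Kiselev gap but for the profile decomposition in Subsection \ref{subsec:profile}, where Strichartz norms with finite time exponents must vanish on remote time intervals.
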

	
	We also need the following nonlinear estimates due to \cite[Lemma 2.5]{Campos} and \cite[Lemma 2.1]{CFGM}.
	
	\begin{lemma}[Nonlinear estimates \cite{Campos,CFGM}] \label{lem-non-est}
		Let $N\geq 2$, $0<b<\min\left\{2,\frac{N}{2}\right\}$, and $\frac{4-2b}{N}<\alpha<\alpha(N)$. Then there exists $\theta \in (0,\alpha)$ sufficiently small so that
		\begin{align*}
		\||x|^{-b} |u|^\alpha v\|_{S'(\dot{H}^{-\gamc})} &\lesssim \|u\|^\theta_{L^\infty_t H^1_x} \|u\|^{\alpha-\theta}_{S(\dot{H}^{\gamc})} \|v\|_{S(\dot{H}^{\gamc})}, \\
		\||x|^{-b} |u|^\alpha v\|_{S'(L^2)} &\lesssim \|u\|^\theta_{L^\infty_t H^1_x} \|u\|^{\alpha-\theta}_{S(\dot{H}^{\gamc})} \|v\|_{S(L^2)}, \\
		\|\nabla (|x|^{-b} |u|^\alpha u)\|_{S'(L^2)} &\lesssim \|u\|^\theta_{L^\infty_t H^1_x} \|u\|^{\alpha-\theta}_{S(\dot{H}^{\gamc})} \|\nabla u\|_{S(L^2)}.
		\end{align*}
		Note that if $b=0$, we can take $\theta =0$ in the above estimates.
	\end{lemma}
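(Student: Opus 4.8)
The plan is to prove all three estimates by a single localization strategy. Write $\R^N = B \cup B^c$ with $B := \{ x \in \R^N : |x| \le 1 \}$, and estimate each dual Strichartz norm separately over $B$ and over $B^c$ before adding. On $B^c$ the weight satisfies $|x|^{-b} \le 1$, so the problem reduces to the classical intercritical nonlinear estimates for the pure-power nonlinearity. On $B$ the weight is singular but lies in $L^\gamma(B)$ for every $\gamma < N/b$, and since $b < N/2$ we have $N/b > 2$, leaving room to absorb the weight through H\"older's inequality. The role of the small parameter $\theta$ is to place a small power of $u$ in the energy norm $L^\infty_t H^1_x$ (which, via Sobolev embedding, controls $L^\infty_t L^\rho_x$ for a range of $\rho$), thereby gaining the flexibility needed to keep all remaining Lebesgue exponents inside the admissibility window \eqref{eq:cond-r}.

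First I would treat $B^c$. Choosing an $\dot{H}^{\gamc}$-admissible pair (resp. an $L^2$-admissible pair for the second estimate) for the $v$-factor and an $\dot{H}^{\gamc}$-admissible pair for the $u$-factors, I would apply H\"older in space and then in time, using the boundedness of the weight together with Proposition \ref{prop-stri-est} to land in the required dual norm. Here no energy norm is needed, so $\theta = 0$ is admissible on this region, and the step mirrors the estimates already established in \cite{Campos, CFGM}.

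The main work is on $B$. Fixing a dual admissible pair $(q',r')$, I would split the spatial exponent $r'$ by H\"older into one slot for the weight $|x|^{-b} \in L^\gamma(B)$ with $\gamma$ slightly below $N/b$, then $\theta$ slots for $u$ in $L^\rho_x$ (controlled by $\|u\|_{H^1} \lesssim \|u\|_{L^\rho}$ via Sobolev), $\alpha - \theta$ slots for $u$ at the Lebesgue exponent of an $\dot{H}^{\gamc}$-admissible pair, and one slot for $v$. The point is that the strict inequality $b < N/2$ allows $\gamma < N/b$ to be taken with positive slack, while $\theta > 0$ supplies the extra degree of freedom so that the H\"older exponents sum to $1$ and every exponent attached to $u$ and $v$ stays admissible in the sense of \eqref{eq:cond-r}. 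A final H\"older in time, combined with the Strichartz norms, then closes this region. For the gradient estimate I would use the product rule: $\nabla(|x|^{-b}|u|^\alpha u)$ produces the term $|x|^{-b}|u|^\alpha \nabla u$, handled exactly as above with $\nabla u$ in place of $v$, and the more singular term comparable to $|x|^{-b-1}|u|^\alpha u$. The latter is the critical contribution near the origin, and I would control it using $|x|^{-b-1} \in L^\gamma(B)$ for $\gamma < N/(b+1)$, which remains compatible with the admissibility constraints because $b < \min\{2,N/2\}$ forces $b+1 < N$ for every $N \ge 2$.

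The hard part will be the bookkeeping on $B$: one must verify that the full system of H\"older exponents --- the weight exponent $\gamma$, the energy exponent $\rho$, the $\theta$ and $\alpha-\theta$ Strichartz exponents, and the $v$-exponent --- can be balanced simultaneously so that $\gamma < N/b$ (resp. $\gamma < N/(b+1)$ for the gradient term) while each $u$- and $v$-Lebesgue exponent lies strictly inside \eqref{eq:cond-r}. The hypothesis $b < N/2$ together with the freedom to shrink $\theta$ is exactly what guarantees a nonempty range of feasible exponents, and confirming this nonemptiness --- rather than any single inequality --- is the crux of the argument.
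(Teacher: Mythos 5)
Your plan for the first two (non-gradient) estimates is essentially the standard localization argument of \cite{Campos, CFGM}, which the paper simply cites, so no objection there (modulo one slip: on $B^c$ you cannot take $\theta=0$ after merely bounding $|x|^{-b}\le 1$, since the resulting unweighted estimate is inconsistent with the scaling \eqref{eq:scaling}; the references keep the weight in $L^{r_1}(B^c)$ with $\frac{1}{r_1}$ slightly \emph{below} $\frac{b}{N}$ and still need the $\theta$-energy factors to rebalance the exponents). The genuine gap is in the third estimate, which is precisely the one the paper has to reprove because Campos's argument uses the endpoint pair $\left(2,\frac{2N}{N-2}\right)$, excluded by \eqref{eq:cond-r}. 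Your treatment of the singular term $|x|^{-b-1}|u|^\alpha u$ does not close. Integrability of the weight ($b+1<N$, which indeed holds) is not the obstruction: the real problem is that this term contains \emph{no} derivative of $u$, yet the right-hand side of the lemma must be $\|u\|^\theta_{L^\infty_t H^1_x}\|u\|^{\alpha-\theta}_{S(\dot H^{\gamc})}\|\nabla u\|_{S(L^2)}$. The only way a H\"older scheme can make $\nabla u$ appear is Sobolev embedding on a \emph{single} factor, $\|u\|_{L^s_x}\lesssim \|\nabla u\|_{L^{\rho}_x}$ with $\frac1s=\frac1\rho-\frac1N$, which forces $\rho<N$ for an $L^2$-admissible exponent $\rho$. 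For $N=2$ this is impossible (all admissible exponents exceed $2=N$). For $N=3$ it is infeasible on part of the stated range: writing out the space identity $\frac1r+\frac{\alpha}{\overline r}+\frac1\rho = 1-\frac b3$ (which the time identity forces, up to $O(\theta)$ corrections), the constraints $\frac1r>\frac16$, $\frac{\alpha}{\overline r}>\frac{\alpha}{6}$, $\frac1\rho>\frac13$ are compatible only if $\alpha<3-2b$, whereas the lemma allows $\alpha$ up to $4-2b$. So no bookkeeping, however careful, rescues the scheme as proposed; the feasibility you defer to "the crux" actually fails.

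The paper's proof contains exactly the missing idea: instead of H\"older with the more singular weight, it uses the decomposition \eqref{eq:obser} together with Hardy's inequality $\||x|^{-1} f\|_{L^{r_2}_x}\le \frac{r_2}{N-r_2}\|\nabla f\|_{L^{r_2}_x}$ applied to the \emph{full product} $f=|u|^\alpha u$, at the exponent $r_2$ obtained after peeling off $|x|^{-b}$ from the dual exponent $r'$. This converts $|x|^{-b-1}|u|^\alpha u$ back into $|x|^{-b}\nabla(|u|^\alpha u)$, so both terms of the product rule are estimated identically, and the derivative then lands on one factor of $u$ at an admissible exponent by the chain rule. The requirement is only $1<r_2<N$ for the product exponent $r_2$ (which is small, close to the dual end), and this is exactly where the hypothesis $b<\frac N2$ enters; there is no constraint of the form $\rho<N$ on any admissible exponent. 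If you want to salvage your approach, you should replace the $L^\gamma(B)$, $\gamma<\frac{N}{b+1}$, treatment of the singular piece by this Hardy-inequality step; the rest of your outline then matches the paper's argument.
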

	
	\begin{proof}
		The first two estimates were proved in \cite[Lemma 2.5]{Campos} (for $N\geq 3$) and \cite[Lemma 2.1]{CFGM} (for $N\geq 2$). An estimate similar to the last one was proved in \cite[Lemma 2.5]{Campos} for $N\geq 3$. However, the proof in \cite{Campos} used the dual pair of the end-point $\left(2,\frac{2N}{N-2}\right)$ which, however, is excluded in our definition of $L^2$-admissible pair (see \eqref{eq:cond-r}). Thus we need a different argument. Let $\theta>0$ be a small parameter to be chosen later. We denote
		\begin{align*}
		q' &= \frac{4}{2+\theta}, & r' &= \frac{2N}{N+2-\theta}, \\
		\overline{a} &= \frac{4\alpha(\alpha+1-\theta)}{4-2b-(N-2)\alpha +\theta \alpha}, & \overline{r} &= \frac{2N\alpha(\alpha+1-\theta)}{(N+2-2b)\alpha -\theta(4-2b+\alpha)}, \\
		\overline{q} &= \frac{4\alpha(\alpha+1-\theta)}{\alpha(N\alpha-2+2b)-\theta(N\alpha-4+2b-\alpha)}, & \overline{m}_{\pm} &= \frac{N\alpha}{2-b\mp N\alpha \theta}.
		\end{align*}
		Here $(q',r')$ is the dual pair of $\left(\frac{4}{2-\theta}, \frac{2N}{N-2+\theta}\right) \in \Ac_0$. We can readily check that $(\overline{q}, \overline{r}) \in \Ac_0$ and $(\overline{a}, \overline{r}) \in \Ac_{\gamc}$ provided that $\theta>0$ is taken sufficiently small. Moreover, as $\frac{4-2b}{N}<\alpha<\frac{4-2b}{N-2}$, we have $2<\overline{m}_\pm <\frac{2N}{N-2}$ for $\theta>0$ sufficiently small. 
		
		We observe that 
		\begin{align} \label{eq:obser}
		\nabla(|x|^{-b} |u|^\alpha u) = |x|^{-b} \nabla(|u|^\alpha u) - b \frac{x}{|x|} |x|^{-b} \left( |x|^{-1} |u|^{\alpha} u\right) 
		\end{align}
		and
		\[
		\||x|^{-b} f\|_{L^{r'}_x(A)} \leq \||x|^{-b} \|_{L^{r_1}_x(A)} \|f\|_{L^{r_2}_x},
		\]
		where $A$ stands for either $B=B(0,1)$ or $B^c=\R^N \backslash B(0,1)$. To ensure $\||x|^{-b}\|_{L^{r_1}_x(A)}<\infty$, we take
		\[
		\frac{1}{r_1} = \frac{b}{N} \pm \theta^2,
		\]
		where the plus sign is for $A=B$ and the minus one is for $A=B^c$. It follows that 
		\[
		\frac{1}{r_2}=\frac{1}{r'} -\frac{1}{r_1} = \frac{N+2-2b-\theta}{2N} \mp \theta^2.
		\]
		As $\frac{1}{N} <\frac{N+2-2b}{2N}<1$ for $N\geq 2$ and $0<b<\frac{N}{2}$, we choose $\theta>0$ sufficiently small so that $1<r_2<N$ which allows us to use the Hardy's inequality (see e.g., \cite{OK})
		\[
		\||x|^{-1} f\|_{L^{r_2}_x} \leq \frac{r_2}{N-r_2} \|\nabla f\|_{L^{r_2}_x}.
		\]
		Applying the above inequality to $f=|u|^\alpha u$ and using \eqref{eq:obser}, we see that
		\[
		\|\nabla(|x|^{-b} |u|^\alpha u)\|_{L^{r'}_x} \lesssim \|\nabla(|u|^\alpha u)\|_{L^{r_2}_x}.
		\]
		By H\"older's inequality and the fact that
		\[
		\frac{1}{r_2} = \frac{\theta}{\overline{m}_{\pm}} + \frac{\alpha+1-\theta}{\overline{r}},
		\] 
		we have
		\[
		\|\nabla(|x|^{-b} |u|^\alpha u)\|_{L^{r'}_x} \lesssim \|u\|^\theta_{L^{\overline{m}_\pm}_x} \|u\|^{\alpha-\theta}_{L^{\overline{r}}_x} \|\nabla u\|_{L^{\overline{r}}_x}.
		\]
		By H\"older's inequality in time with 
		\[
		\frac{1}{q'} = \frac{\alpha-\theta}{\overline{a}} + \frac{1}{\overline{q}},
		\] 
		we get
		\begin{align*}
		\|\nabla(|x|^{-b} |u|^\alpha u)\|_{L^{q'}_t L^{r'}_x} &\lesssim \|u\|^\theta_{L^\infty_t L^{\overline{m}_\pm}_x} \|u\|^{\alpha-\theta}_{L^{\overline{a}}_t L^{\overline{r}}_x} \|\nabla u\|_{L^{\overline{q}}_t L^{\overline{r}}_x} \\
		&\lesssim \|u\|^\theta_{L^\infty_t H^1_x} \|u\|^{\alpha-\theta}_{L^{\overline{a}}_t L^{\overline{r}}_x} \|\nabla u\|_{L^{\overline{q}}_t L^{\overline{r}}_x},
		\end{align*}
		where the last inequality follows from the Sobolev embedding as $2<\overline{m}_\pm <\frac{2N}{N-2}$. The proof is complete.
	\end{proof}

	Using Proposition \ref{prop-stri-est} and Lemma \ref{lem-non-est}, we have the following result.
	
	\begin{proposition}[Local theory \cite{Guzman,FG-JDE, FG-BBMS}]  \label{lem-local-theory}
		Let $N\geq 2$, $0<b<\min \left\{2,\frac{N}{2}\right\}$, and $\frac{4-2b}{N}<\alpha<\alpha(N)$.
		\begin{itemize} [leftmargin=5mm]
			\item[(1)] (Local well-posedness) Let $u_0 \in H^1$. Then there exist $T_*,T^* \in (0,\infty]$, and a unique local solution to \eqref{eq:inls} satisfying
			\[
			u \in C((-T_*, T^*), H^1) \cap  L^q_{\loc}(-T_*,T^*), W^{1,r})
			\] 
			for any $(q,r) \in \Ac_0$. If $T^*<\infty$ (resp. $T_*<\infty$), then $\lim_{t\nearrow T^*} \|\nabla u(t)\|_{L^2} =\infty$ (resp. $\lim_{t\searrow -T_*} \|\nabla u(t)\|_{L^2} =\infty$).
			\item[(2)] (Small data scattering) Let $T>0$ be such that $\|u(T)\|_{H^1} \leq A$ for some constant $A>0$. Then there exists $\delta=\delta(A)>0$ such that if 
			\[
			\|e^{i(t-T)\Delta} u(T)\|_{S([T,\infty),\dot{H}^{\gamc})} <\delta,
			\]
			then the corresponding solution to \eqref{eq:inls} with initial data $\left.u\right|_{t=T} = u(T)$ exists globally in time and satisfies
			\begin{align*}
			\|u\|_{S([T,\infty), \dot{H}^{\gamc})} &\leq 2 \|e^{i(t-T)\Delta} u(T)\|_{S([T,\infty), \dot{H}^{\gamc})}, \\
			\|\scal{\nabla} u\|_{S([T,\infty),L^2)} &\leq C \|u(T)\|_{H^1}.
			\end{align*}
			\item[(3)] (Scattering condition) Let $u$ be a global solution to \eqref{eq:inls}. Assume that
			\[
			\|u\|_{L^\infty_t(\R, H^1_x)} \leq A, \quad \|u\|_{S(\dot{H}^{\gamc})} <\infty.
			\]
			Then $u$ scatters in $H^1$ in both directions. 
		\end{itemize}
	\end{proposition}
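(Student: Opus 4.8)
All three assertions follow by combining the Strichartz estimates of Proposition~\ref{prop-stri-est} with the weighted nonlinear estimates of Lemma~\ref{lem-non-est}, through the contraction mapping principle together with standard bootstrap and continuity arguments. The solution is always understood through the Duhamel formula
\begin{align*}
u(t) = e^{it\Delta} u_0 + i\int_0^t e^{i(t-s)\Delta}\left(|x|^{-b}|u|^\alpha u\right)(s)\,ds
\end{align*}
and its translates based at an arbitrary initial time; the key point is that the singular and decaying weight $|x|^{-b}$ is entirely absorbed into Lemma~\ref{lem-non-est} (and its subcritical variants), after which the scheme is that of the homogeneous intercritical NLS.

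For the local well-posedness in Item (1), note that $0<\gamc<1$ places the problem in the $H^1$-subcritical regime, so a Cazenave--Weissler-type contraction applies. On a short interval $I=[0,T]$ I would contract the Duhamel map $\Phi(u):=e^{it\Delta}u_0+i\int_0^t e^{i(t-s)\Delta}(|x|^{-b}|u|^\alpha u)\,ds$ in a ball of $\bigcap_{(q,r)\in\Ac_0}L^q_t(I,W^{1,r}_x)$; Proposition~\ref{prop-stri-est} controls the linear part, while a subcritical variant of the weighted estimates behind Lemma~\ref{lem-non-est} controls the nonlinearity and gains a positive power of $|I|$ via H\"older in time, so that the contraction closes for $T$ small. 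The blow-up alternative is then the usual consequence: were $\|\nabla u(t)\|_{L^2}$ to remain bounded on $[0,T^*)$ with $T^*<\infty$, the length of the interval furnished by the fixed point would be bounded below uniformly, permitting continuation past $T^*$ and contradicting maximality.

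For the small-data scattering in Item (2), I would bootstrap on $[T,\infty)$. Writing $S_0 := \|e^{i(t-T)\Delta}u(T)\|_{S([T,\infty),\dot{H}^{\gamc})}<\delta$, Proposition~\ref{prop-stri-est} and the first two estimates of Lemma~\ref{lem-non-est} yield
\begin{align*}
\|u\|_{S([T,\infty),\dot{H}^{\gamc})} \lesssim S_0 + A^\theta\,\|u\|_{S([T,\infty),\dot{H}^{\gamc})}^{\alpha+1-\theta},
\end{align*}
and a continuity argument in the endpoint $T'\nearrow\infty$ shows that for $\delta=\delta(A)$ small the critical norm stays below $2S_0$. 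The third estimate of Lemma~\ref{lem-non-est} then upgrades this to $\|\scal{\nabla}u\|_{S([T,\infty),L^2)}\lesssim\|u(T)\|_{H^1}$, absorbing the nonlinear contribution using $\|u\|_{S([T,\infty),\dot{H}^{\gamc})}\leq 2S_0$; in particular $T^*=\infty$.

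Finally, Item (3) follows from Item (2) by a covering argument. Since $\|u\|_{S(\dot{H}^{\gamc})}<\infty$, I would split $\R$ into finitely many intervals on each of which $\|u\|_{S(I_j,\dot{H}^{\gamc})}$ is arbitrarily small; feeding $A=\|u\|_{L^\infty_t(\R,H^1_x)}$ and this smallness into Lemma~\ref{lem-non-est} on each piece and summing gives $\|\scal{\nabla}u\|_{S(\R,L^2)}<\infty$. Scattering is then routine: setting $u_+:=u_0 + i\int_0^\infty e^{-is\Delta}(|x|^{-b}|u|^\alpha u)\,ds$, one has $u(t)-e^{it\Delta}u_+ = -i\int_t^\infty e^{i(t-s)\Delta}(|x|^{-b}|u|^\alpha u)\,ds$, whose $H^1$-norm is controlled by $\|\scal{\nabla}(|x|^{-b}|u|^\alpha u)\|_{S'([t,\infty),L^2)}\to 0$ as $t\to\infty$ by the finiteness of the global Strichartz norms; the backward statement is identical. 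The only genuine difficulty, namely controlling the weight $|x|^{-b}$ near the origin and at infinity (which forces the splitting $|x|<1$, $|x|>1$, the use of Hardy's inequality, and the restriction $b<\frac{N}{2}$), is already packaged into Lemma~\ref{lem-non-est}; what remains is merely the bookkeeping of checking that the Lebesgue exponents arising there are admissible in the sense of \eqref{eq:cond-r} so that Proposition~\ref{prop-stri-est} applies.
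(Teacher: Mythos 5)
Your proposal is correct and takes essentially the same approach as the paper: the paper does not prove this proposition itself but recalls it from \cite{Guzman,FG-JDE,FG-BBMS}, noting only that it follows by combining the Strichartz estimates of Proposition~\ref{prop-stri-est} with the nonlinear estimates of Lemma~\ref{lem-non-est}. Your contraction-mapping argument for Item (1), bootstrap/continuity argument for Item (2), and interval-splitting plus Cauchy-criterion argument for Item (3) are precisely the standard scheme carried out in those references, with the weight $|x|^{-b}$ absorbed into Lemma~\ref{lem-non-est} exactly as you describe.
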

	Here we have used the following convention
	\[
	\|\scal{\nabla} f\|_X := \|f\|_X + \|\nabla f\|_X, \quad f \in X.
	\]
	
	We also recall the following stability result due to \cite{FG-JDE, FG-BBMS}.
	
	\begin{lemma}[Stability] \label{lem-sta}
		Let $N\geq 2$, $0<b<\min \left\{2,\frac{N}{2}\right\}$, and $\frac{4-2b}{N}<\alpha <\alpha(N)$. Let $0 \in I \subseteq \R$ and $\tilde{u}: I \times \R^N \rightarrow \C$ be a solution to 
		\[
		i\partial_t \tilde{u} + \Delta \tilde{u} + |x|^{-b}|\tilde{u}|^\alpha \tilde{u} = e
		\]
		with $\left.\tilde{u}\right|_{t=0} = \tilde{u}_0$ satisfying
		\[
		\|\tilde{u}\|_{L^\infty_t(I,H^1_x)} \leq M, \quad \|\tilde{u}\|_{S(I,\dot{H}^{\gamc})} \leq L
		\]
		for some constants $M, L>0$. Let $u_0 \in H^1$ be such that
		\[
		\|u_0- \tilde{u}_0\|_{H^1} \leq M', \quad \|e^{it\Delta} (u_0 - \tilde{u}_0)\|_{S(I,\dot{H}^{\gamc})} \leq \vareps
		\]
		for some $M'>0$ and some $0<\vareps <\vareps_1=\vareps_1(M,M',L)$. Suppose that
		\[
		\|\scal{\nabla} e\|_{S'(I,L^2)} + \|e\|_{S'(I,\dot{H}^{-\gamc})} \leq \vareps.
		\]
		Then there exists a unique solution $u: I \times \R^N \rightarrow \C$ to \eqref{eq:inls} with $\left. u\right|_{t=0} = u_0$ satisfying
		\begin{align*}
		\|u-\tilde{u}\|_{S(I,\dot{H}^{\gamc})} &\leq C(M,M', L)\vareps, \\
		\|u\|_{L^\infty_t(I,H^1_x)}+\|\scal{\nabla} u\|_{S(I,L^2)} + \|u\|_{S(I,\dot{H}^{\gamc})} &\leq C(M,M',L).
		\end{align*}
	\end{lemma}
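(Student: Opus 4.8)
The plan is to run the standard long-time perturbation (stability) argument of Kenig--Merle type, adapted to the inhomogeneous nonlinearity through the Strichartz estimates of Proposition \ref{prop-stri-est} and, crucially, the trilinear estimates of Lemma \ref{lem-non-est}. The starting point is to use the finiteness of $\|\tilde u\|_{S(I,\dot H^{\gamc})}\le L$ to partition $I$ into $J=J(L,\eta)$ consecutive subintervals $I_j=[t_j,t_{j+1}]$ on each of which $\|\tilde u\|_{S(I_j,\dot H^{\gamc})}\le \eta$, where $\eta=\eta(M,M',L)>0$ is a small absorption threshold to be fixed at the end. Setting $w:=u-\tilde u$, subtracting the two equations gives
\[
i\partial_t w + \Delta w + \big(|x|^{-b}|u|^\alpha u - |x|^{-b}|\tilde u|^\alpha \tilde u\big) = -e,
\]
so that Duhamel's formula on $I_j$ expresses $w$ through the free evolution $e^{i(\cdot-t_j)\Delta}w(t_j)$ and an integral of the nonlinear difference together with $e$.

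On each $I_j$ I would apply Proposition \ref{prop-stri-est} to bound $\|w\|_{S(I_j,\dot H^{\gamc})}$ by the free part $\|e^{i(\cdot-t_j)\Delta}w(t_j)\|_{S(I_j,\dot H^{\gamc})}$ plus the $S'(\dot H^{-\gamc})$-norm of the nonlinear difference and the contribution of $e$. Using the pointwise bound $\big||u|^\alpha u-|\tilde u|^\alpha\tilde u\big|\lesssim(|u|^\alpha+|\tilde u|^\alpha)|w|$ and its gradient analogue, and feeding these into Lemma \ref{lem-non-est} with $v=w$, the difference term acquires a factor $(\|\tilde u\|_{S(I_j,\dot H^{\gamc})}+\|w\|_{S(I_j,\dot H^{\gamc})})^\alpha\lesssim(\eta+\|w\|_{S(I_j,\dot H^{\gamc})})^\alpha$ times $\|w\|_{S(I_j,\dot H^{\gamc})}$, with the $\theta$-powers of $L^\infty_t H^1_x$-norms bounded by $M$ and by the forthcoming uniform $H^1$-bound on $u$. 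For $\eta$ small and $\|w\|_{S(I_j,\dot H^{\gamc})}$ small the superlinear term is absorbed, leaving
\[
\|w\|_{S(I_j,\dot H^{\gamc})} \lesssim \|e^{i(\cdot-t_j)\Delta}w(t_j)\|_{S(I_j,\dot H^{\gamc})} + \vareps.
\]

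The heart of the argument is the induction across the $J$ subintervals. Tracking the free quantity $\rho_j:=\|e^{i(\cdot-t_j)\Delta}w(t_j)\|_{S([t_j,\infty),\dot H^{\gamc})}$, the initial bound $\rho_1\le\|e^{it\Delta}(u_0-\tilde u_0)\|_{S(I,\dot H^{\gamc})}\le\vareps$ holds since $w(0)=u_0-\tilde u_0$. Rewriting $e^{i(\cdot-t_{j+1})\Delta}w(t_{j+1})$ via Duhamel in terms of $e^{i(\cdot-t_j)\Delta}w(t_j)$ and the integral over $I_j$, then applying Strichartz and the per-interval estimate above, produces a recursion of the form $\rho_{j+1}\le(1+C)\rho_j+C\vareps$. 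Iterating over $j\le J$ yields $\rho_j\le(1+C)^J\vareps$, whence $\sum_{j}\|w\|_{S(I_j,\dot H^{\gamc})}\lesssim_{J}\vareps$ and, summing the pieces, $\|w\|_{S(I,\dot H^{\gamc})}\le C(M,M',L)\vareps$. This dictates the choice of $\vareps_1=\vareps_1(M,M',L)$: it must be small enough that $(1+C)^J\vareps_1$ stays below the absorption threshold at every step, and since $J=J(L,\eta)$ this is a legitimate dependence on $M,M',L$ only.

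The step I expect to be the main obstacle is the self-consistency of the bootstrap: the per-interval estimate requires an a priori uniform $H^1$-bound on $u$ in order to control the $\|u\|^\theta_{L^\infty_t H^1_x}$ factors in Lemma \ref{lem-non-est}, yet this bound is itself part of the conclusion. I would close this by running the difference estimate simultaneously at the $L^2$- and $H^1$-level: the third (gradient) estimate of Lemma \ref{lem-non-est}, combined with the first two, controls $\|\scal{\nabla}w\|_{S(I_j,L^2)}$ by $\|w(t_j)\|_{H^1}+\vareps$ plus the already-absorbed superlinear terms, and propagating this interval by interval against $\|\tilde u\|_{L^\infty_t H^1_x}\le M$ and $\|w(0)\|_{H^1}\le M'$ delivers $\|u\|_{L^\infty_t(I,H^1_x)}+\|\scal{\nabla}u\|_{S(I,L^2)}\le C(M,M',L)$, closing the loop and giving the second stated bound. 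Uniqueness follows from the same contraction estimates on short subintervals. The only inhomogeneity-specific point is keeping the weight $|x|^{-b}$ under control, which is precisely what Lemma \ref{lem-non-est} supplies, so no radial or spatial-decay hypothesis is needed beyond $0<b<\min\left\{2,\frac N2\right\}$.
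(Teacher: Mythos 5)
Your proposal is correct and follows essentially the same route as the paper's proof: the paper does not reprove Lemma \ref{lem-sta} but recalls it from \cite{FG-JDE, FG-BBMS}, and the argument there is precisely your scheme — partition $I$ into $J(L,\eta)$ subintervals where $\|\tilde{u}\|_{S(I_j,\dot{H}^{\gamc})}\leq\eta$, run Duhamel plus Strichartz plus the nonlinear estimates of Lemma \ref{lem-non-est} on each piece with absorption of the superlinear terms, and close with the induction on $j$ that forces $\vareps_1=\vareps_1(M,M',L)$ small enough that $(1+C)^{J}\vareps_1$ stays below the absorption threshold. You also correctly identify and resolve the only delicate point specific to the inhomogeneous setting, namely that the $\|u\|^{\theta}_{L^\infty_t H^1_x}$ factors in Lemma \ref{lem-non-est} must be controlled by a simultaneous $L^2$/$H^1$-level bootstrap rather than assumed a priori.
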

	
	\begin{remark} \label{rem-sta}
		If we assume in addition that
		\[
		\|e^{it\Delta} (u_0-\tilde{u}_0)\|_{L^\infty_t(I, L^{\frac{2N}{N-2\gamc}}_x)} \leq \vareps, 
		\]
		then
		\[
		\|u-\tilde{u}\|_{L^\infty_t(I, L^{\frac{2N}{N-2\gamc}}_x)} \leq C(M, M',L)\vareps.
		\]
		In fact, by Duhamel's formula, we have
		\begin{align*}
		u(t) - \tilde{u}(t) = e^{it\Delta}(u_0 - \tilde{u}_0) &+ i \int_0^t e^{i(t-s)\Delta} (|x|^{-b} |u(s)|^\alpha u(s) - |x|^{-b} |\tilde{u}(s)|^\alpha \tilde{u}(s)) ds \\
		&+ i \int_0^t e^{i(t-s)\Delta} e(s) ds.
		\end{align*}
		By Strichartz estimates and Lemma \ref{lem-non-est}, we have
		\begin{align*}
		\|u-\tilde{u}\|_{L^\infty_t(I, L^{\frac{2N}{N-2\gamc}}_x)} &\leq \|e^{it\Delta}(u_0-\tilde{u}_0)\|_{L^\infty_t (I, L^{\frac{2N}{N-2\gamc}}_x)}  + \|e\|_{S'(I,\dot{H}^{-\gamc})} \\
		&\mathrel{\phantom{\leq \|e^{it\Delta}(u_0-\tilde{u}_0)\|_{L^\infty_t (I, L^{\frac{2N}{N-2\gamc}}_x)}}}+ C\||x|^{-b} |u|^\alpha u - |x|^{-b} |\tilde{u}|^\alpha \tilde{u}\|_{S'(I,\dot{H}^{\gamc})} \\
		&\leq \|e^{it\Delta}(u_0-\tilde{u}_0)\|_{L^\infty_t (I, L^{\frac{2N}{N-2\gamc}}_x)}  + \|e\|_{S'(I,\dot{H}^{-\gamc})}  \\
		&\mathrel{\phantom{\leq}} + C \left(\|u\|^\theta_{L^\infty_t(I,H^1_x)} \|u\|^{\alpha-\theta}_{S(I,\dot{H}^{\gamc})}+ \|\tilde{u}\|^\theta_{L^\infty_t(I,H^1_x)} \|\tilde{u}\|^{\alpha-\theta}_{S(I,\dot{H}^{\gamc})} \right) \|u-\tilde{u}\|_{S(I,\dot{H}^{\gamc})} \\
		&\leq C(M,M', L) \vareps.
		\end{align*}
	\end{remark}

	\subsection{Variational analysis}
	We recall some properties of the ground state $Q$ which is the unique positive radial solution to \eqref{eq:ground}. The ground state $Q$ optimizes the weighted Gagliardo-Nirenberg inequality: for $N\geq 1$ and $0<b<\min \{2, N\}$,
	\begin{align} \label{eq:GN-ineq}
	P(f) \leq C_{\opt} \|\nabla f\|^{\frac{N\alpha+2b}{2}}_{L^2} \|f\|^{\frac{4-2b-(N-2)\alpha}{2}}_{L^2}, \quad f \in H^1(\R^N),
	\end{align}
	that is
	\[
	C_{\opt} = P(Q) \div \left[\|\nabla Q\|^{\frac{N\alpha+2b}{2}}_{L^2} \|Q\|^{\frac{4-2b-(N-2)\alpha}{2}}_{L^2} \right],
	\]
	where $P(f)$ is as in \eqref{eq:P}. We have the following Pohozaev's identities (see e.g., \cite{Farah})
	\begin{align} \label{eq:poho-iden}
	\|Q\|^2_{L^2} = \frac{4-2b-(N-2)\alpha}{N\alpha+2b} \|\nabla Q\|^2_{L^2} = \frac{4-2b-(N-2)\alpha}{2(\alpha+2)} P(Q).
	\end{align}
	In particular, we have
	\begin{align} \label{eq:opt-cons}
	C_{\opt} = \frac{2(\alpha+2)}{N\alpha+2b} \left(\|\nabla Q\|_{L^2} \|Q\|^{\sigc}_{L^2} \right)^{-\frac{N\alpha-4+2b}{2}}.
	\end{align}
	We also have
	\begin{align} \label{eq:ener-Q}
	E(Q) = \frac{N\alpha-4+2b}{2(N\alpha+2b)} \|\nabla Q\|^2_{L^2} = \frac{N\alpha-4+2b}{4(\alpha+2)} P(Q)
	\end{align}
	hence
	\begin{align} \label{eq:prop-Q}
	E(Q) [M(Q)]^{\sigc} = \frac{N\alpha-4+2b}{2(N\alpha+2b)} \left(\|\nabla Q\|_{L^2} \|Q\|^{\sigc}_{L^2} \right)^2.
	\end{align}
	
	\subsection{Profile decompositions} \label{subsec:profile}
	In this subsection, we recall the linear profile decomposition and construct some nonlinear profiles associated to \eqref{eq:inls}. Let us start with the following result due to \cite{FXC, Guevara} (see also \cite{FG-JDE,FG-BBMS}).
	
	\begin{lemma}[Linear profile decomposition \cite{FXC, Guevara, FG-JDE, FG-BBMS}] \label{lem-line-prof} Let $N\geq 1$, $0<b<\min\{2,N\}$, and $\frac{4-2b}{N} <\alpha <\alpha(N)$. Let $(\phi_n)_{n\geq 1}$ be a uniformly bounded sequence in $H^1$. Then for each integer $J\geq 1$, there exists a subsequence, still denoted by $\phi_n$, and
		\begin{itemize}[leftmargin=5mm]
			\item for each $1\leq j \leq J$, there exists a fixed profile $\psi^j \in H^1$;
			\item for each $1\leq j \leq J$, there exists a sequence of time shifts $(t^j_n)_{n\geq 1} \subset \R$;
			\item for each $1\leq j \leq J$, there exists a sequence of space shifts $(x^j_n)_{n\geq 1} \subset \R^N$;
			\item there exists a sequence of remainders $(W^J_n)_{n\geq 1} \subset H^1$;
		\end{itemize}
		such that
		\begin{align} \label{line-prof}
		\phi_n(x) = \sum_{j=1}^J e^{-it^j_n \Delta} \psi^j(x-x^j_n) + W^J_n(x).
		\end{align}
		The time and space shifts have a pairwise divergence property, i.e., for $1\leq j \ne k \leq J$, we have
		\begin{align} \label{diver-proper}
		\lim_{n\rightarrow \infty} |t^j_n-t^k_n| + |x^j_n - x^k_n| = \infty.
		\end{align}
		The remainder has the following asymptotic smallness property
		\begin{align*}
		\lim_{J \rightarrow \infty} \left[ \lim_{n\rightarrow \infty} \|e^{it\Delta} W^J_n\|_{S(\dot{H}^{\gamc}) \cap L^\infty_t(\R, L^{\frac{2N}{N-2\gamc}}_x)} \right] =0,
		\end{align*}
		where $\gamc$ is as in \eqref{eq:gamc}. Moreover, for fixed $J$ and $\gamma \in [0,1]$, we have the asymptotic Pythagorean expansions
		\begin{align*} 
		\|\phi_n\|^2_{\dot{H}^\gamma} = \sum_{j=1}^J \|\psi^j\|^2_{\dot{H}^\gamma} + \|W^J_n\|^2_{\dot{H}^\gamma} + o_n(1).
		\end{align*}
		Finally, we may assume either $t^j_n \equiv 0$ or $t^j_n \rightarrow \pm \infty$, and either $x^j_n \equiv 0$ or $|x^j_n| \rightarrow \infty$.
	\end{lemma}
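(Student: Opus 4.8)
The plan is to prove the decomposition by the classical inductive bubble-extraction scheme of Keraani/G\'erard type, adapted to the group of space translations and Schr\"odinger time translations (no scaling parameter appears, since we work under a fixed $H^1$ bound). The engine of the argument is an \emph{inverse Strichartz lemma}: there is a nondecreasing function $\eta:(0,\infty)\to(0,\infty)$ such that whenever $(f_n)$ is bounded in $H^1$, say $\|f_n\|_{H^1}\le A$, and
\[
\limsup_{n\to\infty}\|e^{it\Delta}f_n\|_{L^\infty_t(\R,L^{\frac{2N}{N-2\gamc}}_x)}\ge \eps,
\]
then, after passing to a subsequence, there exist $\psi\in H^1$, $(t_n)\subset\R$ and $(x_n)\subset\R^N$ with
\[
e^{it_n\Delta}f_n(\cdot+x_n)\rightharpoonup \psi \quad\text{weakly in } H^1, \qquad \|\psi\|_{\dot H^{\gamc}}\gtrsim_A \eta(\eps).
\]
To produce such a bubble I would first choose times $\tau_n$ at which $\|e^{i\tau_n\Delta}f_n\|_{L^{2N/(N-2\gamc)}_x}\ge \eps/2$, and then invoke a refined Sobolev embedding interpolating the critical embedding $\dot H^{\gamc}\hookrightarrow L^{2N/(N-2\gamc)}$ against a negative-regularity Besov norm $\dot B^{-s}_{\infty,\infty}$; the dual characterization of that Besov norm in terms of $\sup_{t,x}|e^{it\Delta}(\cdot)|$ locates a spatial center $x_n$ and a time $t_n\approx-\tau_n$, and the weak $H^1$-limit of $e^{it_n\Delta}f_n(\cdot+x_n)$ is then a profile $\psi$ obeying the asserted lower bound. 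This extraction step, together with the careful bookkeeping of the $L^\infty_t$ endpoint norm (which sits at the forbidden admissibility scaling but is still controlled by Proposition \ref{prop-stri-est}), is the technical heart and the main obstacle of the whole argument.

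Granting the inverse lemma, I would iterate. Set $W^0_n:=\phi_n$. Given $W^{J-1}_n$, if $\limsup_n\|e^{it\Delta}W^{J-1}_n\|_{L^\infty_t L^{2N/(N-2\gamc)}_x}=0$ the process terminates and all further profiles are taken to be zero; otherwise the inverse lemma supplies $\psi^J,(t^J_n),(x^J_n)$, and I set
\[
W^J_n:=W^{J-1}_n-e^{-it^J_n\Delta}\psi^J(\cdot-x^J_n),
\]
so that \eqref{line-prof} holds by construction, a diagonal subsequence extraction rendering the choices compatible across all $J$. The pairwise divergence \eqref{diver-proper} then follows by the usual orthogonality argument: since $\psi^J$ is the weak $H^1$-limit of $e^{it^J_n\Delta}W^{J-1}_n(\cdot+x^J_n)$, if for some $k<J$ the parameters $(t^k_n,x^k_n)$ and $(t^J_n,x^J_n)$ failed to diverge, one could pass $e^{it^J_n\Delta}W^{J-1}_n(\cdot+x^J_n)$ into the frame of the $k$-th bubble and conclude that its weak limit already vanishes, contradicting $\|\psi^J\|_{\dot H^{\gamc}}>0$. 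The $\dot H^\gamma$ Pythagorean expansions for $\gamma\in[0,1]$ come from the same weak-convergence structure: $e^{it\Delta}$ and the translations are isometries on each $\dot H^\gamma$, subtracting a weak limit is asymptotically orthogonal, i.e. $\|W^{J-1}_n\|^2_{\dot H^\gamma}=\|\psi^J\|^2_{\dot H^\gamma}+\|W^J_n\|^2_{\dot H^\gamma}+o_n(1)$, and \eqref{diver-proper} annihilates all cross terms between distinct profiles when the telescoping sum is unfolded.

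It remains to establish the asymptotic smallness of the remainder and to normalize the parameters. From the $\gamma=\gamc$ expansion and the uniform $H^1$-bound I obtain $\sum_{j\ge1}\|\psi^j\|^2_{\dot H^{\gamc}}\le\limsup_n\|\phi_n\|^2_{\dot H^{\gamc}}<\infty$, whence $\|\psi^J\|_{\dot H^{\gamc}}\to0$ as $J\to\infty$. Feeding this back through the lower bound $\|\psi^J\|_{\dot H^{\gamc}}\gtrsim_A\eta\big(\limsup_n\|e^{it\Delta}W^{J-1}_n\|_{L^\infty_t L^{2N/(N-2\gamc)}_x}\big)$ forces $\limsup_n\|e^{it\Delta}W^J_n\|_{L^\infty_t L^{2N/(N-2\gamc)}_x}\to0$. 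To upgrade this endpoint smallness to the full norm $S(\dot H^{\gamc})$, I would interpolate: for any $(q,r)\in\Ac_{\gamc}$ with $q<\infty$ one controls $\|e^{it\Delta}W^J_n\|_{L^q_t L^r_x}$ by a positive power of the (small) endpoint norm times the uniformly bounded factor $\|e^{it\Delta}W^J_n\|_{S(\dot H^{\gamc})}\lesssim\|W^J_n\|_{\dot H^{\gamc}}\le A$, which yields the stated vanishing. Finally, passing to a further subsequence, each time shift $t^j_n$ either stays bounded, in which case it converges and $e^{-it^j_n\Delta}\psi^j$ may be redefined as a new profile with $t^j_n\equiv0$, or else $|t^j_n|\to\infty$ and one arranges $t^j_n\to\pm\infty$; likewise each $x^j_n$ is reduced either to $0$ or to $|x^j_n|\to\infty$. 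This produces the final normalization and completes the scheme.
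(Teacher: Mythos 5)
First, a point of orientation: the paper itself contains \emph{no proof} of Lemma \ref{lem-line-prof}; it is imported wholesale from \cite{FXC, Guevara, FG-JDE, FG-BBMS}, so your proposal can only be measured against the standard arguments in those references. At the level of architecture your plan coincides with theirs: an inverse-Strichartz/concentration lemma keyed on the forbidden endpoint norm $L^\infty_t L^{\frac{2N}{N-2\gamc}}_x$, inductive bubble extraction with a diagonal subsequence, pairwise divergence \eqref{diver-proper} via vanishing weak limits, Pythagorean expansions in $\dot H^\gamma$ from weak convergence plus \eqref{diver-proper}, and the final normalization of the shifts. So the strategy is the right one.

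Two steps, however, are genuinely incomplete. First, the inverse Strichartz lemma that you defer is not a technicality but the entire mathematical content of the result: in its execution the refined Sobolev/Besov inequality produces a concentration \emph{frequency} $2^{j_n}$ in addition to $(t_n,x_n)$, and one must use both ends of the $H^1$ bound (the $L^2$ part to rule out $j_n\to-\infty$, the $\dot H^1$ part to rule out $j_n\to+\infty$) to confine $j_n$ to a bounded set before a translation-only profile with $\|\psi\|_{\dot H^{\gamc}}\gtrsim\eta(\varepsilon)$ can be extracted; without this step a decomposition with only time and space shifts is simply false, and your parenthetical remark about the $H^1$ bound does not substitute for it. Second --- and this is a step that fails as written --- your interpolation upgrade does not prove the stated vanishing of $\|e^{it\Delta}W^J_n\|_{S(\dot H^{\gamc})}$, because $S(\dot H^{\gamc})$ is a \emph{supremum} over the whole family \eqref{eq:cond-r}. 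For each fixed pair $(q,r)\in\Ac_{\gamc}$ you indeed obtain a bound $\delta^{1-\theta}A^{\theta}$ with $\theta=\theta(q,r)\in(0,1)$, but since your bounded factor lives on pairs inside the family, necessarily $\theta(q,r)\to 1$ as $r\uparrow \frac{2N}{N-2}$ (for $N\geq 3$), so the supremum of your bounds tends to $A$, not to $0$: pair-by-pair vanishing does not give vanishing of the sup. The repair for $N\geq 3$ is to observe that, because $\gamc>0$, the Sobolev-plus-Strichartz estimate $\|e^{it\Delta}f\|_{L^q_tL^r_x}\lesssim\|f\|_{\dot H^{\gamc}}$ remains valid for pairs on the same scaling line with $\frac{2N}{N-2}\leq r<\frac{2N}{N-2-2\gamc}$ (no upper restriction when $N-2-2\gamc\leq 0$), i.e.\ slightly \emph{outside} the family; interpolating every pair of $\Ac_{\gamc}$ between the $L^\infty_tL^{\frac{2N}{N-2\gamc}}_x$ endpoint and one such fixed exterior pair keeps the exponent uniformly bounded away from $1$ and yields smallness of the supremum. (For $N=2$, where the family allows all finite $r$, this trick is unavailable and large $r$ must be handled separately, using the full $H^1$ bound rather than $\dot H^{\gamc}$.) As it stands, then, your proposal is the correct strategy, faithful to the cited literature, but it is a plan with a genuine hole rather than a proof.
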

	
	
	In the next lemmas, we will construct nonlinear profiles associated to the linear ones with either divergent time or divergent space shifts.
	
	\begin{lemma}[Nonlinear profile with divergent time shift and no space translation] \label{lem-non-prof-dive-time}
		Let $N\geq 2$, $0<b<\min\left\{2,\frac{N}{2}\right\}$, and $\frac{4-2b}{N}<\alpha<\alpha(N)$. Let $\psi \in H^1$ and $t_n\rightarrow \infty$. Let $v_n:C((-T_*,T^*),H^1)$ denote the maximal solution to \eqref{eq:inls} with initial data
		\begin{align} \label{defi:vn-0}
		v_n(0,x) = e^{-it_n \Delta} \psi(x).
		\end{align}
		Then for $n$ sufficiently large, $v_n$ exists globally backward in time, i.e., $T_*=\infty$. Moreover, we have for any $0\leq T<T^*$, 
		\begin{align} \label{small-diff-vn}
		\lim_{n\rightarrow \infty} \|\scal{\nabla}(v_n - \psi_n)\|_{S((-\infty,T), L^2)} + \|v_n-\psi_n\|_{S((-\infty,T),\dot{H}^{\gamc})} =0,
		\end{align}
		where
		\begin{align} \label{defi:psi-n}
		\psi_n(t,x):= e^{i(t-t_n)\Delta} \psi(x).
		\end{align}
		In addition, we have
		\begin{align} \label{H1-norm-prof}
		\lim_{n\rightarrow \infty} \|v_n- \psi_n\|_{L^\infty_t((-\infty,T), H^1_x)} =0. 
		\end{align}
		Similarly, if $t_n \rightarrow -\infty$ and $v_n:C((-T_*,T^*), H^1)$ is the maximal solution to \eqref{eq:inls} with initial data \eqref{defi:vn-0}, then for $n$ sufficiently large, $v_n$ exists globally forward in time, i.e., $T^*=\infty$. Moreover, we have for any $0 \leq T < T_*$, 
		\[
		\lim_{n\rightarrow \infty} \|\scal{\nabla}(v_n-\psi_n)\|_{S((-T,\infty),L^2)} + \|v_n-\psi_n\|_{S((-T,\infty),\dot{H}^{\gamc})} =0,
		\]
		where $\psi_n$ is as in \eqref{defi:psi-n}. Moreover,
		\[
		\lim_{n\rightarrow \infty} \|v_n- \psi_n\|_{L^\infty_t((-T, \infty), H^1_x)} =0.
		\]
	\end{lemma}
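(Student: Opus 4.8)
The plan is to treat $\psi_n$ as an approximate solution of \eqref{eq:inls} on a half-line and to transfer the comparison to the exact solution $v_n$ via the stability result of Lemma \ref{lem-sta}. Since $\psi_n(t)=e^{i(t-t_n)\Delta}\psi$ is a free evolution and, by \eqref{defi:vn-0}--\eqref{defi:psi-n}, satisfies $\psi_n(0)=e^{-it_n\Delta}\psi=v_n(0)$, it solves
\[
i\partial_t\psi_n+\Delta\psi_n+|x|^{-b}|\psi_n|^\alpha\psi_n=e_n,\qquad e_n:=|x|^{-b}|\psi_n|^\alpha\psi_n,
\]
with the same data as $v_n$, so that the two initial-data perturbations required by Lemma \ref{lem-sta} vanish identically.

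The key input is a dispersive smallness of $\psi_n$ in the scattering norm. For any fixed finite $T$, the substitution $s=t-t_n$ maps $(-\infty,T]$ onto $(-\infty,T-t_n]$, whose right endpoint tends to $-\infty$. Since Proposition \ref{prop-stri-est} gives $\|e^{is\Delta}\psi\|_{S(\dot{H}^{\gamc})}\lesssim\|\psi\|_{\dot{H}^{\gamc}}<\infty$, the contribution of a receding interval vanishes, i.e.
\[
\|\psi_n\|_{S((-\infty,T],\dot{H}^{\gamc})}=\|e^{is\Delta}\psi\|_{S((-\infty,T-t_n],\dot{H}^{\gamc})}\longrightarrow 0\quad\text{as }n\to\infty;
\]
the supremum over admissible pairs is handled by the usual reduction to Schwartz data, for which dispersive decay gives a uniform tail bound. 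On the other hand $\|\psi_n\|_{L^\infty_tH^1_x}=\|\psi\|_{H^1}$, while $\|\psi_n\|_{S((-\infty,T],L^2)}$ and $\|\nabla\psi_n\|_{S((-\infty,T],L^2)}$ stay bounded by $\|\psi\|_{H^1}$ through Strichartz on $L^2$-admissible pairs. Applying the three estimates of Lemma \ref{lem-non-est} with $u=v=\psi_n$, each bound on $e_n$ contains a factor $\|\psi_n\|_{S(\dot{H}^{\gamc})}^{\alpha-\theta}$ or $\|\psi_n\|_{S(\dot{H}^{\gamc})}^{\alpha+1-\theta}$, whence
\[
\|\scal{\nabla}e_n\|_{S'((-\infty,T],L^2)}+\|e_n\|_{S'((-\infty,T],\dot{H}^{-\gamc})}\longrightarrow 0\quad\text{as }n\to\infty.
\]

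I would then first take $T=0$ and apply Lemma \ref{lem-sta} on $I=(-\infty,0]$: for $n$ large the error lies below the threshold $\vareps_1$ and, the data being equal, the lemma produces an exact solution on $(-\infty,0]$ which by uniqueness coincides with $v_n$; this yields $T_*=\infty$. Fixing next $0\le T<T^*$, the solution $v_n$ is defined on all of $(-\infty,T]$, and Lemma \ref{lem-sta} on $I=(-\infty,T]$ gives $\|v_n-\psi_n\|_{S((-\infty,T],\dot{H}^{\gamc})}\to0$ together with the uniform bounds $\|v_n\|_{L^\infty_tH^1_x}+\|\scal{\nabla}v_n\|_{S((-\infty,T],L^2)}\le C$; in particular $\|v_n\|_{S((-\infty,T],\dot{H}^{\gamc})}\le\|v_n-\psi_n\|_{S}+\|\psi_n\|_{S}\to0$. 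To reach the gradient-level conclusions \eqref{small-diff-vn} and \eqref{H1-norm-prof}, which stability does not give directly, I would insert these facts into the Duhamel identity $v_n-\psi_n=i\int_0^te^{i(t-s)\Delta}|x|^{-b}|v_n|^\alpha v_n\,ds$ (valid because $v_n(0)=\psi_n(0)$) and estimate the right-hand side by Proposition \ref{prop-stri-est} and Lemma \ref{lem-non-est} with $u=v=v_n$; the resulting bounds all carry the vanishing factor $\|v_n\|_{S(\dot{H}^{\gamc})}^{\alpha-\theta}$ against the bounded quantities $\|v_n\|_{L^\infty_tH^1_x}$ and $\|\scal{\nabla}v_n\|_{S(L^2)}$, giving \eqref{small-diff-vn} and \eqref{H1-norm-prof} at once. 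The case $t_n\to-\infty$ follows by reversing time.

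I expect the main obstacle to be twofold. First, one must make rigorous the vanishing of $\|\psi_n\|_{S((-\infty,T],\dot{H}^{\gamc})}$ uniformly in the supremum defining the Strichartz norm; this is where the reduction to Schwartz data and the dispersive estimate enter. Second, one has to keep the argument free of circularity around the maximal forward time: backward global existence must be secured first on $(-\infty,0]$, and only afterwards, with $v_n$ known to exist on $(-\infty,T]$ for $T<T^*$, may stability and the Duhamel estimate be invoked there to produce the quantitative comparison.
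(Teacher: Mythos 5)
Your proposal is correct and follows essentially the same route as the paper's proof: treat $\psi_n$ as an approximate solution with error $e_n=|x|^{-b}|\psi_n|^\alpha\psi_n$ and identical initial data, show the error vanishes in the dual Strichartz norms by applying Lemma \ref{lem-non-est} on the receding intervals $(-\infty,T-t_n)$ (precisely the point where the exclusion of the infinite-time-exponent pairs matters), and conclude via the stability Lemma \ref{lem-sta} together with a Duhamel estimate carrying the vanishing factor $\|v_n\|^{\alpha-\theta}_{S(\dot{H}^{\gamc})}$. Your additional care -- securing backward global existence first on $(-\infty,0]$, and observing that stability alone yields only the $\dot{H}^{\gamc}$-level difference so that \eqref{small-diff-vn} and \eqref{H1-norm-prof} must come from the Duhamel identity with Lemma \ref{lem-non-est} -- is exactly how the paper completes the argument when it proves \eqref{H1-norm-prof}.
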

	
	\begin{proof}
		We only treat the first point, the second point is similar. We see that $\psi_n$ satisfies
		\[
		i\partial_t \psi_n + \Delta \psi_n + |x|^{-b} |\psi_n|^\alpha \psi_n = e_n
		\]
		with $e_n:= |x|^{-b} |\psi_n|^\alpha \psi_n$. Since $v_n(0) = \psi_n(0)$, the result follows from the stability given in Lemma \ref{lem-sta} provided that
		\begin{align} \label{est-error}
		\lim_{n\rightarrow \infty} \|\scal{\nabla} e_n\|_{S'((-\infty,T),L^2)} + \|e_n\|_{S'((-\infty,T),\dot{H}^{-\gamc})} =0.
		\end{align}
		By Lemma \ref{lem-non-est}, we have
		\begin{align*}
		\|\scal{\nabla} e_n\|_{S'((-\infty,T),L^2)} &= \|\scal{\nabla} (|x|^{-b} |\psi_n|^\alpha \psi_n)\|_{S'((-\infty,T),L^2)} \\
		&= \|\scal{\nabla} (|x|^{-b} |e^{it\Delta} \psi|^\alpha e^{it\Delta} \psi) \|_{S'((-\infty, T-t_n), L^2)} \\
		&\lesssim \|e^{it\Delta} \psi\|^\theta_{L^\infty_t((-\infty,T-t_n),H^1_x)} \|e^{it\Delta} \psi\|^{\alpha-\theta}_{S((-\infty,T-t_n),\dot{H}^{\gamc})} \\
		&\mathrel{\phantom{\lesssim \|e^{it\Delta} \psi\|^\theta_{L^\infty_t((-\infty,T-t_n),H^1_x)} }} \times \|\scal{\nabla} e^{it\Delta} \psi\|_{S((-\infty,T-t_n),L^2)} \rightarrow 0
		\end{align*}
		as $n\rightarrow\infty$ as $\scal{\nabla} e^{it\Delta} \psi \in S(L^2)$ and $e^{it\Delta} \psi \in S(\dot{H}^{\gamc})$. Here we do not include the pairs $(\infty, 2)$ and $\left(\infty, \frac{2N}{N-2\gamc}\right)$ into the definitions of $L^2$ and $\dot{H}^{\gamc}$ admissibility, respectively. Similarly, we have
		\begin{align*}
		\|e_n\|_{S'((-\infty,0),\dot{H}^{-\gamc})} &= \||x|^{-b} |e^{it\Delta} \psi|^\alpha e^{it\Delta} \psi\|_{S'((-\infty,T-t_n),\dot{H}^{-\gamc})} \\
		&\lesssim \|e^{it\Delta}\psi\|^\theta_{L^\infty_t((-\infty,T-t_n),H^1_x)} \|e^{it\Delta} \psi\|^{\alpha+1-\theta}_{S((-\infty,T-t_n),\dot{H}^{\gamc})} \rightarrow 0
		\end{align*}
		as $n\rightarrow \infty$. This shows \eqref{est-error}.
		
		We next show \eqref{H1-norm-prof}. To see this, we have from \eqref{small-diff-vn},
		\[
		\|\scal{\nabla} \psi_n\|_{S((-\infty,T), L^2)} = \|\scal{\nabla} e^{it\Delta} \psi\|_{S((-\infty, T-t_n), L^2)} \rightarrow 0 \text{ as } n \rightarrow \infty,
		\]
		and similarly for $\|\psi_n\|_{S((-\infty, T), \dot{H}^{\gamc})}$ that 
		\[
		\lim_{n\rightarrow \infty} \|\scal{\nabla} v_n\|_{S((-\infty,T), L^2)} + \|v_n\|_{S((-\infty, T), \dot{H}^{\gamc})} =0.
		\]
		This together with Strichartz estimates, Lemma \ref{lem-non-est}, and the fact that $\psi_n(t,x) = e^{it\Delta} v_n(0,x)$ imply $\|v_n\|_{L^\infty_t ((-\infty,T), H^1_x)} \lesssim 1$. By Lemma \ref{lem-non-est}, we have
		\[
		\|v_n- \psi_n\|_{L^\infty_t((-\infty, T), H^1_x)} \lesssim  \|v_n\|^\theta_{L^\infty_t((-\infty, T),H^1_x)} \|v_n\|^{\alpha-\theta}_{S((-\infty,T), \dot{H}^{\gamc})} \|\scal{\nabla} v_n\|_{S((-\infty, T), L^2)} \rightarrow 0
		\]
		as $n\rightarrow \infty$. The proof is complete.
	\end{proof}
	
	\begin{lemma}[Nonlinear profile with divergent space shift] \label{lem-non-prof-dive-spac}
		Let $N\geq 2$, $0<b<\min\left\{2,\frac{N}{2}\right\}$, and $\frac{4-2b}{N}<\alpha<\alpha(N)$. Let $\psi \in H^1$ and $(t_n, x_n) \in \R \times \R^N$ satisfying $|x_n|\rightarrow \infty$ as $n\rightarrow \infty$. Let $v_n:C((-T_*,T^*),H^1)$ denote the maximal solution to \eqref{eq:inls} with initial data
		\begin{align} \label{defi:vn-0-xn}
		v_n(0,x) = e^{-it_n \Delta} \psi(x-x_n).
		\end{align}
		Then for $n$ sufficiently large, $v_n$ exists globally in time, i.e., $T_*=T^*=\infty$. Moreover, we have 
		\[
		\lim_{n\rightarrow \infty} \|\scal{\nabla}(v_n - \psi_n)\|_{S(L^2)} + \|v_n-\psi_n\|_{S(\dot{H}^{\gamc})} =0,
		\]
		where
		\begin{align} \label{defi:psi-n-xn}
		\psi_n(t,x):= e^{i(t-t_n)\Delta} \psi(x-x_n).
		\end{align}
	\end{lemma}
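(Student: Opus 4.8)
The plan is to reduce everything to a single smallness statement about the error term and then exploit the divergence $|x_n|\to\infty$ through a spatial decomposition. As in the proof of Lemma \ref{lem-non-prof-dive-time}, the function $\psi_n$ of \eqref{defi:psi-n-xn} solves
\[
i\partial_t\psi_n+\Delta\psi_n+|x|^{-b}|\psi_n|^\alpha\psi_n=e_n,\qquad e_n:=|x|^{-b}|\psi_n|^\alpha\psi_n,
\]
with $v_n(0)=\psi_n(0)$. Since $e^{it\Delta}\psi\in S(\dot H^{\gamc})$ and $\scal{\nabla}e^{it\Delta}\psi\in S(L^2)$, translation invariance makes $\|\psi_n\|_{L^\infty_t(\R,H^1_x)}$ and $\|\psi_n\|_{S(\R,\dot H^{\gamc})}$ bounded uniformly in $n$. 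Applying the stability result Lemma \ref{lem-sta} on $I=\R$ (with $M'=0$, as the data coincide), the full conclusion—global existence $T_*=T^*=\infty$ for large $n$ and the vanishing of $\scal{\nabla}(v_n-\psi_n)$ in $S(L^2)$ and of $v_n-\psi_n$ in $S(\dot H^{\gamc})$—will follow once I establish
\begin{align}\label{plan:err}
\lim_{n\to\infty}\Big(\|\scal{\nabla}e_n\|_{S'(L^2)}+\|e_n\|_{S'(\dot H^{-\gamc})}\Big)=0.
\end{align}
This is the only place where $|x_n|\to\infty$ is used.

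To prove \eqref{plan:err} I would split physical space into a bulk around the moving center $x_n$ and a complementary tail, by means of a smooth cutoff $\chi_R(x)=\chi\big((x-x_n)/R\big)$ with $\chi\equiv1$ on $\{|x|\le1\}$ and $\supp\chi\subset\{|x|\le2\}$, where $R>0$ is fixed and sent to infinity only at the very end. The two regions carry complementary smallness mechanisms. On the bulk $\{|x-x_n|\le2R\}$ one has $|x|\ge|x_n|-2R$, so both $|x|^{-b}$ and $|x|^{-b-1}$ are bounded by $C(|x_n|-2R)^{-b}$ and $C(|x_n|-2R)^{-b-1}$; treating $\chi_R|x|^{-b}$ as a small bounded multiplier and applying the homogeneous ($b=0$, $\theta=0$) case of Lemma \ref{lem-non-est} to $|\psi_n|^\alpha\psi_n$, the bulk contribution to \eqref{plan:err} is $\lesssim(|x_n|-2R)^{-b}$ times uniformly bounded Strichartz norms, hence tends to $0$ as $n\to\infty$ for each fixed $R$. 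On the tail $\{|x-x_n|\ge R\}$—which, for $|x_n|>2R$, contains the origin and thus all the singularity of the weight—I would instead run the estimates of Lemma \ref{lem-non-est} verbatim, splitting $(1-\chi_R)|x|^{-b}$ in $L^{r_1}(B)$ and $L^{r_1}(B^c)$ exactly as in its proof: the weight constants are $n$-independent, while every factor carrying $\psi_n$ is restricted to $\{|x-x_n|\ge R\}$, i.e.\ to $\{|y|\ge R\}$ after $y=x-x_n$. By translation invariance these reduce to the tails $\|e^{it\Delta}\psi\|_{S(\{|y|\ge R\})}$ and $\|\scal{\nabla}e^{it\Delta}\psi\|_{S(\{|y|\ge R\})}$, which vanish as $R\to\infty$ since the corresponding global norms are finite. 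A standard $\varepsilon$-argument—first fix $R$ so that the tail is $<\varepsilon$, then let $n\to\infty$ so that the bulk is $<\varepsilon$—then yields \eqref{plan:err}.

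The step I expect to be the main obstacle is the term of $\scal{\nabla}e_n$ in which the gradient falls on the singular weight inside the tail region. A crude use of Hardy's inequality, as in the proof of Lemma \ref{lem-non-est}, would return the \emph{global} norm $\|\nabla(|\psi_n|^\alpha\psi_n)\|$ and thereby destroy the tail localization, leaving an $O(1)$ bound. To circumvent this I would apply the product rule \eqref{eq:obser} to the \emph{already localized} function $g_n:=(1-\chi_R)|\psi_n|^\alpha\psi_n$, writing the dangerous piece as $|x|^{-b}\big(|x|^{-1}g_n\big)$ and invoking Hardy in the form $\||x|^{-1}g_n\|_{L^{r_2}_x}\le C\|\nabla g_n\|_{L^{r_2}_x}$ for this $g_n$. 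Since $\nabla g_n=(1-\chi_R)\nabla(|\psi_n|^\alpha\psi_n)-(\nabla\chi_R)|\psi_n|^\alpha\psi_n$ is supported in $\{|x-x_n|\ge R\}$, the resulting bound again involves only tail norms of $\psi_n$ and is genuinely small; the $\dot H^{-\gamc}$ estimate and the non-derivative part of the $L^2$ estimate are analogous and do not even require Hardy. Combining the bulk and tail bounds proves \eqref{plan:err}, and Lemma \ref{lem-sta} completes the argument.
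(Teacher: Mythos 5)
Your proposal is correct and takes essentially the same approach as the paper: reduce via the stability result (Lemma \ref{lem-sta}) to the smallness of $\|\scal{\nabla}e_n\|_{S'(L^2)}+\|e_n\|_{S'(\dot{H}^{-\gamc})}$, then decompose space into a bulk around the profile center, where $|x|^{-b}\lesssim |x_n|^{-b}$ turns the weight into a vanishing multiplier and the $b=0$ case of Lemma \ref{lem-non-est} applies, and a complementary tail where the finiteness of the global Strichartz norms of $e^{it\Delta}\psi$ yields smallness as $R\to\infty$ (in the paper this is the split into $B_{R_0}$ and $B_{R_0}^c$ in translated coordinates, with the sets $\Omega_1,\Omega_2$ playing the role of your $L^{r_1}(B)/L^{r_1}(B^c)$ splitting of the weight). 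Your handling of the gradient term on the tail --- applying \eqref{eq:obser} and Hardy's inequality to the localized function $g_n=(1-\chi_R)|\psi_n|^\alpha\psi_n$ rather than to $|\psi_n|^\alpha\psi_n$ globally, so that the resulting bound retains the tail localization --- is a careful and valid way of making precise the step the paper compresses into ``the same argument as in the proof of Lemma \ref{lem-non-est} implies.''
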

	
	\begin{remark}
		The construction of nonlinear profiles with divergent space translations was first established by Miao, Murphy, and Zheng \cite{MMZ} for \eqref{eq:inls} with $\alpha =2$ and $N=3$. This result was recently extended to \eqref{eq:inls} with $N\geq 2$ by Cardoso, Farah, Guzm\'an, and Murphy \cite{CFGM}. Here we give a refine result with a simple proof compared to the ones in \cite{MMZ, CFGM}. More precisely, for a linear profile with a divergent space shift, the associated nonlinear profile is close to the solution of the underlying linear Schr\"odinger equation.
	\end{remark}

	\noindent {\it Proof of Lemma \ref{lem-non-prof-dive-spac}.}
	As in the proof of Lemma \ref{lem-non-prof-dive-time}, it suffices to show
	\begin{align} \label{est-error-xn}
	\lim_{n\rightarrow \infty} \|\scal{\nabla} e_n\|_{S'(L^2)} + \|e_n\|_{S'(\dot{H}^{-\gamc})} =0.
	\end{align}
	To see this, we take $\vareps>0$. We have
	\begin{align*}
	\|\scal{\nabla} e_n\|_{S'(L^2)} &= \|\scal{\nabla}(|x|^{-b} |\psi_n|^\alpha \psi_n)\|_{S'(L^2)} \\
	&=\|\scal{\nabla} (|x+x_n|^{-b} |e^{it\Delta} \psi|^\alpha e^{it\Delta} \psi)\|_{S'(L^2)} \\
	&\leq \|\scal{\nabla} (|x+x_n|^{-b} |e^{it\Delta} \psi|^\alpha e^{it\Delta} \psi)\|_{S'(L^2(B_R))} \\
	&\mathrel{\phantom{\leq}} + \|\scal{\nabla} (|x+x_n|^{-b} |e^{it\Delta} \psi|^\alpha e^{it\Delta} \psi)\|_{S'(L^2(B_R^c))},  
	\end{align*}
	where $B_R:= \left\{x \in \R^N \ : \ |x| \leq R\right\}$ and $B_R^c= \R^N \backslash B_R$ with $R>0$ to be chosen later. 
	
	On $B_R^c$, by splitting $B_R^c = \Omega_1 \cup \Omega_2$ with
	\[
	\Omega_1 = \left\{x \in \R^N \ :\ |x| \geq R, |x+x_n| \leq 1\right\}, \quad \Omega_2 = \left\{x \in \R^N \ :\ |x| \geq R, |x+x_n| \geq 1\right\},
	\]
	the same argument as in the proof of Lemma \ref{lem-non-est} implies that
	\begin{align*}
	\|\scal{\nabla}(|x+x_n|^{-b} |\varphi|^\alpha \varphi)\|_{S'(L^2(B^c_R))} \lesssim \|\varphi\|^\theta_{L^\infty_t H^1_x(B^c_R)} \|\varphi\|^{\alpha-\theta}_{S(\dot{H}^{\gamc}(B_R^c))} \|\scal{\nabla} \varphi\|_{S(L^2(B^c_R))},
	\end{align*}
	where 
	\[
	\varphi(t,x):= e^{it\Delta} \psi(x).
	\]
	As $\varphi \in S(\dot{H}^{\gamc})$ and $\scal{\nabla} \varphi \in S(L^2)$, we see that 
	\[
	\|\varphi\|_{S(\dot{H}^{\gamc}(B_R^c))}, \|\scal{\nabla} \varphi\|_{S(L^2(B^c_R))} \rightarrow 0 \text{ as } R\rightarrow \infty.
	\]
	Note that it is crucial to exclude the pairs $(\infty,2)$ and $\left(\infty, \frac{2N}{N-2\gamc}\right)$ from the definitions of $L^2$ and $\dot{H}^{\gamc}$ admissible conditions, respectively. This shows that for $R_0>0$ sufficiently large,
	\[
	\|\scal{\nabla}(|x+x_n|^{-b} |\varphi|^\alpha \varphi)\|_{S'(L^2(B_{R_0}^c))} <\frac{\vareps}{4}
	\]
	for all $n\geq 1$.
	
	Next, for $x \in B_{R_0}$, as $|x_n| \rightarrow \infty$, we have $|x+x_n| \geq |x_n|-|x| \geq \frac{|x_n|}{2}$ for $n$ sufficiently large. It follows from Lemma \ref{lem-non-est} that
	\begin{align*}
	\||x+x_n|^{-b} |\varphi|^\alpha \varphi \|_{S'(L^2(B_{R_0}))} \lesssim |x_n|^{-b} \||\varphi|^\alpha \varphi\|_{S'(L^2)} \lesssim |x_n|^{-b} \|\varphi\|^\alpha_{S(\dot{H}^{\gamc})} \|\varphi\|_{S(L^2)} \rightarrow 0
	\end{align*}
	as $n\rightarrow \infty$. Similarly, we have
	\begin{align*}
	\|\nabla(|x+x_n|^{-b} |\varphi|^\alpha \varphi)&\|_{S'(L^2(B_{R_0}))} \\
	&\lesssim \||x+x_n|^{-b} \nabla(|\varphi|^\alpha \varphi)\|_{S'(L^2(B_{R_0}))} + \||x+x_n|^{-b-1} |\varphi|^\alpha \varphi\|_{S'(L^2(B_{R_0}))} \\
	&\lesssim |x_n|^{-b} \|\nabla(|\varphi|^\alpha \varphi)\|_{S'(L^2)} + |x_n|^{-b-1} \||\varphi|^\alpha \varphi\|_{S'(L^2)} \\
	&\lesssim |x_n|^{-b} \|\varphi\|^\alpha_{S(\dot{H}^{\gamc})} \|\nabla \varphi\|_{S(L^2)}  +|x_n|^{-b-1} \|\varphi\|^\alpha_{S(\dot{H}^{\gamc})} \|\varphi\|_{S(L^2)} \rightarrow 0
	\end{align*}
	as $n\rightarrow \infty$. Thus there exists $n_1>0$ sufficiently large such that for all $n\geq n_1$,
	\[
	\|\scal{\nabla}(|x+x_n|^{-b} |\varphi|^\alpha \varphi)\|_{S'(L^2(B_{R_0}))} <\frac{\vareps}{4},
	\]
	hence
	\[
	\|\scal{\nabla}(|x+x_n|^{-b} |\varphi|^\alpha \varphi)\|_{S'(L^2)} <\frac{\vareps}{2}.
	\]
	A similar argument show that for all $n\geq n_2$ with $n_2>0$ sufficiently large,
	\[
	\||x+x_n|^{-b} |\varphi|^\alpha \varphi\|_{S'(\dot{H}^{-\gamc})} <\frac{\vareps}{2}.
	\]
	Therefore, we have for all $n\geq \max \{n_1, n_2\}$,
	\[
	\|\scal{\nabla}(|x+x_n|^{-b} |\varphi|^\alpha \varphi)\|_{S'(L^2)} + \||x+x_n|^{-b} |\varphi|^\alpha \varphi\|_{S'(\dot{H}^{-\gamc})} <\vareps
	\]
	which proves \eqref{est-error-xn}. The proof is complete.
	\hfill $\Box$
	
	
	
	\subsection{Energy scattering}
	In this section, we give the proof of the scattering criterion given in Theorem \ref{theo-scat-crite}. To this end, we need the following coercivity lemma.
	
	\begin{lemma} \label{lem-coer}
		Let $N\geq 1$, $0<b<\min \left\{2, N\right\}$, and $\frac{4-2b}{N}<\alpha<\alpha(N)$. Let $f \in H^1$ satisfy 
		\begin{align} \label{eq:A}
		P(f) [M(f)]^{\sigc} \leq A < P(Q) [M(Q)]^{\sigc}
		\end{align}
		for some constant $A>0$. Then there exists $\nu = \nu (A, Q)>0$ such that
		\begin{align} 
		G(f) &\geq \nu \|\nabla f\|^2_{L^2}, \label{eq:coer-G} \\
		E(f) &\geq \frac{\nu}{2} \|\nabla f\|^2_{L^2}. \label{eq:coer-E}
		\end{align}
	\end{lemma}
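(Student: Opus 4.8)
The plan is to reduce both claimed inequalities to a single scale-invariant estimate and then exploit that the hypothesis \eqref{eq:A} supplies \emph{two} competing upper bounds on the renormalized potential $P(f)[M(f)]^{\sigc}$. Throughout I would set
$$a := \frac{N\alpha+2b}{2}, \qquad X := \|\nabla f\|_{L^2}\|f\|^{\sigc}_{L^2}, \qquad X_Q := \|\nabla Q\|_{L^2}\|Q\|^{\sigc}_{L^2},$$
noting that $a>2$ in the intercritical range, since $\alpha>\frac{4-2b}{N}$ is equivalent to $N\alpha+2b>4$.

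First I would record the algebra. The definition \eqref{eq:sigc} of $\sigc$ gives at once the exponent identity $\frac{4-2b-(N-2)\alpha}{2}=(a-2)\sigc$, whence $\frac{4-2b-(N-2)\alpha}{2}+2\sigc=a\sigc$; multiplying \eqref{eq:GN-ineq} by $[M(f)]^{\sigc}=\|f\|^{2\sigc}_{L^2}$ then puts the Gagliardo--Nirenberg inequality into the scale-invariant form $P(f)[M(f)]^{\sigc}\leq C_{\opt}X^{a}$. By \eqref{eq:opt-cons} one has $\frac{N\alpha+2b}{2(\alpha+2)}C_{\opt}=X_Q^{-(a-2)}$, and evaluating the previous display at $f=Q$ (where \eqref{eq:GN-ineq} is an equality) gives $P(Q)[M(Q)]^{\sigc}=C_{\opt}X_Q^{a}$. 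Finally, since $\|\nabla f\|^2_{L^2}[M(f)]^{\sigc}=X^2$, multiplying \eqref{eq:G} by $[M(f)]^{\sigc}$ yields
$$G(f)[M(f)]^{\sigc}=X^2-\frac{N\alpha+2b}{2(\alpha+2)}P(f)[M(f)]^{\sigc},$$
so that, as $[M(f)]^{\sigc}>0$, proving \eqref{eq:coer-G} amounts to showing $\frac{N\alpha+2b}{2(\alpha+2)}P(f)[M(f)]^{\sigc}\leq(1-\nu)X^2$.

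The conceptual point---and the only nonroutine step---is that \eqref{eq:A} does \emph{not} control $X$ on its own: a concentrated or oscillatory $f$ can make $X$ large while keeping $P(f)[M(f)]^{\sigc}$ small, so one cannot deduce $X<X_Q$ and run a naive Gagliardo--Nirenberg argument. Instead I would combine the two upper bounds $P(f)[M(f)]^{\sigc}\leq A$ and $P(f)[M(f)]^{\sigc}\leq C_{\opt}X^a$ by introducing the threshold $X_1:=(A/C_{\opt})^{1/a}$, which satisfies $X_1<X_Q$ because $A<P(Q)[M(Q)]^{\sigc}=C_{\opt}X_Q^a$. For $X\leq X_1$ I would use Gagliardo--Nirenberg,
$$\frac{N\alpha+2b}{2(\alpha+2)}P(f)[M(f)]^{\sigc}\leq X_Q^{-(a-2)}X^a=\left(\frac{X}{X_Q}\right)^{a-2}X^2\leq\left(\frac{X_1}{X_Q}\right)^{a-2}X^2,$$
while for $X>X_1$ I would use the hypothesis $P(f)[M(f)]^{\sigc}\leq A=C_{\opt}X_1^a$,
$$\frac{N\alpha+2b}{2(\alpha+2)}P(f)[M(f)]^{\sigc}\leq X_Q^{-(a-2)}X_1^a=\left(\frac{X_1}{X_Q}\right)^{a-2}X_1^2<\left(\frac{X_1}{X_Q}\right)^{a-2}X^2.$$
Either way the same bound holds with $\nu:=1-(X_1/X_Q)^{a-2}$, which is positive since $X_1<X_Q$ and $a>2$ and depends only on $A$ and $Q$; the scale-invariant identity then gives $G(f)[M(f)]^{\sigc}\geq\nu X^2$, i.e.\ \eqref{eq:coer-G}.

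It remains to deduce \eqref{eq:coer-E}, which I would get by comparing $E$ and $G$ directly: subtracting the definitions of $E$ and $G$ yields
$$E(f)-\tfrac{1}{2}G(f)=\frac{N\alpha+2b-4}{4(\alpha+2)}P(f),$$
and since $N\alpha+2b>4$ and $P(f)\geq 0$ this forces $E(f)\geq\tfrac{1}{2}G(f)\geq\frac{\nu}{2}\|\nabla f\|^2_{L^2}$. I expect the splitting at $X_1$ to be the only step requiring an idea; everything else is a direct computation, and the argument uses only \eqref{eq:GN-ineq}, so it is valid in the full range $N\geq 1$.
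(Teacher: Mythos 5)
Your proof is correct, and it runs on the same fuel as the paper's: the Gagliardo--Nirenberg inequality \eqref{eq:GN-ineq} in scale-invariant form, the identification $P(Q)[M(Q)]^{\sigc}=C_{\opt}X_Q^a$ coming from \eqref{eq:poho-iden} and \eqref{eq:opt-cons}, and the hypothesis \eqref{eq:A}; the only real difference is the mechanism by which the two competing bounds $p:=P(f)[M(f)]^{\sigc}\leq C_{\opt}X^a$ and $p\leq A$ are merged. You split into cases at the crossover $X_1=(A/C_{\opt})^{1/a}$ and use whichever bound is stronger, while the paper avoids any case analysis by interpolating multiplicatively: it writes $[P(f)]^{a/2}=P(f)\cdot [P(f)]^{(a-2)/2}$, applies \eqref{eq:GN-ineq} to the first factor, absorbs the leftover power together with the mass into $p^{(a-2)/2}$, bounds that by the hypothesis, and solves for $P(f)$. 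The two routes are equivalent here: both produce exactly $\frac{N\alpha+2b}{2(\alpha+2)}P(f)\leq \left(A/\left(P(Q)[M(Q)]^{\sigc}\right)\right)^{\frac{a-2}{a}}\|\nabla f\|^2_{L^2}$, hence the identical constant $\nu=1-\left(A/\left(P(Q)[M(Q)]^{\sigc}\right)\right)^{\frac{N\alpha-4+2b}{N\alpha+2b}}$, so what your case split buys is transparency (it makes visible why \eqref{eq:A} alone cannot control $X$, as you correctly emphasize), while the paper's exponent-splitting buys brevity. Two small remarks. First, your reduction divides by $[M(f)]^{\sigc}$, so you should dispose of the trivial case $f=0$ separately (both \eqref{eq:coer-G} and \eqref{eq:coer-E} read $0\geq 0$ there); the paper's non-normalized computation needs no such caveat. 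Second, your identity $E(f)-\tfrac{1}{2}G(f)=\frac{N\alpha+2b-4}{4(\alpha+2)}P(f)$ is in fact the correct one: the paper's displayed coefficient $\frac{N\alpha-4+2b}{2(\alpha+2)}$ is off by a factor of $2$, a typo that affects neither conclusion since only its sign matters.
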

	
	\begin{proof}
		We write
		\[
		A = (1-\rho) P(Q) [M(Q)]^{\sigc}
		\]
		for some $\rho=\rho(A, Q)\in (0,1)$. It follows from \eqref{eq:GN-ineq}, \eqref{eq:poho-iden}, \eqref{eq:opt-cons}, and \eqref{eq:A} that
		\begin{align*}
		[P(f)]^{\frac{N\alpha+2b}{4}} &\leq C_{\opt} \left( P(f) [M(f)]^{\sigc}\right)^{\frac{N\alpha-4+2b}{4}} \|\nabla f\|^{\frac{N\alpha+2b}{2}}_{L^2} \\
		&=\frac{2(\alpha+2)}{N\alpha+2b} \left(\frac{P(f)[M(f)]^{\sigc}}{\|\nabla Q\|^2_{L^2} \|Q\|^{2\sigc}_{L^2}}\right)^{\frac{N\alpha-4+2b}{4}} \|\nabla f\|^{\frac{N\alpha+2b}{2}}_{L^2} \\
		&= \left(\frac{P(f)[M(f)]^{\sigc}}{P(Q) [M(Q)]^{\sigc}_{L^2}}\right)^{\frac{N\alpha-4+2b}{4}} \left(\frac{2(\alpha+2)}{N\alpha+2b} \|\nabla f\|^2_{L^2}\right)^{\frac{N\alpha+2b}{4}} \\
		& \leq (1-\rho)^{\frac{N\alpha-4+2b}{4}} \left(\frac{2(\alpha+2)}{N\alpha+2b} \|\nabla f\|^2_{L^2}\right)^{\frac{N\alpha+2b}{4}}
		\end{align*}
		which implies
		\[
		P(f) \leq \frac{2(\alpha+2)}{N\alpha+2b} (1-\rho)^{\frac{N\alpha-4+2b}{N\alpha+2b}} \|\nabla f\|^2_{L^2}.
		\]
		Thus we get
		\[
		G(f) = \|\nabla f\|^2_{L^2} -\frac{N\alpha+2b}{2(\alpha+2)} P(f) \geq \left(1-(1-\rho)^{\frac{N\alpha-4+2b}{N\alpha+2b}}\right) \|\nabla f\|^2_{L^2}
		\]
		which proves \eqref{eq:coer-G}. As $N\alpha-4+2b >0$, we have
		\[
		E(f) =\frac{1}{2} G(f) +\frac{N\alpha-4+2b}{2(\alpha+2)} P(f) \geq \frac{1}{2} G(f) 
		\]
		which shows \eqref{eq:coer-E}. The proof is complete.
	\end{proof}

	We are now able to give the proof of Theorem \ref{theo-scat-crite}.
	
	\noindent {\it Proof of Theorem \ref{theo-scat-crite}.}
	Let $u: [0,T^*) \times \R^N \rightarrow \C$ be a $H^1$-solution to \eqref{eq:inls} satisfying \eqref{scat-crite}. By the conservation of mass and energy, we infer from \eqref{scat-crite} that 
	\[
	\sup_{t\in [0,T^*)} \|\nabla u(t)\|_{L^2} \leq C(E, Q) <\infty.
	\]
	By the local well-posedness given in Lemma \ref{lem-local-theory}, we have $T^*=\infty$. 
	
	Let $A>0$ and $\delta>0$. We define
	\begin{equation*}
	S(A,\delta):= \sup \left\{ \|u\|_{S([0,\infty), \dot{H}^{\gamc})} \ : \ u \text{ is a solution to } \eqref{eq:inls} \text{ satisfying } \eqref{eq:scat-cond} \right\},
	\end{equation*}
	where
	\begin{align} \label{eq:scat-cond}
	\sup_{t\in [0,\infty)} P(u(t))[M(u(t))]^{\sigc} \leq A, \quad E(u) [M(u)]^{\sigc} \leq \delta.
	\end{align}
	Thanks to the scattering condition (see again Lemma \ref{lem-local-theory}) and the definition of $S(A,\delta)$, Theorem \ref{theo-scat-crite} is reduced to show the following proposition.
	
	\begin{proposition} \label{prop-scat}
		Let $N\geq 2$, $0<b<\min \left\{2,\frac{N}{2}\right\}$, and $\frac{4-2b}{N}<\alpha<\alpha(N)$. If $A< P(Q) [M(Q)]^{\sigc}$, then for all \footnote{Note the energy is positive due to Lemma \ref{lem-coer}.} $\delta>0$, $S(A,\delta)<\infty$.
	\end{proposition}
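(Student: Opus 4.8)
The plan is to prove Proposition~\ref{prop-scat} by a concentration/compactness and rigidity argument in the spirit of Kenig--Merle \cite{KM}. The quantity $S(A,\delta)$ is plainly nondecreasing in both arguments, and the small-data scattering in Lemma~\ref{lem-local-theory} shows $S(A,\delta)<\infty$ whenever $\delta$ is small enough (if $E(u)[M(u)]^{\sigc}$ is tiny, then by the coercivity in Lemma~\ref{lem-coer} the kinetic energy is tiny, hence $\|e^{it\Delta}u_0\|_{S(\dot H^{\gamc})}$ is below the small-data threshold). So I would define
\[
\delta_c := \sup\left\{\delta>0 \ : \ S(A,\delta)<\infty\right\}
\]
and argue by contradiction that $\delta_c=\infty$. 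If not, $\delta_c<\infty$, and I would extract a \emph{critical element}: a sequence of solutions $u_n$ satisfying \eqref{eq:scat-cond} with $E(u_n)[M(u_n)]^{\sigc}\nearrow\delta_c$ but with $\|u_n\|_{S([0,\infty),\dot H^{\gamc})}\to\infty$.

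The heart of the argument is to apply the linear profile decomposition of Lemma~\ref{lem-line-prof} to the (bounded in $H^1$) sequence of initial data $u_n(0)$, and then show that exactly one profile survives and that it gives rise to a compact-flow solution. First I would feed the decomposition \eqref{line-prof} into the associated nonlinear flows. Using the asymptotic Pythagorean expansions for the $\dot H^\gamma$-norms, together with the conservation laws and a Pythagorean expansion for the potential energy $P$ along bounded flows (which is the technical device the introduction flags, since $P$ is \emph{not} conserved), I would derive that the energy and mass split across profiles:
\[
\sum_{j} E(v^j_\infty)[M(\psi^j)]^{\sigc} + (\text{remainder contribution}) \leq \delta_c + o_n(1),
\]
where $v^j_\infty$ is the nonlinear profile attached to $\psi^j$. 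Each profile sits in one of three regimes governed by the last line of Lemma~\ref{lem-line-prof}: divergent time shift (handled by Lemma~\ref{lem-non-prof-dive-time}, where the nonlinear profile scatters), divergent space shift (handled by Lemma~\ref{lem-non-prof-dive-spac}, where the nonlinear profile is asymptotically linear and hence scatters), or $t^j_n\equiv 0$, $x^j_n\equiv 0$. In the first two regimes every profile has finite Strichartz norm by construction, so if \emph{more than one} profile carried positive energy, or if the surviving profile had energy strictly below $\delta_c$, each nonlinear profile would obey \eqref{eq:scat-cond} with a strictly smaller energy budget, hence finite $S$-norm by minimality of $\delta_c$; plugging these back through the stability result of Lemma~\ref{lem-sta} would force $\|u_n\|_{S(\dot H^{\gamc})}$ to be bounded, a contradiction. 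Thus exactly one profile remains, with no time or space translation, and it saturates the energy: $E(\psi^1)[M(\psi^1)]^{\sigc}=\delta_c$, $W^1_n\to 0$ in $H^1$.

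This produces a critical solution $u_c$ whose forward trajectory $\{u_c(t)\}_{t\geq 0}$ is precompact in $H^1$ modulo translations; indeed a second application of the profile decomposition along any time sequence $t_n\to T^*$, together with the same one-profile rigidity, yields precompactness of $\{u_c(t): t\geq 0\}$ up to a spatial path $x(t)$. The final, \emph{rigidity}, step is to show such a compact-flow solution with $P[M]^{\sigc}<P(Q)[M(Q)]^{\sigc}$ must be trivial. Here I would use the coercivity estimate \eqref{eq:coer-G}, which gives $G(u_c(t))\geq \nu\|\nabla u_c(t)\|_{L^2}^2$ uniformly in $t$, and feed it into a localized virial/Morawetz identity; the compactness controls the error terms from the cutoff and from the translation path, forcing $\int_0^T G(u_c(t))\,dt \gtrsim T$ while the virial quantity stays bounded, which is impossible unless $u_c\equiv 0$. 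The main obstacle I anticipate is precisely this interplay between the lack of conservation of $P(u(t))$ and the non-radial, infinite-variance setting: one must establish the Pythagorean-type expansion of $P$ along the nonlinear flow and control the spatial center $x(t)$ well enough (e.g.\ showing it cannot escape to infinity faster than sublinearly) so that the localized virial argument closes. Once $u_c\equiv 0$ is forced, it contradicts $\delta_c<\infty$ being a genuine threshold, so $\delta_c=\infty$ and $S(A,\delta)<\infty$ for all $\delta>0$.
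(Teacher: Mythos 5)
Your overall strategy coincides with the paper's: induction on the energy level, the linear profile decomposition of Lemma \ref{lem-line-prof}, the nonlinear profiles of Lemmas \ref{lem-non-prof-dive-time} and \ref{lem-non-prof-dive-spac}, a Pythagorean expansion of $P$ along bounded nonlinear flows (this is exactly the paper's Lemma \ref{lem-pytha-expan}, the technical heart, which you correctly identify as necessary since $P$ is not conserved), reduction to a single profile with trivial shifts, extraction of a critical solution, and a virial rigidity step driven by the coercivity \eqref{eq:coer-G}. The small-data step, the use of the induction hypothesis on each profile, and the stability argument are all as in the paper.

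The genuine flaw is in your compactness/rigidity step. You assert that the critical solution $u_{\ct}$ is precompact in $H^1$ only \emph{modulo translations}, with a spatial path $x(t)$, and you flag the control of $x(t)$ (sublinear growth, etc.) as the main unresolved obstacle for closing the localized virial argument. This is inconsistent with your own (correct) treatment of the profiles: since any profile with $|x^j_n|\to\infty$ is asymptotically linear and scatters by Lemma \ref{lem-non-prof-dive-spac}, such profiles can never survive in the critical element --- this is precisely how the paper's Case 2 rules out $|x^1_n|\to\infty$. Consequently, re-running the profile decomposition along time sequences yields precompactness of $\left\{u_{\ct}(t) \ : \ t\in [0,\infty)\right\}$ in $H^1$ with \emph{no} translation parameter (this is \cite[Proposition 6.3]{FG-BBMS}, which the paper invokes), and the rigidity then closes immediately: the localized virial estimate together with \eqref{eq:coer-G} force $u_{\ct}\equiv 0$, contradicting \eqref{eq:uc}. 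Your hedge re-introduces a difficulty that the inhomogeneity $|x|^{-b}$ removes. Worse, if a moving center really did have to be controlled, the standard NLS tools for doing so (zero momentum via Galilean invariance, as in \cite{DHR}) are unavailable here, because $|x|^{-b}$ breaks translation invariance and momentum is not conserved; so the fallback you sketch has no obvious implementation. As written, then, your rigidity step is incomplete; it becomes complete, and matches the paper, once you observe that the non-translation-invariance forces the compactness to hold with the spatial center fixed at the origin.
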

	The proof of Proposition \ref{prop-scat} is based on the concentration/compactness and rigidity argument introduced by Kenig and Merle \cite{KM} (see also \cite{DHR}). The main difficulty comes from the fact that the potential energy $P(u(t))$ is not conserved along the time evolution of \eqref{eq:inls}. To overcome the difficulty, we establish a Pythagorean decomposition along the bounded INLS flow (see Lemma \ref{lem-pytha-expan}). In the context of the standard NLS, a similar result was shown by Guevara in \cite[Lemma 3.9]{Guevara}. 
	
	The proof of Proposition \ref{prop-scat} is done by several steps.
	
	\vspace{3mm}
	
	\noindent {\bf Step 1. Small data scattering.}  By \eqref{eq:coer-E}, we have
	\[
	\|u_0\|^{\frac{2}{\gamc}}_{\dot{H}^{\gamc}} \leq \|\nabla u_0\|^2_{L^2} \|u_0\|^{2\sigc}_{L^2} \leq \frac{2}{\nu} E(u_0) [M(u_0)]^{\sigc} \leq \frac{2\delta}{\nu}.
	\]
	By taking $\delta>0$ sufficiently small, we see that $\|u_0\|_{\dot{H}^{\gamc}}$ is small which, by the small data scattering given in Lemma \ref{lem-local-theory}, implies $S(A, \delta)<\infty$.
	
	\vspace{3mm}
	
	\noindent {\bf Step 2. Existence of a critical solution.} Assume by contradiction that $S(A,\delta)=\infty$ for some $\delta>0$. By Step 1, 
	\begin{align} \label{eq:deltc}
	\deltc:= \deltc(A) := \inf \left\{\delta>0 \ : \ S(A,\delta) =\infty\right\}
	\end{align}
	is well-defined and positive. From the definition of $\deltc$, we have the following observations:
	\begin{itemize}[leftmargin=5mm]
		\item[(1)] If $u$ is a solution to \eqref{eq:inls} satisfying
		\[
		\sup_{t\in [0,\infty)} P(u(t)) [M(u(t))]^{\sigc} \leq A, \quad E(u) [M(u)]^{\sigc} <\deltc,
		\]
		then $\|u\|_{S([0,\infty), \dot{H}^{\sigc})} <\infty$ and the solution scatters in $H^1$ forward in time.
		\item[(2)] There exists a sequence of solution $u_n$ to \eqref{eq:inls} with initial data $u_{n,0}$ such that
		\begin{align} \label{eq:cond-un}
		\begin{aligned}
		\sup_{t\in [0,\infty)} P(u_n(t))[M(u_n(t))]^{\sigc} &\leq A \text{ for all } n, \\
		E(u_n) [M(u_n)]^{\sigc} &\searrow \deltc \text{ as } n\rightarrow \infty, \\
		\|u_n\|_{S([0,\infty), \dot{H}^{\gamc})} &= \infty \text{ for all } n.
		\end{aligned}
		\end{align}
	\end{itemize}
	We will prove that there exists a critical solution $u_{\ct}$ to \eqref{eq:inls} with initial data $u_{\ct,0}$ satisfying
	\begin{align} \label{eq:uc}
	\begin{aligned}
	M(u_{\ct}) &=1, \\
	\sup_{t\in [0,\infty)} P(u_{\ct}(t)) &\leq A, \\
	E(u_{\ct}) &= \deltc, \\
	\|u_{\ct}\|_{S([0,\infty), \dot{H}^{\gamc})} &=\infty.
	\end{aligned}
	\end{align}
	To see this, we consider the sequence $(u_{n,0})_{n\geq 1}$. Thanks to the scaling \eqref{eq:scaling}, we can assume that $M(u_{n,0})=1$ for all $n$. By the conservation of mass and energy, \eqref{eq:cond-un} becomes
	\begin{align} \label{eq:un-0}
	\begin{aligned}
	M(u_{n,0}) &=1 \text{ for all } n, \\
	\sup_{t\in [0,\infty)} P(u_n(t)) &\leq A \text{ for all } n, \\
	E(u_{n,0}) &\searrow \deltc \text{ as } n\rightarrow \infty, \\
	\|u_n\|_{S([0,\infty), \dot{H}^{\gamc})} &=\infty \text{ for all } n.
	\end{aligned}
	\end{align}
	Since $(u_{n,0})_{n\geq 1}$ is bounded in $H^1$, we apply the linear profile decomposition to $u_{n,0}$ and get
	\begin{align} \label{eq:un-0-profile}
	u_{n,0} (x) = \sum_{j=1}^J e^{-it^j_n\Delta} \psi^j(x-x^j_n) + W^J_n(x)
	\end{align}
	with the following properties: 
	\begin{align} 
	1\leq j \ne k \leq J, \quad \lim_{n\rightarrow \infty} |t^j_n-t^k_n| + |x^j_n - x^k_n| &= \infty, \label{eq:orthogo} \\
	\lim_{J \rightarrow \infty} \left[ \lim_{n\rightarrow \infty} \|e^{it\Delta} W^J_n\|_{S(\dot{H}^{\gamc}) \cap L^\infty_t(\R, L^{\frac{2N}{N-2\gamc}}_x)} \right] &=0, \label{eq:remainder}
	\end{align}
	and for fixed $J$ and $\gamma \in [0,1]$,
	\begin{align} 
	\|u_{n,0}\|^2_{\dot{H}^\gamma} = \sum_{j=1}^J \|\psi^j\|^2_{\dot{H}^\gamma} + \|W^J_n\|^2_{\dot{H}^\gamma} + o_n(1). \label{eq:H-gam-expan}
	\end{align}
	Moreover, we also have the following Pythagorean expansions of the potential and total energies:
	\begin{align} 
	P(u_{n,0}) &= \sum_{j=1}^J P(e^{-it^j_n\Delta} \psi^j(\cdot-x^j_n)) + P(W^J_n) + o_n(1), \label{expan-P} \\
	E(u_{n,0}) &= \sum_{j=1}^J E (e^{-it^j_n \Delta} \psi^j(\cdot-x^j_n)) + E(W^J_n) + o_n(1). \label{expan-E}
	\end{align}
	
	For the proof of the above expansions, we refer to \cite{FG-JDE} (see also \cite{FG-BBMS}). We now define the nonlinear profiles $v^j:I^j \times \R^N \rightarrow \C$ associated to $\psi^j, t^j_n$, and $x^j_n$ as follows:
	\begin{itemize} [leftmargin=5mm]
		\item If $x^j_n \equiv 0$ and $t^j_n \equiv 0$, then $v^j$ is the maximal lifespan solution to \eqref{eq:inls} with initial data $\left. v^j\right|_{t=0} = \psi^j$. 
		\item If $x^j_n \equiv 0$ and $t^j_n \rightarrow -\infty$, then $v^j$ is the maximal lifespan solution to \eqref{eq:inls} that scatters to $e^{it\Delta} \psi^j$ as $t\rightarrow \infty$ (Such a solution exists due to Lemma \ref{lem-non-prof-dive-time}). In particular, $\|v^j\|_{S((0,\infty), \dot{H}^{\gamc})} <\infty$ and $\|v^j(-t^j_n) -e^{-it^j_n\Delta} \psi^j\|_{H^1} \rightarrow 0$ as $n \rightarrow \infty$.
		\item If $x^j_n \equiv 0$ and $t^j_n \rightarrow \infty$, then $v^j$ is the maximal lifespan solution to \eqref{eq:inls} that scatters to $e^{it\Delta} \psi^j$ as $t\rightarrow -\infty$. In particular, $\|v^j\|_{S((-\infty, 0),\dot{H}^{\gamc})} <\infty$ and $\|v^j(-t^j_n) -e^{-it^j_n\Delta} \psi^j\|_{H^1} \rightarrow 0$ as $n\rightarrow \infty$.
		\item If $|x^j_n| \rightarrow \infty$, then we simply take $v^j(t) = e^{it\Delta}\psi^j$. 
	\end{itemize}
	
	For each $j, n\geq 1$, we introduce $v^j_n: I^j_n \times \R^N \rightarrow \C$ defined by 
	\begin{itemize} [leftmargin=5mm]
		\item if $x^j_n\equiv 0$, then $v^j_n(t):= v^j(t-t^j_n)$, where $I^j_n:= \left\{t \in \R \ : \ t -t^j_n \in I^j\right\}$.
		\item if $|x^j_n|\rightarrow \infty$, we define $v^j_n$ a solution to \eqref{eq:inls} with initial data $v^j_n(0,x) = v^j(-t^j_n, x-x^j_n) = e^{-it^j_n\Delta} \psi^j(x-x^j_n)$. It follows from Lemma \ref{lem-non-prof-dive-spac} that for $n$ sufficiently large, $v^j_n$ exists globally in time and scatters in $H^1$ in both directions.   
	\end{itemize}
	
	We have from the definition of $v^j_n$ and the continuity of the linear flow that
	\begin{align} \label{eq:v-jn}
	\|v^j_n(0) - e^{-it^j_n\Delta} \psi^j(\cdot-x^j_n)\|_{H^1} \rightarrow 0 \quad \text{as } n\rightarrow \infty.
	\end{align}
	Thus we rewrite \eqref{eq:un-0-profile} as
	\begin{align} \label{eq:un-0-vj}
	u_{n,0}(x)= \sum_{j=1}^J v^j_n(0, x) + \tilde{W}^J_n(x),
	\end{align}
	where
	\[
	\tilde{W}^J_n(x)= \sum_{j=1}^J e^{-it^j_n\Delta} \psi^j(x-x^j_n) - v^j_n(0, x) + W^J_n(x).
	\]
	By Strichartz estimates, we have
	\[
	\|e^{it\Delta} \tilde{W}^J_n\|_{S(\dot{H}^{\gamc})  \cap L^\infty_t(\R, L^{\frac{2N}{N-2\gamc}}_x)} \lesssim \sum_{j=1}^J \|e^{-it^j_n\Delta} \psi^j(\cdot-x^j_n) - v^j_n(0)\|_{H^1} + \|e^{it\Delta} W^J_n\|_{S(\dot{H}^{\gamc})  \cap L^\infty_t(\R, L^{\frac{2N}{N-2\gamc}}_x)}
	\]
	which, by \eqref{eq:remainder} and \eqref{eq:v-jn}, implies that
	\begin{align} \label{eq:tilde-W-Jn}
	\lim_{J\rightarrow \infty} \left[\lim_{n\rightarrow \infty} \|e^{it\Delta} \tilde{W}^J_n\|_{S(\dot{H}^{\gamc}) \cap L^\infty_t(\R, L^{\frac{2N}{N-2\gamc}}_x)} \right] =0.
	\end{align}
	Using the fact that
	\[
	| \|\nabla f\|^2_{L^2} - \|\nabla g\|^2_{L^2} | \lesssim \|\nabla f - \nabla g\|_{L^2} (\|\nabla f\|_{L^2} + \|\nabla g\|_{L^2})
	\]
	and (see \cite[Lemma 4.3]{FG-BBMS})
	\begin{align}\label{est-P}
	|P(f) - P(g)| \lesssim \|f-g\|_{L^{\alpha+2}} \left(\|f\|^{\alpha+1}_{L^{\alpha+2}} + \|g\|^{\alpha+1}_{L^{\alpha+2}} \right)  + \|f-g\|_{L^r} \left(\|f\|^{\alpha+1}_{L^r} + \|g\|^{\alpha+1}_{L^r} \right)
	\end{align}
	for some $\frac{2N\alpha}{N-b}<r<2^*$, where
	\begin{align} \label{eq:2-star}
	2^*:= \left\{
	\begin{array}{ccl}
	\frac{2N}{N-2} &\text{if}& N\geq 3, \\
	\infty &\text{if} & N=1,2,
	\end{array}
	\right. 
	\end{align}
	we infer from \eqref{expan-E}, Sobolev embedding, and \eqref{eq:v-jn} that
	\begin{align} \label{expan-E-tilde}
	E(u_{n,0}) = \sum_{j=1}^J E(v^j_n(0)) + E(\tilde{W}^J_n) + o_n(1).
	\end{align}
	Next, we show the following Pythagorean expansion along the bounded INLS flow (see \cite[Lemma 3.9]{Guevara} for a similar result in the context of NLS).
	
	\begin{lemma} [Pythagorean expansion along the bounded INLS flow] \label{lem-pytha-expan}
		Let $T \in (0,\infty)$ be a fixed time. Assume that for all $n \geq 1$, $u_n(t):= \inls(t) u_{n,0}$ exists up to time $T$ and satisfies
		\begin{align} \label{eq:boun-un-t}
		\lim_{n\rightarrow \infty} \sup_{t \in [0,T]} \|\nabla u_n(t)\|_{L^2} <\infty,
		\end{align}
		where $\inls(t) f$ denotes the solution to \eqref{eq:inls} with initial data $f$ at time $t=0$. We consider the nonlinear profile \eqref{eq:un-0-vj}. Denote $\tilde{W}^J_n(t):= \inls(t) \tilde{W}^J_n$. Then for all $t\in [0,T]$,
		\begin{align} \label{eq:pytha-expan-grad}
		\|\nabla u_n(t)\|^2_{L^2} = \sum_{j=1}^J \|\nabla v^j_n(t)\|^2_{L^2} + \|\nabla \tilde{W}^J_n(t)\|^2_{L^2} + o_{J,n}(1),
		\end{align} 
		where $o_{J,n}(1) \rightarrow 0$ as $J,n \rightarrow \infty$ uniformly on $0\leq t\leq T$. In particular, we have for all $t\in [0,T]$,
		\begin{align} \label{eq:pytha-expan-P}
		P(u_n(t)) = \sum_{j=1}^J P(v^j_n(t)) + P(\tilde{W}^J_n(t)) + o_{J,n}(1).
		\end{align}
	\end{lemma}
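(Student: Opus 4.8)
The plan is to realize $u_n$ as a small perturbation of the superposition of the nonlinear profiles and to read off both expansions from the asymptotic orthogonality of the pieces. Set
$$
\tilde{u}^J_n(t) := \sum_{j=1}^J v^j_n(t) + \tilde{W}^J_n(t), \qquad \tilde{W}^J_n(t) = \inls(t)\tilde{W}^J_n ,
$$
so that each summand solves \eqref{eq:inls} exactly and
$$
(i\partial_t + \Delta)\tilde{u}^J_n + |x|^{-b}|\tilde{u}^J_n|^\alpha\tilde{u}^J_n = e^J_n ,
$$
where $e^J_n = |x|^{-b}\big(|\tilde{u}^J_n|^\alpha\tilde{u}^J_n - \sum_{j=1}^J|v^j_n|^\alpha v^j_n - |\tilde{W}^J_n(t)|^\alpha\tilde{W}^J_n(t)\big)$ consists purely of cross terms. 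First I would record that every profile with a divergent time or space shift is global with finite global $\dot{H}^{\gamc}$-Strichartz norm by Lemmas \ref{lem-non-prof-dive-time} and \ref{lem-non-prof-dive-spac}, so the only profile that could fail to survive up to time $T$ is the unique stationary one $v^1$ (with $t^1_n\equiv x^1_n\equiv 0$; at most one profile can have both shifts zero, by \eqref{eq:orthogo}). Using the Pythagorean expansion \eqref{eq:H-gam-expan} of the data together with the almost-orthogonality of the profiles, the superposition $\sum_{j=1}^J v^j_n$ has $S([0,\tau],\dot{H}^{\gamc})$-norm bounded uniformly in $J$ and $n$ on any interval on which $v^1$ exists, and for $J,n$ large the remainder flow $\tilde{W}^J_n(t)$ is global with small critical norm by the small-data theory applied to \eqref{eq:tilde-W-Jn}.

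Given these uniform bounds, the heart of the matter is to show $\|e^J_n\|_{S'([0,T],\dot{H}^{-\gamc})} + \|\scal{\nabla} e^J_n\|_{S'([0,T],L^2)} \to 0$ as $J,n\to\infty$. This is where Lemma \ref{lem-non-est} and the pairwise divergence \eqref{eq:orthogo} enter: expanding $|\tilde{u}^J_n|^\alpha\tilde{u}^J_n$ produces only mixed products of distinct profiles (and of profiles with the remainder), each of which is forced to zero either by spatial separation when $|x^j_n - x^k_n|\to\infty$ or by dispersive decay of the underlying linear flow when $|t^j_n - t^k_n|\to\infty$; the inhomogeneous weight $|x|^{-b}$ is treated exactly as in the proof of Lemma \ref{lem-non-est}, splitting into regions near and far from the origin. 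With the error controlled, the stability result Lemma \ref{lem-sta} (and, re-running the Strichartz estimates on the difference at the $L^2$-level, Remark \ref{rem-sta}) yields
$$
\lim_{J,n\to\infty}\Big(\|u_n - \tilde{u}^J_n\|_{S([0,T],\dot{H}^{\gamc})} + \|u_n - \tilde{u}^J_n\|_{L^\infty_t([0,T],H^1_x)}\Big) = 0 .
$$

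To discharge the standing hypothesis that $v^1$ exists on $[0,T]$, I would run a continuity argument: let $\tau^*$ be the maximal existence time of $v^1$ and suppose $\tau^* \le T$. On each $[0,\tau]$ with $\tau<\tau^*$ all profiles exist, so the approximation above is valid there, and combined with the gradient orthogonality below it gives $\sup_{t\in[0,\tau]}\|\nabla u_n(t)\|_{L^2}^2 \ge \|\nabla v^1(\tau)\|_{L^2}^2 - o_n(1)$. Letting $\tau \nearrow \tau^*$ and invoking the blow-up alternative of Lemma \ref{lem-local-theory} contradicts the uniform bound \eqref{eq:boun-un-t}, so $\tau^*>T$. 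Finally, with $u_n \approx \tilde{u}^J_n$ in $L^\infty_t([0,T],H^1_x)$ in hand, the two expansions follow by expanding the squared $\dot{H}^1$-norm of $\tilde{u}^J_n$ and, via the difference estimate \eqref{est-P}, the functional $P(\tilde{u}^J_n)$, then discarding all cross terms: $\langle \nabla v^j_n(t), \nabla v^k_n(t)\rangle \to 0$ for $j\ne k$ and $\langle \nabla v^j_n(t), \nabla \tilde{W}^J_n(t)\rangle \to 0$ by \eqref{eq:orthogo} and the weak smallness of the remainder, uniformly on $[0,T]$; the same separation makes the mixed $|x|^{-b}|\cdot|^{\alpha+2}$ integrals vanish, so that $P(v^j_n(t))$ for a space-divergent profile tends to $0$ while the stationary and time-divergent profiles retain their contributions. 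I expect the genuinely delicate step to be the uniform-in-time smallness of the interaction error $e^J_n$ over all of $[0,T]$, since the weight $|x|^{-b}$ couples the spatial localization of the profiles to the orthogonality mechanism and the estimate must remain uniform as the number of profiles grows.
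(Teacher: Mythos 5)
Your overall architecture is close to the paper's: a stability-based approximation of $u_n$ by a superposition of nonlinear profiles, a continuity argument to rule out blow-up of the single stationary profile before time $T$ using \eqref{eq:boun-un-t}, and the potential-energy expansion via \eqref{est-P} and profile separation. Including the nonlinear remainder flow $\inls(t)\tilde{W}^J_n$ in the approximation (so the initial-data error vanishes and all error sits in the equation) is a legitimate variant of the paper's choice, where $\tilde{u}^J_n=\sum_j v^j_n$ and $\tilde{W}^J_n$ is fed into Lemma \ref{lem-sta} as an initial-data perturbation; the profile--remainder cross terms you create can be absorbed because the nonlinear remainder flow has vanishing critical Strichartz norm (this is \eqref{eq:tilde-W-Jn-t} in the paper). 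A minor imprecision: time-divergent profiles are \emph{not} known to be global with finite global $\dot{H}^{\gamc}$-norm; Lemma \ref{lem-non-prof-dive-time} only gives existence and finite Strichartz norm on a half-line, but that suffices since on the fixed window $[0,T]$ one has $\|v^j_n\|_{S([0,T],\dot{H}^{\gamc})}\to 0$.

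The genuine gap is your last step: you obtain \eqref{eq:pytha-expan-grad} by expanding $\|\nabla \tilde{u}^J_n(t)\|_{L^2}^2$ and asserting that the gradient cross terms $\scal{\nabla v^j_n(t),\nabla v^k_n(t)}$ and $\scal{\nabla v^j_n(t),\nabla\tilde{W}^J_n(t)}$ vanish ``by \eqref{eq:orthogo} and the weak smallness of the remainder, uniformly on $[0,T]$.'' Neither mechanism delivers this. The pairwise divergence \eqref{eq:orthogo} kills cross terms of \emph{linear} flows, but the stationary profile $v^1$ is a genuinely nonlinear flow with no dispersive decay of its gradient, and gradients of nonlinear flows are not small in any norm in which the separation acts. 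Worse, ``weak smallness of the remainder'' is not among the available properties: what the decomposition and the small-data/bootstrap arguments give is boundedness of $\tilde{W}^J_n(t)$ in $H^1$ together with smallness of its critical norms (\eqref{eq:remainder}, \eqref{eq:tilde-W-Jn-t}); this controls $P(\tilde{W}^J_n(t))$ and the potential cross terms, but says nothing about $\int \nabla v^j_n(t)\cdot\overline{\nabla \tilde{W}^J_n(t)}\,dx$, and $\|\nabla\tilde{W}^J_n(t)\|_{L^2}$ is in general of order one --- and you would additionally need all this \emph{uniformly} in $t\in[0,T]$. The paper avoids this entirely: it never estimates gradient cross terms. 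Instead it proves only the potential expansion \eqref{eq:pytha-expan-P} (as you do) and the time-zero energy expansion \eqref{expan-E-tilde}, then uses conservation of energy for $u_n$, for each $v^j_n$, and for $\tilde{W}^J_n(t)$; since
\begin{align*}
\|\nabla f\|^2_{L^2} = 2E(f) + \tfrac{2}{\alpha+2}P(f),
\end{align*}
the kinetic expansion \eqref{eq:pytha-expan-grad} follows by pure algebra from the potential and energy expansions. You should replace your final step by this conservation-law argument; as written, that step would fail.
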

	
	\begin{proof} By \eqref{eq:H-gam-expan}, there exists $J_0$ large enough such that $\|\psi^j\|_{H^1}$ sufficiently small for all $j \geq J_0 +1$. By the triangle inequality using \eqref{eq:v-jn}, we see that for $n$ large, $\|v^j_n(0)\|_{H^1}$ is small which, by the small data theory, implies that $v^j_n$ exists globally in time and scatters in $H^1$ in both directions. Moreover, we can assume that for all $1\leq j \leq J_0$, $x^j_n \equiv 0$ since otherwise, if $|x^j_n|\rightarrow \infty$, then by Lemma \ref{lem-non-prof-dive-spac}, we have for $n$ large, $v^j_n$ exists globally in time and scatters in $H^1$ in both directions. In particular, we have for all $j\geq J_0+1$,
		\begin{align} \label{eq:str-nor-v-jn-large}
		\|v^j_n\|_{S(\dot{H}^{\gamc})} <\infty
		\end{align}
		for $n$ large. We reorder the first $J_0$ profiles and let $0\leq J_2 \leq J_0$ such that
		\begin{itemize}
			\item for any $1\leq j \leq J_2$, the time shifts $t^j_n \equiv 0$ for all $n$. Here $J_2 \equiv 0$ means that there is no $j$ in this case. Note that by the pairwise divergence property \eqref{diver-proper}, we have $J_2 \leq 1$.
			\item for any $J_2 +1 \leq j \leq J_0$, the time shifts $|t^j_n| \rightarrow \infty$ as $n\rightarrow \infty$. Here $J_2 = J_0$ means that  there is no $j$ in this case.
		\end{itemize}
		In the following, we only consider the case $J_2=1$. The one for $J_2=0$ is treated similarly (even simpler). Fix $T\in (0,\infty)$ and assume that $u_n(t) = \inls(t) u_{n,0}$ exists up to time $T$ and satisfies \eqref{eq:boun-un-t}. We observe that for $2 \leq j \leq J_0$,
		\begin{align} \label{eq:obser-v-jn}
		\|v^j_n\|_{S([0,T],\dot{H}^{\gamc})} \rightarrow 0 \text{ as } n\rightarrow \infty.
		\end{align}
		Indeed, if $t^j_n \rightarrow \infty$, then as $\|v^j\|_{S((-\infty,0), \dot{H}^{\gamc})} <\infty$, we have
		\[
		\|v^j_n\|_{S([0,T],\dot{H}^{\gamc})} = \|v^j\|_{S([-t^j_n, T-t^j_n], \dot{H}^{\gamc})} \rightarrow 0
		\]
		as $n\rightarrow \infty$. Note that we do not consider $\left(\infty,\frac{2N}{N-2\gamc}\right)$ as a $\dot{H}^{\gamc}$-admissible pair. A similar argument goes for $t^j_n \rightarrow -\infty$. 
		
		Moreover, for $2 \leq j \leq J_0$, we have for all $2<r \leq 2^*$, 
		\begin{align} \label{eq:Lr-v-jn}
		\|v^j_n\|_{L^\infty_t([0,T], L^r_x)} \rightarrow 0 \text{ as } n \rightarrow \infty.
		\end{align}
		In fact, we have
		\begin{align*}
		\|v^j_n\|_{L^\infty_t([0,T],L^r_x)} &\leq \|e^{i(t-t^j_n)\Delta} \psi^j(\cdot-x^j_n)\|_{L^\infty_t([0,T],L^r_x)} + \|v^j_n - e^{i(t-t^j_n)\Delta} \psi^j(\cdot-x^j_n)\|_{L^\infty_t([0,T],L^r_x)} \\
		&\leq \|e^{i(t-t^j_n)\Delta} \psi^j\|_{L^\infty_t([0,T],L^r_x)} + C \|v^j_n - e^{i(t-t^j_n)\Delta} \psi^j(\cdot-x^j_n)\|_{L^\infty_t([0,T],H^1_x)}.
		\end{align*}
		By the decay of the linear flow, the first term tends to zero as $n$ tends to infinity due to $|t^j_n| \rightarrow \infty$. For the second term, we use the Duhamel formula
		\[
		v^j_n(t) = e^{it\Delta} v^j_n(0) +i \int_0^t e^{i(t-s)\Delta} |x|^{-b} |v^j_n(s)|^\alpha v^j_n(s) ds,
		\]
		Strichartz estimates, and Lemma \ref{lem-non-est} to have
		\begin{align*}
		\|v^j_n\|_{L^\infty_t([0,T],H^1_x)} &+ \|\scal{\nabla} v^j_n\|_{S([0,T],L^2)} \\
		&\lesssim \|v^j_n(0)\|_{H^1} + \|v^j_n\|^{\theta}_{L^\infty_t([0,T],H^1_x)} \|v^j_n\|^{\alpha-\theta}_{S([0,T],\dot{H}^{\gamc})} \|\scal{\nabla} v^j_n\|_{S([0,T],L^2)} \\
		&\lesssim \|e^{-it^j_n\Delta} \psi^j\|_{H^1} +1 + \left(\|v^j_n\|_{L^\infty_t([0,T],H^1_x)} +\|\scal{\nabla} v^j_n\|_{S([0,T],L^2)} \right)^{1+\theta}  \|v^j_n\|^{\alpha-\theta}_{S([0,T], \dot{H}^{\gamc})}.
		\end{align*}
		It follows from \eqref{eq:obser-v-jn} that
		\begin{align} \label{eq:bound-v-jn}
		\|v^j_n\|_{L^\infty_t([0,T],H^1_x)}+\|\scal{\nabla} v^j_n\|_{S([0,T],L^2)} \lesssim 1.
		\end{align}
		Similarly, we have
		\begin{align*}
		\|v^j_n &- e^{i(t-t^j_n)\Delta} \psi^j(\cdot-x^j_n)\|_{L^\infty_t([0,T], H^1_x)} \\
		&\lesssim \|e^{it\Delta} v^j_n(0) - e^{i(t-t^j_n)\Delta} \psi^j(\cdot-x^j_n)\|_{L^\infty_t([0,T],H^1_x)}  \\
		&\mathrel{\phantom{\lesssim}} + \|v^j_n\|^{\theta}_{L^\infty_t([0,T],H^1_x)} \|v^j_n\|^{\alpha-\theta}_{S([0,T],\dot{H}^{\gamc})} \|\scal{\nabla} v^j_n\|_{S([0,T],L^2)} \\
		&\lesssim \|v^j_n(0)-e^{-it^j_n\Delta} \psi^j(\cdot-x^j_n)\|_{H^1} + \|v^j_n\|^{\theta}_{L^\infty_t([0,T],H^1_x)} \|v^j_n\|^{\alpha-\theta}_{S([0,T],\dot{H}^{\gamc})} \|\scal{\nabla} v^j_n\|_{S([0,T],L^2)}
		\end{align*}
		which, by \eqref{eq:v-jn}, \eqref{eq:obser-v-jn}, and \eqref{eq:bound-v-jn}, implies
		\[
		\|v^j_n-e^{i(t-t^j_n)\Delta} \psi^j(\cdot-x^j_n)\|_{L^\infty_t([0,T],H^1_x)}  \rightarrow 0 \text{ as } n\rightarrow \infty.
		\]
		We thus prove \eqref{eq:Lr-v-jn}. 
		
		Denote
		\[
		B:= \max \left\{1, \lim_{n\rightarrow \infty} \sup_{t\in [0,T]}\|\nabla u_n(t)\|_{L^2} \right\} <\infty.
		\]
		and let $T^1$ the maximal forward time such that 
		\[
		\sup_{t\in [0,T^1]} \|\nabla v^1(t)\|_{L^2} \leq 2B.
		\]
		In what follows, we will show that for all $t\in [0, T^1]$, 
		\begin{align} \label{eq:pytha-expan-proof}
		\|\nabla u_n(t)\|^2_{L^2} = \sum_{j=1}^J \|\nabla v^j_n(t)\|^2_{L^2} + \|\nabla \tilde{W}^J_n(t)\|^2_{L^2} + o_{J,n}(1),
		\end{align}
		where $o_{J,n}(1) \rightarrow 0$ as $J,n \rightarrow \infty$ uniformly on $0\leq t\leq T^1$. We see that \eqref{eq:pytha-expan-proof} implies \eqref{eq:pytha-expan-grad} as $T^1 \geq T$. In fact, if $T^1 < T$, then by \eqref{eq:pytha-expan-proof},
		\[
		\sup_{t\in [0,T^1]} \|\nabla v^1(t)\|_{L^2} = \sup_{t\in [0,T^1]} \|\nabla v^1_n(t)\|_{L^2} \leq \sup_{t\in [0,T^1]} \|\nabla u_n(t)\|_{L^2} \leq \sup_{t\in [0,T]} \|\nabla u_n(t)\|_{L^2} \leq B.
		\]
		Note that $t^1_n \equiv 0$. By the continuity, it contradicts the maximality of $T^1$.
		
		We estimate $\|v^1_n\|_{S([0,T^1],\dot{H}^{\gamc})}$ as follows. For $N\geq 3$, by interpolation between endpoints and Sobolev embedding, we have
		\begin{align*}
		\|v^1_n\|_{S([0,T^1], \dot{H}^{\gamc})} &= \|v^1\|_{S([0,T^1],\dot{H}^{\gamc})} \\
		&\lesssim \|v^1\|_{L^{\frac{2}{1-\gamc}}_t([0,T^1], L^{\frac{2N}{N-2}}_x)} + \|v^1\|_{L^\infty_t([0,T^1], L^{\frac{2N}{N-2\gamc}}_x)} \\
		&\lesssim \|v^1\|_{L^{\frac{2}{1-\gamc}}_t([0,T^1], L^{\frac{2N}{N-2}}_x)} +  \|v^1\|^{1-\gamc}_{L^\infty_t([0,T^1],L^2_x)} \|v^1\|^{\gamc}_{L^\infty_t([0,T^1], L^{\frac{2N}{N-2}}_x)}\\
		&\lesssim  (T^1)^{\frac{1-\gamc}{2}}  \|\nabla v^1\|_{L^\infty_t([0,T^1], L^2_x)} + C \|\nabla v^1\|^{\gamc}_{L^\infty_t([0,T^1], L^{\frac{2N}{N-2}}_x)} \\
		&\lesssim  (T^1)^{\frac{1-\gamc}{2}} B + C B^{\gamc}.
		\end{align*}
		Here we have use the conservation of mass and the choice of $v^1$ to have that for all $t\in [0,T^1]$,
		\[
		\|v^1(t)\|_{L^2} = \lim_{n\rightarrow \infty} \|v^1(-t^1_n)\|_{L^2} = \lim_{n\rightarrow \infty} \|e^{-it^1_n\Delta} \psi^1\|_{L^2}=\|\psi^1\|_{L^2} \leq \|u_{n,0}\|_{L^2} \leq 1.
		\]
		When $N=2$, a similar estimate holds by interpolating between $\left(\infty, \frac{2}{1-\gamc}\right)$ and $\left(\frac{2}{1-\gamc}, r\right)$ with $r$ sufficiently large and using Sobolev embedding. This shows that 
		\begin{align} \label{eq:v-jn-small}
		\|v^1_n\|_{S([0,T^1], \dot{H}^{\gamc})} \leq C(T^1,B).	
		\end{align}
		
		Now we define the approximation
		\[
		\tilde{u}^J_n(t,x):= \sum_{j=1}^J v^j_n(t, x).
		\]
		We have 
		\[
		u_{n,0}(x) - \tilde{u}^J_n(0,x) = \tilde{W}^J_n(x).
		\]
		By \eqref{eq:tilde-W-Jn}, we have
		\begin{align} \label{ini-cond-sta}
		\lim_{J \rightarrow \infty} \left[\lim_{n\rightarrow \infty} \|e^{it\Delta} (u_{n,0} - \tilde{u}^J_n(0))\|_{S(\dot{H}^{\gamc})\cap L^\infty_t(\R, L^{\frac{2N}{N-2\gamc}}_x) }\right] =0.
		\end{align}
		We also have
		\[
		i\partial_t \tilde{u}^J_n + \Delta \tilde{u}^J_n + |x|^{-b} \left|\tilde{u}^J_n\right|^\alpha \tilde{u}^J_n = \tilde{e}^J_n,
		\]
		where
		\[
		\tilde{e}^J_n = \sum_{j=1}^J F(v^j_n) - F \left( \sum_{j=1}^J v^j_n\right)
		\]
		with $F(u):= |x|^{-b}|u|^\alpha u$. We also have the following properties of the approximate solutions.
		\begin{lemma} \label{lem-appro-solu}
			The functions $\tilde{u}^J_n$ and $\tilde{e}^J_n$ satisfy
			\begin{align} \label{prop-tilde-u-Jn-1}
			\limsup_{n\rightarrow \infty} \left( \|\tilde{u}^J_n\|_{L^\infty_t([0,T^1], H^1_x)} + \|\tilde{u}^J_n\|_{S([0,T^1],\dot{H}^{\gamc})} \right) \lesssim 1
			\end{align}
			uniformly in $J$ and
			\begin{align} \label{prop-tilde-u-Jn-2}
			\lim_{J\rightarrow \infty} \lim_{n\rightarrow \infty} \|\scal{\nabla}\tilde{e}^J_n\|_{S'([0,T^1], L^2)} + \|\tilde{e}^J_n\|_{S'([0,T^1],\dot{H}^{-\gamc})} =0.
			\end{align}
		\end{lemma} 
		\begin{proof}
			The boundedness of $\|\tilde{u}^J_n\|_{S([0,T^1],\dot{H}^{\gamc})}$ follows from \eqref{eq:str-nor-v-jn-large}, \eqref{eq:obser-v-jn}, and \eqref{eq:v-jn-small}. The boundedness of $\|\tilde{u}^J_n\|_{L^\infty_t([0,T^1],L^2_x)}$ follows from \eqref{eq:H-gam-expan} and the fact that
			\[
			\|v^j_n(t)\|_{L^2} = \|v^j(t-t^j_n)\|_{L^2} = \lim_{n\rightarrow \infty} \|v(-t^j_n)\|_{L^2} = \lim_{n\rightarrow \infty} \|e^{-it^j_n\Delta} \psi^j\|_{L^2} = \|\psi^j\|_{L^2}. 
			\]
			To see the boundedness of $\|\nabla \tilde{u}^J_n\|_{L^\infty_t([0,T^1],L^2_x)}$, we proceed as follows. For $j\geq J_0$, by \eqref{eq:str-nor-v-jn-large}, we split $[0,T^1]$ into finite subintervals $I_k, k=1, \cdots, M$ such that $\|v^j_n\|_{S(I_k, \dot{H}^{\gamc})}$ is small. By Duhamel's formula, Strichartz estimates, and Lemma \ref{lem-non-est}, we have
			\[
			\|\nabla v^j_n\|_{L^\infty_t (I_k, L^2_x)} \lesssim \|\nabla v^j_n(t_k)\|_{L^2}, \quad I_k = [t_k, t_{k+1}], \quad k=1, \cdots, M.
			\]
			Summing over these finite intervals, we get
			\[
			\|\nabla v^j_n\|_{L^\infty_t([0,T^1], L^2_x)} \lesssim \|\nabla v^j_n(0)\|_{L^2}.
			\]
			For $2 \leq j \leq J_0$, we have from the Duhamel formula, Strichartz estimates, Lemma \ref{lem-non-est}, and \eqref{eq:obser-v-jn}, we have
			\[
			\|\nabla v^j_n\|_{L^\infty_t([0,T^1], L^2_x)} \lesssim \|\nabla v^j_n(0)\|_{L^2}
			\]
			for $n$ sufficiently large. Thus we have
			\begin{align*}
			\|\nabla \tilde{u}^J_n\|^2_{L^\infty_t([0,T^1], L^2_x)} &\leq \|\nabla v^1\|^2_{L^\infty_t([0,T^1],L^2_x)} + \sum_{j=2}^{J} \|\nabla v^j_n\|^2_{L^\infty_t([0,T^1],L^2_x)}  \\
			&\lesssim B^2 + \sum_{j=2}^J \|\nabla v^j_n(0)\|^2_{L^2} \\
			&\lesssim B^2 + \sum_{j=2}^J  \|\nabla \psi^j\|^2_{L^2}  + o_{n}(1) \\
			&\lesssim B^2 + \|\nabla u_{n,0}\|^2_{L^2} + o_{n}(1) \\
			&\lesssim B^2 + o_{n}(1).
			\end{align*}
			This shows the boundedness of $\|\nabla \tilde{u}^J_n\|_{L^\infty_t([0,T^1], L^2_x)}$ and we prove \eqref{prop-tilde-u-Jn-1}. To see \eqref{prop-tilde-u-Jn-2}, we follow from the same argument as in \cite[Claim 1 (6.23)]{FG-BBMS}. We thus omit the details.
		\end{proof}
		
		Thanks to \eqref{ini-cond-sta} and Lemma \ref{lem-appro-solu}, the stability given in Lemma \ref{lem-sta} (see also Remark \ref{rem-sta}) implies 
		\[
		\lim_{J\rightarrow \infty} \left[\lim_{n\rightarrow \infty} \|u_n-\tilde{u}^J_n\|_{S([0,T^1], \dot{H}^{\gamc})\cap L^\infty_t([0,T^1], L^{\frac{2N}{N-2\gamc}}_x)}\right]=0.
		\]
		By interpolating between endpoints and using Sobolev embedding, we infer that
		\[
		\|u_n-\tilde{u}^J_n\|_{L^\infty_t([0,T^1],L^{\alpha+2}_x) \cap L^\infty_t([0,T^1], L^r_x)} \lesssim \|u_n- \tilde{u}^J_n\|_{L^\infty_t([0,T^1], L^{\frac{2N}{N-2\gamc}}_x)} \|\scal{\nabla}(u_n- \tilde{u}^J_n)\|_{L^\infty_t([0,T^1], L^2_x)} \rightarrow 0 
		\]
		as $J, n\rightarrow \infty$, where $r$ is an exponent satisfying $\frac{2N}{N-2\gamc}<\frac{N(\alpha+2)}{N-b}<r<2^*$. This estimate together with \eqref{est-P} yield
		\begin{align} \label{proof-1}
		|P(u_n(t)) - P(\tilde{u}^J_n(t))| \rightarrow 0
		\end{align}
		as $J, n\rightarrow \infty$ uniformly on $0 \leq t\leq T^1$. On the other hand, we have from the same argument as in \cite[Proposition 5.3]{FG-BBMS} using \eqref{eq:Lr-v-jn} that for all $t\in [0,T^1]$,
		\begin{align} \label{proof-2}
		P(\tilde{u}^J_n(t)) = \sum_{j=1}^J P(v^j_n(t)) +o_{J,n}(1) = \sum_{j=1}^J P(v^j_n(t)) + P(\tilde{W}^J_n(t))+o_{J,n}(1).
		\end{align}
		Here we have used the fact that $P(\tilde{W}^J_n(t)) = o_{J,n}(1)$ uniformly on $0\leq t \leq T^1$. In fact, by the Duhamel formula and Lemma \ref{lem-non-est}, we have
		\[
		\|\tilde{W}^J_n(t)\|_{S(\dot{H}^{\gamc})} \leq \|e^{it\Delta} \tilde{W}^J_n\|_{S(\dot{H}^{\gamc})} + C \|\tilde{W}^J_n(t)\|^\theta_{L^\infty_t(\R, H^1_x)} \|\tilde{W}^J_n(t)\|^{\alpha+1-\theta}_{S(\dot{H}^{\gamc})}
		\]
		for some $\theta>0$ sufficiently small. Since $\|\tilde{W}^J_n(t)\|_{L^\infty_t(\R,H^1_x)} \lesssim 1$ (by the small data theory), the continuity argument together with \eqref{eq:remainder} imply
		\begin{align} \label{eq:tilde-W-Jn-t}
		\lim_{J\rightarrow \infty} \left[ \lim_{n\rightarrow \infty} \|\tilde{W}^J_n(t)\|_{S(\dot{H}^{\gamc})}\right] =0.
		\end{align}
		Thanks to \eqref{eq:tilde-W-Jn-t}, Strichartz estimates, and \eqref{eq:tilde-W-Jn}, we have
		\[
		\lim_{J \rightarrow \infty}\left[\lim_{n\rightarrow \infty} \|\tilde{W}^J_n(t)\|_{L^\infty_t(\R, L^{\frac{2N}{N-2\gamc}}_x)}\right] =0
		\] 
		which together with \eqref{est-P} yield
		\[
		\lim_{J\rightarrow \infty} \left[\lim_{n\rightarrow \infty} \sup_{t\in \R} P(\tilde{W}^J_n(t))\right] =0.
		\]
		Moreover, by the conservation of energy, we have
		\begin{align}
		E(u_n(t)) = E(u_{n,0}) &= \sum_{j=1}^J E(v^j_n(0)) + E(\tilde{W}^J_n) + o_n(1) \nonumber \\
		&= \sum_{j=1}^J E(v^j_n(t)) + E(\tilde{W}^J_n(t)) + o_{J,n}(1). \label{proof-3}
		\end{align}
		Collecting \eqref{proof-1}, \eqref{proof-2}, and \eqref{proof-3}, we prove \eqref{eq:pytha-expan-proof}. The proof is complete. 
	\end{proof}
	We come back to the proof of Proposition \ref{prop-scat}. We will consider two cases.
	
	\vspace{3mm}
	
	\noindent {\bf Case 1. More than one non-zero profiles.} We have 
	\[
	M(v^j_n(t)) = M(v^j_n(0)) = M(e^{-it^j_n\Delta} \psi^j) = M(\psi^j) <1, \quad \forall j \geq 1.
	\]
	By \eqref{eq:cond-un} and \eqref{eq:pytha-expan-P}, we have
	\[
	\sup_{t\in [0,\infty)} P(v^j_n(t)) [M(v^j_n(t))]^{\sigc} <A, \quad \forall j \geq 1.
	\]
	Here we note that by \eqref{eq:pytha-expan-grad}, $\|\nabla v^j_n(t)\|_{L^2}$ is bounded uniformly which implies $v^j_n$ exists globally in time. By Lemma \ref{lem-coer}, we have $E(v^j_n(t))\geq 0$, hence
	\[
	E(v^j_n(t)) [M(v^j_n(t))]^{\sigc} <\deltc, \quad \forall j \geq 1.
	\]
	By Item (1) (see after \eqref{eq:deltc}), we have
	\[
	\|v^j_n\|_{S([0,\infty), \dot{H}^{\gamc})} <\infty, \quad \forall j \geq 1.
	\]
	We can approximate $u_n$ by
	\[
	u^J_n(t,x) := \sum_{j=1}^J v^j_n(t)
	\]
	and get for $J$ sufficiently large that
	\[
	\|u_n\|_{S([0,\infty),\dot{H}^{\gamc})} <\infty
	\]
	which is a contradiction.
	
	\vspace{3mm}
	
	\noindent {\bf Case 2. Only one non-zero profile.} We must have only one non-zero profile, i.e.,
	\[
	u_{n,0}(x) = e^{-it^1_n\Delta} \psi^1(x-x^1_n) + W_n(x), \quad \lim_{n\rightarrow \infty} \|e^{it\Delta} W_n\|_{S([0,\infty),\dot{H}^{\gamc})} =0.
	\]
	We note that $t^1_n$ cannot tend to $-\infty$. Indeed, if $t^1_n\rightarrow -\infty$, then we have
	\[
	\|e^{it\Delta} u_{n,0}\|_{S([0,\infty),\dot{H}^{\gamc})} \leq \|e^{it\Delta} \psi^1\|_{S([-t^1_n, \infty), \dot{H}^{\gamc})} + \|e^{it\Delta} W_n\|_{S([0,\infty), \dot{H}^{\gamc})} \rightarrow 0
	\]
	as $n\rightarrow \infty$. By the Duhamel formula, Lemma \ref{lem-non-est}, and the continuity argument, $\|u_n\|_{S([0,\infty),\dot{H}^{\gamc})} <\infty$ for $n$ sufficiently large which is a contradiction.
	
	We claim that $x^1_n \equiv 0$. Otherwise, if $|x^1_n| \rightarrow \infty$, then, by Lemma \ref{lem-non-prof-dive-spac}, for $n$ large, there exist global solutions $v_n$ to \eqref{eq:inls} satisfying $v_n(0,x) = e^{-it^1_n\Delta} \psi^1(x-x^1_n)$. Moreover, $v_n$ scatters in $H^1$ in both directions. In particular, $\|v_n\|_{S(\dot{H}^{\gamc})} <\infty$. Again, by the long time perturbation, we show that $\|u_n\|_{S([0,\infty),\dot{H}^{\gamc})} <\infty$ for $n$ sufficiently large which is a contradiction. 
	
	Let $v^1$ be the nonlinear profile associated to $\psi^1$ and $t^1_n$, we have
	\[
	u_{n,0}(x) = v^1(-t^1_n,x) + \tilde{W}_n(x).
	\]
	Set $v^1_n(t) = v^1(t-t^1_n)$. Arguing as above, we have
	\[
	M(v^1_n(t)) \leq 1, \quad \sup_{t\in [0,\infty)} P(v^1_n(t)) \leq A, \quad E(v^1_n(t))\leq \deltc, \quad \lim_{n\rightarrow \infty} \|\tilde{W}_n(t)\|_{S(\dot{H}^{\gamc})}=0.
	\]
	We infer that $M(v^1_n(t))=1$ and $E(v^1_n(t)) =\deltc$. Otherwise, if $M(v^1_n(t))<1$, then
	\[
	\sup_{t\in [0,\infty)} P(v^1_n(t)) [M(v^1_n(t))]^{\sigc}<A, \quad E(v^1_n) [M(v^1_n)]^{\sigc} <\deltc.
	\]
	By Item (1) (see again after \eqref{eq:deltc}), we have $\|v^1_n\|_{S([0,\infty),\dot{H}^{\gamc})}<\infty$. Thus we get a contradiction by the long time perturbation argument.
	
	Now we define $u_{\ct}$ the solution to \eqref{eq:inls} with initial data $\left. u_{\ct}\right|_{t=0} = v^1(0)$. We have
	\begin{align*}
	M(u_{\ct}) &= M(v^1(0)) = M(v^1(t-t^1_n)) = M(v^1_n(t)) =1, \\
	E(u_{\ct}) &= E(v^1(0)) = E(v^1(t-t^1_n)) = E(v^1_n(t)) = \deltc.
	\end{align*}
	Moreover, 
	\[
	\sup_{t\in [0,\infty)} P(u_{\ct}(t)) = \sup_{t \in [0,\infty)} P(v^1(t)) = \sup_{t\in [t^1_n,\infty)} P(v^1(t-t^1_n)) = \sup_{t\in [t^1_n,\infty)} P(v^1_n(t)) \leq A.
	\]
	By the definition of $\deltc$, we must have $\|u_{\ct}\|_{S([0,\infty), \dot{H}^{\gamc})} =\infty$. This shows \eqref{eq:uc}. 
	
	By the same argument as in the proof of \cite[Proposition 6.3]{FG-BBMS}, we show that the set
	\[
	\mathcal{K}:= \left\{u_{\ct}(t) \ : \ t \in [0,\infty)\right\}
	\]
	is precompact in $H^1$. 
	
	\vspace{3mm}
	
	\noindent {\bf Step 3. Exclusion of the critical solution.} Thanks to the above compactness result, the standard rigidity argument using localized virial estimates and Lemma \ref{lem-coer} shows that $u_{\ct} \equiv 0$ which contradicts \eqref{eq:uc}. We refer the reader to \cite[Section 7]{FG-BBMS} for more details. The proof of Proposition \ref{prop-scat} is now complete. This also ends the proof of Theorem \ref{theo-scat-crite}.
	\hfill $\Box$

	\section{Blow-up criterion}
	\label{sec:blow}
	\setcounter{equation}{0}
	
	In this section, we give the proof of the blow-up criterion given in Theorem \ref{theo-blow-crite}. Let us recall the following virial identity (see e.g., \cite{Dinh-NA}).
	
	\begin{lemma} \label{lem-viri-iden}
		Let $\varphi: \R^N\rightarrow \R$ be a sufficiently smooth and decaying function. Let $u$ be a solution to \eqref{eq:inls} defined on the maximal forward time interval of existence $[0,T^*)$. Define
		\begin{align} \label{defi-V-varphi}
		V_{\varphi}(t):= \int \varphi(x) |u(t,x)|^2 dx.
		\end{align}
		Then we have for all $t\in [0,T^*)$,
		\[
		V'_{\varphi}(t) = 2 \ima \int \nabla \varphi(x) \cdot \nabla u(t,x)  \overline{u}(t,x) dx 
		\]
		and
		\begin{align*}
		V''_{\varphi}(t) &= -\int \Delta^2 \varphi(x) |u(t,x)|^2 dx + 4 \sum_{j,k=1}^N \rea \int \partial^2_{jk} \varphi (x) \partial_j \overline{u}(t,x) \partial_k u(t,x) dx \\
		&\mathrel{\phantom{=}} - \frac{2\alpha}{\alpha+2} \int |x|^{-b} \Delta \varphi(x) |u(t,x)|^{\alpha+2} dx +\frac{4}{\alpha+2} \int \nabla \varphi(x) \cdot \nabla(|x|^{-b}) |u(t,x)|^{\alpha+2} dx.
		\end{align*}
	\end{lemma}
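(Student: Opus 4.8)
The plan is to derive both identities directly from the equation by formal differentiation, and to relegate the regularity justification to a separate approximation step at the end.

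First I would establish the first-order identity. Writing the equation as $\partial_t u = i(\Delta u + |x|^{-b}|u|^\alpha u)$, a short computation gives the local mass-conservation law
\[
\partial_t |u|^2 = 2\,\rea(\overline{u}\,\partial_t u) = -2\,\ima(\overline{u}\,\Delta u) = -2\,\diver\big(\ima(\overline{u}\,\nabla u)\big),
\]
where the potential term drops out because $\rea\big(i|x|^{-b}|u|^{\alpha+2}\big)=0$. Multiplying by $\varphi$, integrating over $\R^N$, and integrating by parts once (the decay of $\varphi$ killing the boundary terms) yields
\[
V'_{\varphi}(t) = -2\int \varphi\,\diver\big(\ima(\overline{u}\,\nabla u)\big)\,dx = 2\int \nabla\varphi\cdot\ima(\overline{u}\,\nabla u)\,dx = 2\,\ima\int \nabla\varphi\cdot\nabla u\,\overline{u}\,dx,
\]
which is the first claimed formula.

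For the second identity I would differentiate the momentum density $J_k := \ima(\overline{u}\,\partial_k u)$ in time and substitute the equation for both $\partial_t u$ and $\partial_t\overline{u}$. Using $\ima(iz)=\rea(z)$ one obtains $\partial_t J_k = L_k + N_k$, so that $V''_{\varphi}(t) = 2\int \nabla\varphi\cdot(L+N)\,dx$, and the two parts are treated separately. The linear part $L_k = \rea(\overline{u}\,\partial_k\Delta u) - \rea(\Delta\overline{u}\,\partial_k u)$ is the universal NLS virial contribution: repeated integration by parts produces exactly $-\int\Delta^2\varphi\,|u|^2\,dx + 4\sum_{j,k}\rea\int\partial^2_{jk}\varphi\,\partial_j\overline{u}\,\partial_k u\,dx$, a computation I would either carry out or quote from the references (e.g. \cite{Dinh-NA, Cazenave}). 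The nonlinear part is where the weight $|x|^{-b}$ enters. Using $\rea(\overline{u}\,\partial_k u)=\tfrac12\partial_k|u|^2$ together with $|u|^\alpha\partial_k|u|^2 = \tfrac{2}{\alpha+2}\partial_k|u|^{\alpha+2}$, a direct algebraic simplification gives
\[
N_k = \frac{\alpha}{\alpha+2}\,|x|^{-b}\,\partial_k|u|^{\alpha+2} + \partial_k(|x|^{-b})\,|u|^{\alpha+2}.
\]
Pairing with $2\,\partial_k\varphi$, summing in $k$, and integrating the first term by parts — so that $\diver(|x|^{-b}\nabla\varphi) = \nabla(|x|^{-b})\cdot\nabla\varphi + |x|^{-b}\Delta\varphi$ appears — collapses the coefficients $\tfrac{2\alpha}{\alpha+2}$ and $2$ into $-\tfrac{2\alpha}{\alpha+2}$ and $2-\tfrac{2\alpha}{\alpha+2}=\tfrac{4}{\alpha+2}$, yielding precisely the last two terms in the statement.

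The main obstacle is rigor rather than algebra: for merely $H^1$ solutions the steps above — differentiating under the integral sign, and the integrations by parts involving $\Delta^2\varphi$ and the singular factor $\nabla(|x|^{-b})\sim|x|^{-b-1}$ — are only formal. I would justify them by a standard regularization argument: approximate $u_0$ by smooth, rapidly decaying data, for which the corresponding solution is regular enough that every manipulation is classical and the boundary terms vanish genuinely; derive the identities at that level; and pass to the limit using the local well-posedness and continuous dependence of Lemma \ref{lem-local-theory} together with the a priori $H^1$ bounds, the local integrability of $|x|^{-b}$ (valid since $b<N$), and the boundedness of the derivatives of the decaying weight $\varphi$. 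Care is needed near the origin, where one may split the spatial integrals across a small ball and absorb the singular weight using the Hardy-type control already employed in Lemma \ref{lem-non-est}.
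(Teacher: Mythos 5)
Your proposal is correct, and the coefficient bookkeeping checks out: the nonlinear momentum density is indeed $N_k=|u|^2\partial_k\big(|x|^{-b}|u|^\alpha\big)=\tfrac{\alpha}{\alpha+2}|x|^{-b}\partial_k|u|^{\alpha+2}+\partial_k(|x|^{-b})|u|^{\alpha+2}$, and after pairing with $2\partial_k\varphi$ and one integration by parts the coefficients collapse as you say, since $2-\tfrac{2\alpha}{\alpha+2}=\tfrac{4}{\alpha+2}$. Note that the paper itself does not prove this lemma at all — it is recalled with a citation to \cite{Dinh-NA} — so there is no internal argument to compare against; your derivation (local conservation law for $V'_\varphi$, time-differentiation of the momentum density split into the universal linear virial part and the weighted nonlinear part, followed by a density/regularization argument to justify the formal steps for $H^1$ solutions) is precisely the standard computation one finds in that reference. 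The only point deserving care, which you flag, is the integrability of the term $\nabla\varphi\cdot\nabla(|x|^{-b})|u|^{\alpha+2}\sim|x|^{-b-1}|u|^{\alpha+2}$ near the origin; for the weights actually used later in the paper ($\varphi=|x|^2$ and the radial truncations $\varphi_R$, for which $\nabla\varphi\cdot\nabla(|x|^{-b})=-b\,\tfrac{\varphi'(r)}{r}|x|^{-b}$) this singularity is only $O(|x|^{-b})$ and causes no difficulty.
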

	
	\begin{remark} \label{rem-viri}
		(1) In the case $\varphi(x) = |x|^2$, we have
		\[
		\frac{d^2}{dt^2} \|xu(t)\|^2_{L^2} = 8 G(u(t)), 
		\]
		where $G(f)$ is as in \eqref{eq:G}.
		
		(2) In the case $\varphi$ is radially symmetric, it follows from 
		\[
		\partial_j = \frac{x_j}{r} \partial_r, \quad \partial^2_{jk} = \left( \frac{\delta_{jk}}{r} - \frac{x_j x_k}{r^3} \right) \partial_r + \frac{x_j x_k}{r^2} \partial^2_r
		\]
		that
		\begin{align*}
		\sum_{j,k=1}^N \rea &\int \partial^2_{jk} \varphi(x) \partial_j \overline{u}(t,x) \partial_k u(t,x) dx \\
		&= \int \frac{\varphi'(r)}{r} |\nabla u(t,x)|^2 dx + \int \left(\frac{\varphi''(r)}{r^2} - \frac{\varphi'(r)}{r^3} \right) |x \cdot \nabla u(t,x)|^2 dx.
		\end{align*}
		In particular, we have
		\begin{align} \label{eq:viri-rad}
		\begin{aligned}
		&V''_{\varphi}(t) \\
		&= - \int \Delta^2 \varphi(x) |u(t,x)|^2 dx + 4 \int \frac{\varphi'(r)}{r} |\nabla u(t,x)|^2 dx + 4 \int \left(\frac{\varphi''(r)}{r^2}-\frac{\varphi'(r)}{r^3}\right) |x \cdot \nabla u(t,x)|^2 dx\\
		&\mathrel{\phantom{=}}  - \frac{2\alpha}{\alpha+2} \int |x|^{-b} \Delta \varphi(x) |u(t,x)|^{\alpha+2} - \frac{4b}{\alpha+2} \int |x|^{-b} \frac{\varphi'(r)}{r} |u(t,x)|^{\alpha+2} dx.
		\end{aligned}
		\end{align}
		(3) Denote $x=(y,x_N)$ with $y=(x_1, \cdots, x_{N-1}) \in \R^{N-1}$ and $x_N \in \R$. Let $\psi:\R^{N-1} \rightarrow \R$ be a sufficiently smooth decaying function. Set $\varphi(x) = \varphi(y,x_N) = \psi(y)+x_N^2$. We have
		\begin{align*}
		V'_\varphi(t) = 2 \ima \int \left(\nabla_y \psi(y) \cdot \nabla_y u(t,x) + 2x_N \partial_N u(t,x)\right) \overline{u}(t,x)  dx
		\end{align*}
		and
		\begin{align*}
		V''_\varphi(t) &= -\int \Delta^2_y \psi(y) |u(t,x)|^2 dx + 4 \sum_{j,k=1}^{N-1} \rea \int \partial^2_{jk}\psi(y) \partial_j \overline{u}(t,x) \partial_k u(t,x) dx \\
		&\mathrel{\phantom{=}} - \frac{2\alpha}{\alpha+2} \int |x|^{-b} \Delta_y \psi(y) |u(t,x)|^{\alpha+2} dx -\frac{4b}{\alpha+2} \int \nabla_y \psi(y) \cdot y |x|^{-b-2} |u(t,x)|^{\alpha+2} dx \\
		&\mathrel{\phantom{=}} + 8 \|\partial_N u(t)\|^2_{L^2} -\frac{4\alpha}{\alpha+2} \int |x|^{-b} |u(t,x)|^{\alpha+2} dx - \frac{8b}{\alpha+2} \int x_N^2|x|^{-b-2} |u(t,x)|^{\alpha+2}dx.
		\end{align*}
	\end{remark}
	
	Let $\chi$ be a smooth radial function satisfying
	\begin{align*} 
	\chi(x) = \chi(r) = \left\{
	\begin{array}{ccc}
	r^2 &\text{if}& r\leq 1, \\
	0 &\text{if} & r\geq 2, 
	\end{array}
	\right.
	\quad 
	\chi''(r) \leq 2 \quad \forall r = |x| \geq 0.
	\end{align*}
	Given $R>1$, we define the radial function
	\begin{align} \label{eq:varphi-R}
	\varphi_R(x):= R^2 \chi(x/R).
	\end{align}
	We have the following localized virial estimate.
	\begin{proposition} \label{prop-viri-est}
		Let $N\geq 1$, $0<b<\min \{2,N\}$, and $\frac{4-2b}{N} <\alpha <\alpha(N)$.  Let $u$ be a solution to \eqref{eq:inls} defined on the maximal forward time interval of existence $[0,T^*)$. Let $\varphi_R$ be as in \eqref{eq:varphi-R} and define $V_{\varphi_R}(t)$ as in \eqref{defi-V-varphi}. Then we have for all $t\in [0,T^*)$,
		\[
		V'_{\varphi_R} (t) = 2\ima \int  \nabla \varphi_R(x) \cdot \nabla u(t,x) \overline{u}(t,x) dx
		\]
		and 
		\begin{align*}
		V''_{\varphi_R}(t) \leq 8 G(u(t)) + C R^{-2}+ C R^{-b} \|u(t)\|^{\alpha+2}_{H^1},
		\end{align*}
		where $G$ is as in \eqref{eq:G} and some constant $C>0$ independent of $R$. 
	\end{proposition}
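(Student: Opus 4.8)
The plan is to apply the radial virial identity \eqref{eq:viri-rad} with $\varphi = \varphi_R$ and to compare each resulting term with the corresponding piece of $8G(u(t))$, showing that every discrepancy is either favorably signed or of the advertised size. The formula for $V'_{\varphi_R}$ is immediate from Lemma \ref{lem-viri-iden}, so I concentrate on the second derivative. Since $\varphi_R$ is smooth and compactly supported, $V_{\varphi_R}$ is well defined for any $H^1$-solution (no finite-variance hypothesis is needed) and \eqref{eq:viri-rad} applies directly.

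First I would record the elementary scaling relations $\varphi_R'(r) = R\chi'(r/R)$, $\varphi_R''(r) = \chi''(r/R)$, $\Delta\varphi_R(x) = (\Delta\chi)(x/R)$, and $\Delta^2\varphi_R(x) = R^{-2}(\Delta^2\chi)(x/R)$. Because $\chi(s) = s^2$ for $s\le 1$, one has $\varphi_R(x) = |x|^2$ on $\{|x|\le R\}$, so there $\varphi_R'' \equiv 2$, $\varphi_R'(r)/r \equiv 2$, $\Delta\varphi_R \equiv 2N$ and $\Delta^2\varphi_R \equiv 0$; moreover all derivatives of $\varphi_R$ vanish on $\{|x|\ge 2R\}$. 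Integrating the hypothesis $\chi''\le 2$ from $0$ (using $\chi'(0)=0$) gives $\chi'(s)\le 2s$, i.e. $\varphi_R'(r)/r\le 2$ for all $r$; together with $\varphi_R''\le 2$ this is the source of the favorable sign below.

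Next I would recast the kinetic part of \eqref{eq:viri-rad}. Writing $|x\cdot\nabla u|^2 = r^2|\partial_r u|^2$ and splitting $|\nabla u|^2 = |\partial_r u|^2 + |\nabla_\tau u|^2$ into radial and tangential parts (so $|\nabla_\tau u|^2 := |\nabla u|^2 - |\partial_r u|^2\ge 0$), the two kinetic terms collapse to $4\int \varphi_R''|\partial_r u|^2 + 4\int(\varphi_R'/r)|\nabla_\tau u|^2$. Subtracting the kinetic part $8\|\nabla u\|_{L^2}^2$ of $8G(u)$ leaves
\[
4\int\left(\varphi_R'' - 2\right)|\partial_r u|^2\,dx + 4\int\left(\frac{\varphi_R'}{r} - 2\right)|\nabla_\tau u|^2\,dx,
\]
which vanishes on $\{|x|\le R\}$ and is everywhere $\le 0$ by the two bounds just established. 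Thus the entire kinetic defect is nonpositive and may simply be discarded.

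It remains to handle the potential and bilaplacian terms. On $\{|x|\le R\}$ the substitutions $\Delta\varphi_R = 2N$ and $\varphi_R'/r = 2$ reproduce exactly $-\frac{4(N\alpha+2b)}{\alpha+2}\int_{|x|\le R}|x|^{-b}|u|^{\alpha+2}$, which is the potential part of $8G(u)$ restricted to that ball; the leftover, namely the potential integrals over $\{|x|>R\}$ together with the completion of $P(u)$ over that region, is bounded in modulus by $C\int_{|x|>R}|x|^{-b}|u|^{\alpha+2}$ because $\Delta\varphi_R$ and $\varphi_R'/r$ are uniformly bounded. On $\{|x|>R\}$ I estimate $|x|^{-b}\le R^{-b}$ and apply the Sobolev embedding $H^1\hookrightarrow L^{\alpha+2}$ — valid since $\alpha+2<2^*$ by $\alpha<\alpha(N)$ — to obtain $CR^{-b}\|u\|_{H^1}^{\alpha+2}$. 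Finally $\bigl|\int\Delta^2\varphi_R\,|u|^2\bigr|\le \|\Delta^2\varphi_R\|_{L^\infty}\|u\|_{L^2}^2\lesssim R^{-2}M(u_0)$ by conservation of mass. Assembling the nonpositive kinetic defect, the $O(R^{-2})$ bilaplacian term, and the $O\bigl(R^{-b}\|u\|_{H^1}^{\alpha+2}\bigr)$ potential remainder yields the claimed inequality. The computation is routine once \eqref{eq:viri-rad} is in place; the one point demanding care — the closest thing to an obstacle — is the sign analysis of the kinetic defect, which rests on deducing $\varphi_R'/r\le 2$ from the sole hypothesis $\chi''\le 2$ and on the radial/tangential decomposition that exhibits $\varphi_R''$ and $\varphi_R'/r$ as the coefficients of the nonnegative densities $|\partial_r u|^2$ and $|\nabla_\tau u|^2$.
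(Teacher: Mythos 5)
Your proposal is correct and follows essentially the same route as the paper: expand $V''_{\varphi_R}$ via \eqref{eq:viri-rad}, write it as $8G(u(t))$ plus defects, show the kinetic defect is nonpositive using $\varphi''_R \leq 2$ and $\varphi'_R(r)/r \leq 2$, bound the bilaplacian term by $CR^{-2}$ via mass conservation, and bound the potential defect (supported in $\{|x|\geq R\}$) by $CR^{-b}\|u(t)\|^{\alpha+2}_{H^1}$ using $|x|^{-b}\leq R^{-b}$ and Sobolev embedding. The only deviation is cosmetic: you establish the kinetic-defect sign through the radial/tangential splitting $|\nabla u|^2 = |\partial_r u|^2 + |\nabla_\tau u|^2$, whereas the paper uses the Cauchy--Schwarz bound $|x\cdot\nabla u|\leq r|\nabla u|$; both rest on the same two pointwise inequalities for $\varphi_R$.
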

	
	\begin{proof}
		It follows from \eqref{eq:viri-rad} that
		\begin{align*}
		V''_{\varphi_R}(t) &= 8 G(u(t)) - 8\|\nabla u(t)\|^2_{L^2} + \frac{4(N\alpha+2b)}{\alpha+2} \int |x|^{-b} |u(t,x)|^{\alpha+2} dx \\
		&\mathrel{\phantom{=}} - \int \Delta^2 \varphi_R(x) |u(t,x)|^2 dx + 4 \int \frac{\varphi'_R(r)}{r} |\nabla u(t,x)|^2 dx \\
		&\mathrel{\phantom{=}} + 4 \int \left( \frac{\varphi''_R(r)}{r^2} - \frac{\varphi'_R(r)}{r^3}\right) |x\cdot \nabla u(t,x)|^2 dx \\ 
		&\mathrel{\phantom{=}} -\frac{2\alpha}{\alpha+2} \int |x|^{-b} \Delta \varphi_R(x) |u(t,x)|^{\alpha+2} dx - \frac{4b}{\alpha+2} \int |x|^{-b} \frac{\varphi'_R(r)}{r} |u(t,x)|^{\alpha+2} dx.
		\end{align*}
		As $\|\Delta^2\varphi_R\|_{L^\infty} \lesssim R^{-2}$, the conservation of mass implies that
		\[
		\left|\int \Delta^2 \varphi_R(x) |u(t,x)|^2 dx \right| \lesssim R^{-2} \|u(t)\|^2_{L^2} \lesssim R^{-2}.
		\]
		By the Cauchy-Schwarz inequality $|x\cdot \nabla u| \leq |x| |\nabla u| = r |\nabla u|$ and the fact $\varphi''_R(r) \leq 2$, we see that
		\begin{align*}
		4 \int &\frac{\varphi'_R(r)}{r} |\nabla u(t,x)|^2 dx + 4 \int \left( \frac{\varphi''_R(r)}{r^2} - \frac{\varphi'_R(r)}{r^3}\right) |x\cdot \nabla u(t,x)|^2 dx - 8 \|\nabla u(t)\|^2_{L^2} \\
		&\leq  4\int \left(\frac{\varphi'_R(r)}{r} -2 \right) |\nabla u(t,x)|^2 dx + 4 \int \frac{1}{r^2}\left( 2 - \frac{\varphi'_R(r)}{r}\right) |x\cdot \nabla u(t,x)|^2 dx \leq 0.
		\end{align*}
		Moreover,
		\begin{align*}
		&\frac{4(N\alpha+2b)}{\alpha+2} \int |x|^{-b} |u(t,x)|^{\alpha+2} dx ~- \frac{2\alpha}{\alpha+2} \int |x|^{-b} \Delta \varphi_R(x) |u(t,x)|^{\alpha+2} dx \\
		&\mathrel{\phantom{\frac{4(N\alpha+2b)}{\alpha+2} \int |x|^{-b} |u(t,x)|^{\alpha+2} dx}}- \frac{4b}{\alpha+2} \int |x|^{-b} \frac{\varphi'_R(r)}{r} |u(t,x)|^{\alpha+2} dx  \\
		&= \frac{2\alpha}{\alpha+2} \int |x|^{-b} (2N-\Delta\varphi_R(x)) |u(t,x)|^{\alpha+2} dx + \frac{4b}{\alpha+2} \int |x|^{-b} \left(2 -\frac{\varphi'_R(r)}{r}\right) |u(t,x)|^{\alpha+2} dx.
		\end{align*}
		Since $\Delta \varphi_R \leq 2N$, $\frac{\varphi'_R(r)}{r} \leq 2$, $\Delta \varphi_R(x) =2N$, and $\frac{\varphi'_R(r)}{r} =2$ for $r=|x| \leq R$, the above quantity is bounded by 
		\[
		C\int_{|x|\geq R} |x|^{-b} |u(t)|^{\alpha+2} dx \leq C R^{-b} \|u(t)\|^{\alpha+2}_{L^{\alpha+2}} \leq C R^{-b} \|u(t)\|^{\alpha+2}_{H^1},
		\]
		where the last inequality follows from the Sobolev embedding as $\alpha <\alpha(N)$. Collecting the above estimates, we end the proof.
	\end{proof}
	
	\noindent {\it Proof of Theorem \ref{theo-blow-crite}.}
	Let $u:[0,T^*) \times \R^N \rightarrow \C$ be a solution to \eqref{eq:inls} satisfying \eqref{blow-crite}. If $T^*<\infty$, then we are done. If $T^*=\infty$, then we show that there exists $t_n \rightarrow \infty$ such that $\|\nabla u(t_n)\|_{L^2} \rightarrow \infty$ as $n\rightarrow \infty$. Assume by contradiction that  it does not hold, i.e., $\sup_{t\in [0,\infty)} \|\nabla u(t)\|_{L^2} \leq C_0$	for some $C_0>0$. By the conservation of mass, we have
	\begin{align}  \label{eq:bound-solu}
	\sup_{t\in [0,\infty)} \|u(t)\|_{H^1} \leq C_1
	\end{align}
	for some $C_1>0$.
	
	By Proposition \ref{prop-viri-est}, \eqref{blow-crite}, and \eqref{eq:bound-solu}, we have for all $t \in [0,\infty)$,
	\begin{align*}
	V''_{\varphi_R}(t) \leq 8 G(u(t)) + CR^{-2} + C R^{-b} \|u(t)\|^{\alpha+2}_{L^2} \leq -8\delta + CR^{-2} + C R^{-b} C_1^{\alpha+2}.
	\end{align*}
	By taking $R>1$ sufficiently large, we have for all $t\in [0,\infty)$,
	\[
	V''_{\varphi_R}(t) \leq -4\delta. 
	\]
	Integrating this estimate, there exists $t_0>0$ sufficiently large such that $V_{\varphi_R}(t_0)<0$ which is impossible. This finishes the first part of Theorem \ref{theo-blow-crite}.
	
	If we assume in addition that $u$ has finite variance, i.e., $u(t) \in L^2(|x|^2 dx)$ for all $t\in [0,T^*)$, then we have $T^*<\infty$. In fact, it follows from Remark \ref{rem-viri} and \eqref{blow-crite} that
	\[
	\frac{d^2}{dt^2} \|xu(t)\|^2_{L^2} =8 G(u(t)) \leq -8\delta
	\]
	for all $t\in [0,T^*)$. The convexity argument of Glassey \cite{Glassey} implies $T^*<\infty$. 
	\hfill $\Box$

	\section{Long time dynamics}
	\label{sec:dynamic}
	\setcounter{equation}{0}
	
	In this section, we give the proofs of long time dynamics of $H^1$-solutions given in Theorems \ref{theo-dyna-below}, \ref{theo-dyna-at} and \ref{theo-dyna-above}. 
	
	\noindent {\it Proof of Theorem \ref{theo-dyna-below}.} We will consider separately two cases. 
	
	\vspace{3mm}
	
	\noindent {\bf Case 1. Global existence and energy scattering.} Let $u_0 \in H^1$ satisfy \eqref{eq:ener-below} and \eqref{eq:grad-glob-below}. Let us prove \eqref{eq:est-solu-glob-below}. To see this, we first claim that there exists $\rho=\rho(u_0,Q)>0$ such that
	\begin{align} \label{eq:claim-below-1}
	\|\nabla u(t)\|_{L^2} \|u(t)\|^{\sigc}_{L^2} \leq (1-\rho) \|\nabla Q\|_{L^2} \|Q\|^{\sigc}_{L^2}
	\end{align}
	for all $t\in (-T_*,T^*)$. We assume \eqref{eq:claim-below-1} for the moment and prove \eqref{eq:est-solu-glob-below}. By \eqref{eq:GN-ineq} and \eqref{eq:claim-below-1}, we have
	\begin{align*}
	P(u(t)) [M(u(t))]^{\sigc} &\leq C_{\opt} \|\nabla u(t)\|^{\frac{N\alpha+2b}{2}}_{L^2} \|u(t)\|^{\frac{4-2b-(N-2)\alpha}{2} + 2\sigc}_{L^2} \\
	&= C_{\opt} \left(\|\nabla u(t)\|_{L^2}\|u(t)\|^{\sigc}_{L^2} \right)^{\frac{N\alpha+2b}{2}} \\
	&\leq C_{\opt} (1-\rho)^{\frac{N\alpha+2b}{2}} \left(\|\nabla Q\|_{L^2} \|Q\|^{\sigc}_{L^2} \right)^{\frac{N\alpha+2b}{2}}
	\end{align*}
	for all $t\in (-T_*,T^*)$. By \eqref{eq:opt-cons} and \eqref{eq:poho-iden}, we get
	\[
	P(u(t)) [M(u(t))]^{\sigc} \leq \frac{2(\alpha+2)}{N\alpha+2b} (1-\rho)^{\frac{N\alpha+2b}{2}} \left(\|\nabla Q\|_{L^2} \|Q\|^{\sigc}_{L^2}\right)^2 = (1-\rho)^{\frac{N\alpha+2b}{2}} P(Q) [M(Q)]^{\sigc}
	\]
	for all $t\in (-T_*,T^*)$ which shows \eqref{eq:est-solu-glob-below}. By Theorem \ref{theo-scat-crite}, the solution exists globally in time. Moreover, if $N\geq 2$ and $0<b<\min \left\{2,\frac{N}{2}\right\}$, then the solution scatters in $H^1$ in both directions. 
	
	Let us now prove the claim \eqref{eq:claim-below-1}. By the definition of energy and \eqref{eq:GN-ineq}, we have
	\begin{align}
	E(u(t)) [M(u(t))]^{\sigc} &\geq \frac{1}{2} \left( \|\nabla u(t)\|_{L^2} \|u(t)\|^{\sigc}_{L^2}\right)^2 - \frac{C_{\opt}}{\alpha+2} \|\nabla u(t)\|^{\frac{N\alpha+2b}{2}}_{L^2} \|u(t)\|^{\frac{4-2b-(N-2)\alpha}{2}+2\sigc}_{L^2} \nonumber \\
	&= F \left( \|\nabla u(t)\|_{L^2} \|u(t)\|^{\sigc}_{L^2} \right), \label{eq:F}
	\end{align}
	where 
	\[
	F(\lambda):= \frac{1}{2} \lambda^2 - \frac{C_{\opt}}{\alpha+2} \lambda^{\frac{N\alpha+2b}{2}}.
	\]
	Using \eqref{eq:poho-iden}, \eqref{eq:opt-cons} and \eqref{eq:prop-Q}, we see that
	\[
	F \left(\|\nabla Q\|_{L^2} \|Q\|^{\sigc}_{L^2} \right) = \frac{N\alpha-4+2b}{2(N\alpha+2b)} \left(\|\nabla Q\|_{L^2} \|Q\|^{\sigc}_{L^2} \right)^2 = E(Q) [M(Q)]^{\sigc}.
	\]
	It follows from \eqref{eq:ener-below}, \eqref{eq:F} and the conservation of mass and energy that
	\[
	F\left(\|\nabla u(t)\|_{L^2} \|u(t)\|^{\sigc}_{L^2} \right) \leq E(u_0) [M(u_0)]^{\sigc} < E(Q) [M(Q)]^{\sigc} = F\left(\|\nabla Q\|_{L^2} \|Q\|^{\sigc}_{L^2} \right)
	\]
	for all $t\in (-T_*,T^*)$. By \eqref{eq:grad-glob-below}, the continuity argument implies 
	\begin{align} \label{eq:claim-below-1-proof-1}
	\|\nabla u(t)\|_{L^2} \|u(t)\|^{\sigc}_{L^2} < \|\nabla Q\|_{L^2} \|Q\|^{\sigc}_{L^2}
	\end{align}
	for all $t\in (-T_*,T^*)$. Next, using \eqref{eq:ener-below}, we take $\vartheta=\vartheta(u_0,Q)>0$ such that
	\begin{align} \label{eq:vartheta}
	E(u_0) [M(u_0)]^{\sigc} \leq (1-\vartheta) E(Q) [M(Q)]^{\sigc}.
	\end{align}
	Using 
	\[
	E(Q)[M(Q)]^{\sigc} = \frac{N\alpha-4+2b}{2(N\alpha+2b)} \left(\|\nabla Q\|_{L^2} \|Q\|^{\sigc}_{L^2} \right)^2 = \frac{N\alpha-4+2b}{4(\alpha+2)} \left(\|\nabla Q\|_{L^2} \|Q\|^{\sigc}_{L^2}\right)^{\frac{N\alpha+2b}{2}},
	\]
	we we infer from \eqref{eq:F} and \eqref{eq:vartheta} that
	\begin{align} \label{eq:est-vartheta}
	\frac{N\alpha+2b}{N\alpha-4+2b} \left(\frac{\|\nabla u(t)\|_{L^2} \|u(t)\|^{\sigc}_{L^2}}{\|\nabla Q\|_{L^2} \|Q\|^{\sigc}_{L^2}} \right)^2 - \frac{4}{N\alpha-4+2b} \left(\frac{\|\nabla u(t)\|_{L^2} \|u(t)\|^{\sigc}_{L^2}}{\|\nabla Q\|_{L^2} \|Q\|^{\sigc}_{L^2}} \right)^{\frac{N\alpha+2b}{2}} \leq 1-\vartheta
	\end{align}
	for all $t\in (-T_*,T^*)$. Let us consider the function
	\begin{align} \label{eq:G-lambda}
	G(\lambda):= \frac{N\alpha+2b}{N\alpha-4+2b} \lambda^2 - \frac{4}{N\alpha-4+2b} \lambda^{\frac{N\alpha+2b}{2}}
	\end{align}
	with $0<\lambda<1$ due to \eqref{eq:claim-below-1-proof-1}. We see that $G$ is strictly increasing on $(0,1)$ with $G(0)=0$ and $G(1)=1$. It follows from \eqref{eq:G-lambda} that there exists $\rho>0$ depending on $\vartheta$ such that $\lambda \leq 1-\rho$ which is \eqref{eq:claim-below-1}. This finishes the first part of Theorem \ref{theo-dyna-below}.
	
	\vspace{3mm}
	
	\noindent {\bf Case 2. Blow-up.} Let $u_0 \in H^1$ satisfy \eqref{eq:ener-below} and \eqref{eq:grad-blow-below}. Let us prove \eqref{eq:est-solu-blow-below}. By the same argument as above using \eqref{eq:grad-blow-below} instead of \eqref{eq:grad-glob-below}, we have
	\begin{align} \label{eq:est-solu-blow-below-proof}
	\|\nabla u(t)\|_{L^2} \|u(t)\|^{\sigc}_{L^2} > \|\nabla Q\|_{L^2} \|Q\|^{\sigc}_{L^2}
	\end{align}
	for all $t\in (-T_*,T^*)$. Let $\vartheta$ be as in \eqref{eq:vartheta}. By the conservation laws of mass and energy together with \eqref{eq:est-solu-blow-below-proof} and \eqref{eq:prop-Q}, we have
	\begin{align*}
	G(u(t)) [M(u(t))]^{\sigc} &= \|\nabla u(t)\|^2_{L^2}\|u(t)\|^{2\sigc} - \frac{N\alpha+2b}{2(\alpha+2)} P(u(t)) [M(u(t))]^{\sigc} \\
	&=\frac{N\alpha+2b}{2} E(u(t)) [M(u(t))]^{\sigc} - \frac{N\alpha-4+2b}{4} \left(\|\nabla u(t)\|_{L^2} \|u(t)\|^{\sigc}_{L^2} \right)^2 \\
	&\leq \frac{N\alpha+2b}{2} (1-\vartheta) E(Q) [M(Q)]^{\sigc} - \frac{N\alpha-4+2b}{4} \left(\|\nabla Q\|_{L^2} \|Q\|^{\sigc}_{L^2} \right)^2 \\
	&=-\frac{N\alpha-4+2b}{4} \vartheta \left( \|\nabla Q\|_{L^2} \|Q\|^{\sigc}_{L^2}\right)^2
	\end{align*}
	for all $t\in (-T_*,T^*)$. This shows \eqref{eq:est-solu-blow-below} with 
	\[
	\delta:= \frac{N\alpha-4+2b}{4}\vartheta \|\nabla Q\|^2_{L^2} \left(\frac{M(Q)}{M(u_0)}\right)^{\sigc} >0.
	\]
	By Theorem \ref{theo-blow-crite}, the corresponding solution either blows up in finite time, or there exists a time sequence $(t_n)_{n\geq 1}$ satisfying $|t_n|\rightarrow \infty$ such that $\|\nabla u(t_n)\|_{L^2}\rightarrow \infty$ as $n\rightarrow \infty$. 
	
	\vspace{3mm}
	
	\noindent $\bullet$ {\bf Finite variance data.} If we assume in addition that $u_0 \in \Sigma$, then the corresponding solution blows up in finite time. It directly follows from Theorem \ref{theo-blow-crite}.
	
	\vspace{3mm}
	
	\noindent $\bullet$ {\bf Radially symmetric data.} If we assume in addition that $N\geq 2$, $\alpha \leq 4$, and $u_0$ is radially symmetric, then the corresponding solution blows up in finite time. This result was shown in \cite{Dinh-NA}. Note that in \cite{Dinh-NA}, $\alpha$ is assumed to be strictly smaller than 4. However, a closer look at the proof of \cite{Dinh-NA}, we see that $\alpha=4$ is allowed.
	
	\vspace{3mm}
	
	\noindent $\bullet$ {\bf Cylindrically symmetric data.} If we assume in addition that $N\geq 3$, $\alpha\leq 2$, and $u_0\in \Sigma_N$ (see \eqref{eq:Sigma-N}), then the corresponding solution blows up in finite time. To this end, let $\eta$ be a smooth radial function satisfying
	\[
	\eta(y) = \eta(\tau) =  \left\{
	\begin{array}{ccc}
	\tau^2 &\text{if}& \tau \leq 1, \\
	0 &\text{if} & \tau \geq 2, 
	\end{array}
	\right.
	\quad 
	\eta''(\tau) \leq 2, \quad \forall \tau = |y| \geq 0.
	\]
	Given $R>1$, we define the radial function
	\begin{align} \label{eq:psi-R}
	\psi_R(y) := R^2 \eta(y/R).
	\end{align}
	Set
	\begin{align} \label{eq:varphi-R-cylin}
	\varphi_R(x) := \psi_R(y) + x_N^2. 
	\end{align}
	Applying Remark \ref{rem-viri}, we have
	\begin{align*}
	V'_{\varphi_R}(t) = 2 \ima \int  \left(\nabla_y \psi_R(y) \cdot \nabla_y u(t,x) + 2x_N \partial_N u(t,x) \right) \overline{u}(t,x) dx
	\end{align*}
	and
	\begin{align*}
	V''_{\varphi_R}(t) &= -\int \Delta^2_y \psi_R(y) |u(t,x)|^2 dx + 4 \sum_{j,k=1}^{N-1} \rea \int \partial^2_{jk}\psi_R(y) \partial_j \overline{u}(t,x) \partial_k u(t,x) dx \\
	&\mathrel{\phantom{=}} - \frac{2\alpha}{\alpha+2} \int |x|^{-b} \Delta_y \psi_R(y) |u(t,x)|^{\alpha+2} dx -\frac{4b}{\alpha+2} \int |y|^2 \frac{\psi'_R(\tau)}{\tau} |x|^{-b-2} |u(t,x)|^{\alpha+2} dx \\
	&\mathrel{\phantom{=}} + 8 \|\partial_N u(t)\|^2_{L^2} -\frac{4\alpha}{\alpha+2} \int |x|^{-b} |u(t,x)|^{\alpha+2} dx - \frac{8b}{\alpha+2} \int x_N^2|x|^{-b-2} |u(t,x)|^{\alpha+2}dx.
	\end{align*} 
	We can rewrite it as
	\begin{align*}
	V''_{\varphi_R}(t) &= 8 G(u(t)) - 8 \|\nabla_y u(t)\|^2_{L^2} + \frac{4\left((N-1)\alpha +2b\right)}{\alpha+2} P(u(t)) \\
	&\mathrel{\phantom{=}} - \int \Delta^2_y \psi_R(y) |u(t,x)|^2 dx + 4\sum_{j,k=1}^{N-1} \rea \int \partial^2_{jk} \psi_R(y) \partial_j \overline{u}(t,x) \partial_k u(t,x) dx \\
	&\mathrel{\phantom{=}} - \frac{2\alpha}{\alpha+2} \int \Delta_y \psi_R(y) |x|^{-b} |u(t,x)|^{\alpha+2} dx - \frac{4b}{\alpha+2} \int |y|^2 \frac{\psi_R'(\tau)}{\tau} |x|^{-b-2} |u(t,x)|^{\alpha+2} dx \\
	&\mathrel{\phantom{=}}- \frac{8b}{\alpha+2} \int x_N^2 |x|^{-b-2} |u(t,x)|^{\alpha+2} dx.
	\end{align*}
	Rewriting it further, we get
	\begin{align*}
	V''_{\varphi_R}(t) &= 8 G(u(t)) - 8 \|\nabla_y u(t)\|^2_{L^2} + 4\sum_{j,k=1}^{N-1} \rea \int \partial^2_{jk} \psi_R(y) \partial_j \overline{u}(t,x) \partial_k u(t,x) dx \\
	&\mathrel{\phantom{=}} - \int \Delta^2_y \psi_R(y) |u(t,x)|^2 dx + \frac{2\alpha}{\alpha+2} \int \left(2(N-1)-\Delta_y \psi_R(y) \right) |x|^{-b} |u(t,x)|^{\alpha+2} dx \\
	&\mathrel{\phantom{=}} +\frac{4b}{\alpha+2} \int \left( 2|x|^2 - \frac{\psi'_R(\tau)}{\tau} |y|^2 - 2x_N^2\right) |x|^{-b-2} |u(t,x)|^{\alpha+2} dx.
	\end{align*}
	Since $u$ is radially symmetric with respect to the first $N-1$ variables, we use the fact that
	\[
	\partial_j = \frac{y_j}{\tau} \partial_\tau, \quad \partial^2_{jk} = \left(\frac{\delta_{jk}}{\tau} - \frac{y_j y_k}{\tau^3}\right) \partial_\tau + \frac{y_j y_k}{\tau^2} \partial^2_\tau, \quad \tau=|y|, \quad j, k=1, \cdots, N-1 
	\]
	to have
	\[
	\sum_{j,k=1}^{N-1} \partial^2_{jk} \psi_R(y) \partial_j \overline{u}(t,x) \partial_k u(t,x) = \psi''_R(\tau) |\partial_\tau u(t,x)|^2 \leq 2 |\partial_\tau u(t,x)|^2 = 2 |\nabla_y u(t,x)|^2.
	\]
	Thus we get
	\[
	4\sum_{j,k=1}^{N-1} \rea \int \partial^2_{jk} \psi_R(y) \partial_j \overline{u}(t,x) \partial_k u(t,x) dx - 8 \|\nabla_y u(t)\|^2_{L^2} \leq 0.
	\]
	By the conservation of mass and the fact $\|\Delta_y \psi_R\|_{L^\infty} \lesssim R^{-2}$, we have
	\[
	\left| \int \Delta^2_y \psi_R(y) |u(t,x)|^2 dx \right| \lesssim R^{-2}.
	\]
	Moreover, since $\psi_R(y) =|y|^2$ for $|y| \leq R$ and $\|\Delta_y\psi_R\|_{L^\infty} \lesssim 1$, we see that
	\begin{align*}
	\left| \int \left(2(N-1)-\Delta_y \psi_R(y)\right) |x|^{-b} |u(t,x)|^{\alpha+2} dx\right| \lesssim \int_{|y|\geq R} |x|^{-b} |u(t,x)|^{\alpha+2} dx.
	\end{align*}
	Similarly, we have
	\begin{align*}
	\left|\int \left(2|x|^2-\frac{\psi_R'(\tau)}{\tau} |y|^2 - 2x_N^2 \right) |x|^{-b-2} |u(t,x)|^{\alpha+2} dx \right| \lesssim \int_{|y|\geq R} |x|^{-b} |u(t,x)|^{\alpha+2} dx.
	\end{align*} 
	We thus obtain
	\begin{align} \label{eq:est-cylin}
	V''_{\varphi_R}(t) \leq 8 G(u(t)) + CR^{-2} + CR^{-b}\int_{|y|\geq R} |u(t,x)|^{\alpha+2} dx.
	\end{align}
	To estimate the last term in the right hand side of \eqref{eq:est-cylin}, we recall the following radial Sobolev embedding due to Strauss \cite{Strauss}: for any radial function $f:\R^{N-1} \rightarrow \C$, it holds that
	\begin{align} \label{eq:Strauss}
	\sup_{y\ne 0} |y|^{\frac{N-2}{2}} |f(y)| \leq C(N) \|f\|^{\frac{1}{2}}_{L^2_y} \|\nabla_y f\|^{\frac{1}{2}}_{L^2_y}.
	\end{align}
	We estimate
	\begin{align*}
	\int_{\R} \int_{|y|\geq R} |u(t,y,x_N)|^{\alpha+2} dy dx_N \leq \int_{\R}  \|u(t,x_N)\|^\alpha_{L^\infty_y(|y|\geq R)} \|u(t,x_N)\|^2_{L^2_y} dx_N.
	\end{align*}
	We consider separately two subcases: $\alpha=2$ and $\alpha<2$.
	
	\noindent {\bf Subcase 1. $\alpha=2$.} We have
	\begin{align*}
	\int_{\R} \int_{|y|\geq R} |u(t,y,x_N)|^{\alpha+2} dy dx_N \leq \left(\sup_{x_N\in \R} \|u(t,x_N)\|^2_{L^2_y}\right) \int_{\R} \|u(t,x_N)\|^2_{L^\infty_y(|y|\geq R)} dx_N.
	\end{align*}
	By the radial Sobolev embedding \eqref{eq:Strauss} and the conservation of mass, we have
	\begin{align*}
	\int_{\R} \|u(t,x_N)\|^2_{L^\infty_y(|y|\geq R)} dx_N &\lesssim R^{-\frac{N-2}{2}} \int_{\R} \|u(t,x_N)\|_{L^2_y} \|\nabla_y u(t,x_N)\|_{L^2_y} dx_N \\
	& \lesssim R^{-\frac{N-2}{2}} \left(\int_{\R} \|u(t,x_N)\|^2_{L^2_y} dx_N \right)^{1/2} \left(\int_{\R} \|\nabla_y u(t,x_N)\|^2_{L^2_y} dx_N\right)^{1/2} \\
	&= R^{-\frac{N-2}{2}} \|u(t)\|_{L^2_x} \|\nabla_y u(t)\|_{L^2_x} \\
	&\lesssim R^{-\frac{N-2}{2}} \|\nabla_y u(t)\|_{L^2_x}.
	\end{align*}
	Set $g(x_N) := \|u(t,x_N)\|^2_{L^2_y}$. We have
	\begin{align*}
	g(x_N) = \int_{-\infty}^{x_N} \partial_s g(s) ds =2 \int_{-\infty}^{x_N} \rea \int_{\R^{N-1}} \overline{u}(t,y,s) \partial_s u(t,y,s) dy ds \leq 2 \|u(t)\|_{L^2_x} \|\partial_N u(t)\|_{L^2_x}.
	\end{align*}
	Thus we get
	\begin{align*}
	\sup_{x_N \in \R} \|u(t,x_N)\|^2_{L^2_y} \leq C \|\partial_N u(t)\|_{L^2_x}.
	\end{align*}
	This shows that
	\[
	\int_{\R} \int_{|y|\geq R} |u(t,y,x_N)|^{\alpha+2} dy dx_N \lesssim R^{-\frac{N-2}{2}} \|\nabla_y u(t)\|_{L^2_x} \|\partial_N u(t)\|_{L^2_x}\lesssim R^{-\frac{N-2}{2}} \|\nabla u(t)\|^2_{L^2_x}.
	\]
	
	\noindent {\bf Subcase 2. $\alpha<2$.} We have
	\begin{align*}
	\int_{\R} \int_{|y|\geq R} |u(t,y,x_N)|^{\alpha+2} dy dx_N \leq \left(\int_{\R} \|u(t,x_N)\|^2_{L^\infty_y(|y|\geq R)} dx_N\right)^{\frac{\alpha}{2}} \left(\int_{\R} \|u(t,x_N)\|^{\frac{4}{2-\alpha}}_{L^2_y}  dx_N\right)^{\frac{2-\alpha}{2}}. 
	\end{align*}
	By the Gagliardo-Nirenberg inequality, we have
	\begin{align*}
	\int_{\R} \|u(t,x_N)\|_{L^2_y}^{\frac{4}{2-\alpha}} dx_N\lesssim \left\|\partial_N \left(\|u(t,x_N)\|_{L^2_y}\right)\right\|_{L^2_{x_N}}^{\frac{\alpha}{2-\alpha}} \|\|u(t,x_N)\|_{L^2_y} \|_{L^2_{x_N}}^{\frac{4-\alpha}{2-\alpha}}.
	\end{align*}
	By the Cauchy-Schwarz inequality, we see that 
	\begin{align*}
	2\left|\partial_N \left(\|u(t,x_N)\|_{L^2_y}\right)\right| \|u(t,x_N)\|_{L^2_y} &= |\partial_N \left(\|u(t,x_N)\|^2_{L^2_y}\right)| \\
	&= 2\left|\rea \int_{\R^{N-1}} \overline{u}(t,y,x_N) \partial_N u(t,y,x_N) dy\right| \\
	&\leq 2 \|u(t,x_N)\|_{L^2_y} \|\partial_N u(t,x_N)\|_{L^2_y}
	\end{align*}
	which implies that $\left|\partial_N\left(\|u(t,x_N)\|_{L^2_y}\right)\right| \leq \|\partial_N u(t,x_N)\|_{L^2_y}$. It follows that
	\begin{align*}
	\int_{\R} \|u(t,x_N)\|_{L^2_y}^{\frac{4}{2-\alpha}} dx_N &\lesssim \| \|\partial_N u(t,x_N)\|_{L^2_y} \|_{L^2_{x_N}}^{\frac{\alpha}{2-\alpha}} \|u(t)\|_{L^2_x}^{\frac{4-\alpha}{2-\alpha}} \\
	& = \|\partial_N u(t)\|^{\frac{\alpha}{2-\alpha}}_{L^2_x} \|u(t)\|_{L^2_x}^{\frac{4-\alpha}{2-\alpha}} \\
	&\lesssim \|\partial_N u(t)\|^{\frac{\alpha}{2-\alpha}}_{L^2_x}.
	\end{align*}
	Thus, by the Young inequality, we get
	\begin{align*}
	\int_{\R} \int_{|y|\geq R} |u(t,y,x_N)|^{\alpha+2} dy dx_N &\lesssim R^{-\frac{(N-2)\alpha}{4}}  \|\nabla_y u(t)\|^{\frac{\alpha}{2}}_{L^2_x} \|\partial_N u(t)\|_{L^2_x}^{\frac{\alpha}{2}} \\
	&\lesssim R^{-\frac{(N-2)\alpha}{4}} \left(\|\nabla_y u(t)\|_{L^2_x} \|\partial_N u(t)\|_{L^2_x} + 1\right) \\
	&\lesssim R^{-\frac{(N-2)\alpha}{4}} \|\nabla u(t)\|^2_{L^2_x} + C R^{-\frac{(N-2)\alpha}{4}}.
	\end{align*}
	Collecting the above subcases and using \eqref{eq:est-cylin}, we obtain
	\begin{align} \label{eq:est-V-psi-R}
	V''_{\varphi_R}(t) \leq 8 G(u(t)) + CR^{-2} + \left\{ 
	\begin{array}{lcl}
	CR^{-\frac{N-2}{2}-b} \|\nabla u(t)\|^2_{L^2} &\text{if}& \alpha=2, \\
	CR^{-\frac{(N-2)\alpha}{4}-b} \|\nabla u(t)\|^2_{L^2} + CR^{-\frac{(N-2)\alpha}{4}-b} &\text{if}& \alpha<2,
	\end{array}
	\right.
	\end{align}
	for all $t\in (-T_*,T^*)$. Under the assumptions \eqref{eq:ener-below} and \eqref{eq:grad-blow-below}, we have the following estimate due to \cite[(5.8)]{Dinh-NA}: for $\vareps>0$ small enough, there exists a constant $\delta=\delta(\vareps)>0$ such that
	\begin{align} \label{eq:est-vareps}
	8G(u(t)) + \vareps \|\nabla u(t)\|^2_{L^2} \leq -\delta
	\end{align}
	for all $t\in (-T_*,T^*)$. Thanks to \eqref{eq:est-V-psi-R}, we take $R>1$ sufficiently large to get
	\[
	V''_{\psi_R}(t) \leq -\frac{\delta}{2} <0
	\]
	for all $t\in (-T_*,T^*)$. The standard convexity argument yields $T_*,T^*<\infty$. The proof is complete.	
	\hfill $\Box$
	
	We are next interested in long time dynamics of $H^1$-solutions for \eqref{eq:inls} with data at the ground state threshold. To this end, we need the following lemmas.

	\begin{lemma} \label{lem-weak}
		Let $N\geq 1$, $0<b<\min \{2,N\}$, and $0<\alpha <\alpha(N)$. Let $(f_n)_{n\geq 1}$ be a bounded sequence in $H^1$. Then, there exist a subsequence still denoted by $(f_n)_{n\geq 1}$ and a function $f \in H^1$ such that:
		\begin{itemize}[leftmargin=5mm]
			\item $f_n \rightarrow f$ weakly in $H^1$.
			\item $f_n \rightarrow f$ strongly in $L^r_{\loc}$ for all $1\leq r <2^*$.		
			\item $\lim_{n\rightarrow \infty} P(f_n) = P(f)$ as $n\rightarrow \infty$, where $P$ is as in \eqref{eq:P}.
		\end{itemize}
	\end{lemma}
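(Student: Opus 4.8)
The plan is to treat the three assertions in increasing order of difficulty, the first two being soft functional analysis and the third being the only substantive point.

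For the first assertion, since $H^1$ is a Hilbert space and $(f_n)_{n\geq 1}$ is bounded, the Banach--Alaoglu theorem (equivalently, reflexivity) yields a subsequence and a limit $f\in H^1$ with $f_n\rightharpoonup f$ weakly in $H^1$. For the second, I would restrict to a ball $B_R=\{|x|\le R\}$: weak convergence in $H^1(\R^N)$ forces weak convergence in $H^1(B_R)$, and the Rellich--Kondrachov theorem gives the compact embedding $H^1(B_R)\hookrightarrow\hookrightarrow L^r(B_R)$ for every $1\le r<2^*$, whence $f_n\to f$ strongly in $L^r(B_R)$. A diagonal extraction over $R=1,2,\dots$ produces a single subsequence converging strongly in $L^r_{\loc}$ for all such $r$; since $\alpha+2<2^*$ (a consequence of $\alpha<\alpha(N)$), in particular $f_n\to f$ in $L^{\alpha+2}_{\loc}$.

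The heart of the matter is $P(f_n)\to P(f)$, where the two difficulties are the singular weight at the origin and the loss of local control at infinity. I would fix $\eta>0$ and split $\R^N=B_\epsilon\cup A_{\epsilon,R}\cup B_R^c$, where $A_{\epsilon,R}=\{\epsilon\le|x|\le R\}$, with $\epsilon$ small and $R$ large to be chosen. On the annulus the weight is bounded, $|x|^{-b}\le\epsilon^{-b}$, and $f_n\to f$ strongly in $L^{\alpha+2}(A_{\epsilon,R})$ by the second assertion; continuity of $g\mapsto|g|^{\alpha+2}$ from $L^{\alpha+2}$ to $L^1$ then gives $\int_{A_{\epsilon,R}}|x|^{-b}|f_n|^{\alpha+2}\to\int_{A_{\epsilon,R}}|x|^{-b}|f|^{\alpha+2}$ as $n\to\infty$, for each fixed $\epsilon,R$.

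It remains to bound the two tails uniformly in $n$. On $B_R^c$ the weight is small: $\int_{B_R^c}|x|^{-b}|f_n|^{\alpha+2}\le R^{-b}\|f_n\|_{L^{\alpha+2}}^{\alpha+2}\lesssim R^{-b}\|f_n\|_{H^1}^{\alpha+2}\lesssim R^{-b}$, using $2<\alpha+2<2^*$ and the uniform $H^1$ bound. On $B_\epsilon$ I would apply H\"older's inequality with an exponent $p$ chosen so that $|x|^{-b}\in L^p(B_\epsilon)$, i.e.\ $p<N/b$, giving $\int_{B_\epsilon}|x|^{-b}|f_n|^{\alpha+2}\le\||x|^{-b}\|_{L^p(B_\epsilon)}\|f_n\|_{L^{(\alpha+2)p'}}^{\alpha+2}\lesssim\epsilon^{(N-bp)/p}\|f_n\|_{L^{(\alpha+2)p'}}^{\alpha+2}$, with $N-bp>0$; the same bounds hold for $f$. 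The crucial point, and the main obstacle, is that one must simultaneously arrange $p<N/b$ and $(\alpha+2)p'\le 2^*$, so that the last factor is controlled by the uniform $H^1$ bound through Sobolev embedding: letting $p\nearrow N/b$ gives $p'\searrow N/(N-b)$ and $(\alpha+2)p'\to(\alpha+2)\tfrac{N}{N-b}$, which is $<2^*$ \emph{exactly} when $\alpha<\alpha(N)$ (this is where the hypothesis on $\alpha$ enters). Thus an admissible $p$ exists, both tails are $O(R^{-b})+O(\epsilon^{(N-bp)/p})$ uniformly in $n$, and choosing $\epsilon$ small and $R$ large makes each $<\eta/4$; combined with the annulus limit this yields $\limsup_n|P(f_n)-P(f)|\le\eta$, and letting $\eta\to0$ completes the proof. (For $N=1,2$, where $2^*=\infty$, any $p\in(1,N/b)$ works, since all finite Sobolev exponents are admissible.)
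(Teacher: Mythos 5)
Your proof is correct, and the key exponent arithmetic checks out: the small-ball H\"older step needs $p<N/b$ together with $(\alpha+2)p'\le 2^*$, and since $(\alpha+2)\tfrac{N}{N-b}<2^*$ is precisely the condition $\alpha<\alpha(N)$, an admissible $p$ exists; the exterior tail and the annulus limit are handled correctly as well. The paper reaches the same conclusion with a two-region rather than a three-region decomposition: it splits only into $\{|x|\ge R\}$ (treated exactly as you treat it) and the full ball $\{|x|\le R\}$, where it applies H\"older with $\||x|^{-b}\|_{L^{\delta}(|x|\le R)}<\infty$ for some $\delta<N/b$, pairs the weight against $\||f_n|^{\alpha+2}-|f|^{\alpha+2}\|_{L^{\mu}(|x|\le R)}$, and then uses the elementary estimate $\||f_n|^{\alpha+2}-|f|^{\alpha+2}\|_{L^{\mu}}\lesssim \bigl(\|f_n\|^{\alpha+1}_{L^{\sigma}}+\|f\|^{\alpha+1}_{L^{\sigma}}\bigr)\|f_n-f\|_{L^{\sigma}(|x|\le R)}$ with $\sigma=(\alpha+2)\mu$; the strong local convergence in $L^{\sigma}$ for a single exponent $\sigma<2^*$ (whose existence is again exactly the inequality $(\alpha+2)\tfrac{N}{N-b}<2^*$, i.e.\ the same use of $\alpha<\alpha(N)$) then kills the entire interior difference in one stroke. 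In other words, the paper never isolates a small ball around the origin: the vanishing factor $\|f_n-f\|_{L^{\sigma}(|x|\le R)}$ neutralizes the singular weight inside the H\"older pairing. Your variant buys some transparency --- strong convergence is invoked only at the natural exponent $\alpha+2$ and only where the weight is bounded, and the role of $\alpha<\alpha(N)$ is isolated in a uniform-in-$n$ tail bound near the origin --- at the cost of one extra region and an $\epsilon$-limiting argument; the paper's version is shorter because local compactness is exploited at a higher exponent $\sigma$ chosen to absorb the weight.
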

	
	\begin{proof}
		The first two items are well-known. Let us prove the last one. Let $\vareps>0$. Since $(f_n)_{n\geq 1}$ is bounded in $H^1$, we have for any $R>0$,
		\begin{align*}
		\left|\int_{|x| \geq R} |x|^{-b} \left(|f_n(x)|^{\alpha+2} - |f(x)|^{\alpha+2}\right) dx \right| & \leq R^{-b} \left(\|f_n\|^{\alpha+2}_{L^{\alpha+2}} + \|f\|^{\alpha+2}_{L^{\alpha+2}}\right) \\
		&\leq CR^{-b} \left(\|f_n\|_{H^1}^{\alpha+2} + \|f\|^{\alpha+2}_{H^1}\right) \\
		&\leq C R^{-b}.
		\end{align*}
		By choosing $R>0$ sufficiently large, we have
		\begin{align}\label{eq:est-out}
		\left|\int_{|x| \geq R} |x|^{-b} \left(|f_n(x)|^{\alpha+2} - |f(x)|^{\alpha+2}\right) dx \right| <\frac{\vareps}{2}.
		\end{align}
		On the other hand, we have
		\[
		\left|\int_{|x| \leq R} |x|^{-b} \left(|f_n(x)|^{\alpha+2} - |f(x)|^{\alpha+2}\right) dx \right| \leq \||x|^{-b}\|_{L^\delta(|x|\leq R)} \||f_n|^{\alpha+2} - |f|^{\alpha+2}\|_{L^\mu(|x| \leq R)}
		\]
		provided that $\delta, \mu \geq 1, 1=\frac{1}{\delta} + \frac{1}{\mu}$. The term $\||x|^{-b}\|_{L^\delta(|x|\leq R)}$ is finite provided that $\frac{N}{\delta}>b$. Thus $\frac{1}{\delta} >\frac{b}{N}$ and $\frac{1}{\mu}=1-\frac{1}{\delta} <\frac{N-b}{N}$. We next bound
		\[
		\||f_n|^{\alpha+2} - |f|^{\alpha+2}\|_{L^\mu(|x| \leq R)} \lesssim \left(\|f_n\|^{\alpha+1}_{L^\sigma} + \|f\|^{\alpha+1}_{L^\sigma}\right) \|f_n-f\|_{L^\sigma(|x|\leq R)}
		\]
		provided that
		\begin{align} \label{eq:est-ball}
		\frac{\alpha+2}{\sigma} = \frac{1}{\mu} < \frac{N-b}{N}. 
		\end{align}
		By the Sobolev embedding $H^1 \hookrightarrow L^r$ for any $2\leq r<2^*$ and the fact that $f_n \rightarrow f$ strongly in $L^r(|x| \leq R)$ for any $1\leq r <2^*$, we are able to choose $\sigma \in (2,2^*)$ so that \eqref{eq:est-ball} holds. Indeed, in the case $N\geq 3$, we choose $\sigma$ smaller but close to $\frac{2N}{N-2}$. We see that \eqref{eq:est-ball} is satisfied provided that
		\[
		\frac{(\alpha+2)(N-2)}{2N} < \frac{N-b}{N}.
		\]
		This condition is fulfilled since $\alpha<\frac{4-2b}{N-2}$. In the case $N=1,2$, we see that \eqref{eq:est-ball} is satisfied by choosing $\sigma$ sufficiently large. As a consequence, we get
		\begin{align} \label{eq:est-in}
		\left|\int_{|x| \leq R} |x|^{-b} \left(|f_n(x)|^{\alpha+2} - |f(x)|^{\alpha+2}\right) dx \right| \leq C \|f_n-f\|_{L^\sigma(|x|\leq R)} <\frac{\vareps}{2}
		\end{align}
		for $n$ sufficiently large. Collecting \eqref{eq:est-out} and \eqref{eq:est-in}, we prove the result.
	\end{proof}
	
	\begin{lemma} \label{lem-comp-mini-GN}
		Let $N\geq 1$, $0<b<\min \{2,N\}$, and $0<\alpha<\alpha(N)$. Let $Q$ be the unique positive radial solution to \eqref{eq:ground}. Let $(f_n)_{n\geq 1}$ be a sequence of $H^1$-functions satisfying
		\[
		M(f_n) = M(Q), \quad E(f_n) = E(Q), \quad \forall n\geq 1
		\]
		and 
		\[
		\lim_{n\rightarrow \infty} \|\nabla f_n\|_{L^2} = \|\nabla Q\|_{L^2}.
		\]
		Then there exists a subsequence still denoted by $(f_n)_{n\geq 1}$ such that
		\[
		f_n \rightarrow e^{i\theta} Q \quad \text{strongly in } H^1
		\]
		for some $\theta \in \R$ as $n\rightarrow \infty$. 
	\end{lemma}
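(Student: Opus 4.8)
The plan is to extract a weak limit from $(f_n)_{n\ge 1}$ and then use the sharp Gagliardo--Nirenberg inequality \eqref{eq:GN-ineq} together with the optimality of $Q$ to promote weak convergence to strong convergence. First I would record that $(f_n)_{n\ge 1}$ is bounded in $H^1$: the mass is fixed and $\|\nabla f_n\|_{L^2}\to\|\nabla Q\|_{L^2}$. Since $E(f_n)=E(Q)$ while $\tfrac12\|\nabla f_n\|_{L^2}^2\to\tfrac12\|\nabla Q\|_{L^2}^2$, the definition of the energy forces $P(f_n)\to P(Q)$. Applying Lemma \ref{lem-weak} to $(f_n)_{n\ge 1}$, I obtain, along a subsequence, a weak limit $f\in H^1$ with $f_n\rightharpoonup f$ in $H^1$ and, crucially, $P(f_n)\to P(f)$. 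Hence $P(f)=P(Q)>0$, so $f\neq 0$. It is precisely this weak continuity of the potential term $P$---a consequence of the decay of the weight $|x|^{-b}$ at infinity, which suppresses any escape of mass to spatial infinity---that makes the inhomogeneous problem more rigid than the classical NLS and spares us a full concentration/compactness analysis.

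By weak lower semicontinuity of the $L^2$ norms of $f$ and $\nabla f$ and the normalizations, I have $\|\nabla f\|_{L^2}\le\|\nabla Q\|_{L^2}$ and $\|f\|_{L^2}\le\|Q\|_{L^2}$. Feeding $f$ into \eqref{eq:GN-ineq} and using that $Q$ realizes the optimal constant $C_{\opt}$ yields the squeeze
\[
P(Q)=P(f)\le C_{\opt}\|\nabla f\|_{L^2}^{\frac{N\alpha+2b}{2}}\|f\|_{L^2}^{\frac{4-2b-(N-2)\alpha}{2}}\le C_{\opt}\|\nabla Q\|_{L^2}^{\frac{N\alpha+2b}{2}}\|Q\|_{L^2}^{\frac{4-2b-(N-2)\alpha}{2}}=P(Q).
\]
Since both exponents in \eqref{eq:GN-ineq} are strictly positive (the second because $\alpha<\alpha(N)$) and $f\neq 0$, every inequality must be an equality; in particular $\|\nabla f\|_{L^2}=\|\nabla Q\|_{L^2}$, $\|f\|_{L^2}=\|Q\|_{L^2}$, and $f$ is itself an optimizer of \eqref{eq:GN-ineq}.

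The convergence is now immediate: in the Hilbert space $L^2$, the weak convergence $f_n\rightharpoonup f$ together with $\|f_n\|_{L^2}=\|Q\|_{L^2}=\|f\|_{L^2}$ forces $f_n\to f$ strongly in $L^2$, and likewise $\nabla f_n\rightharpoonup\nabla f$ with $\|\nabla f_n\|_{L^2}\to\|\nabla Q\|_{L^2}=\|\nabla f\|_{L^2}$ forces $\nabla f_n\to\nabla f$ in $L^2$; hence $f_n\to f$ in $H^1$. It remains to identify $f=e^{i\theta}Q$. Since $f$ optimizes \eqref{eq:GN-ineq}, it must coincide with $e^{i\theta}Q$ up to the amplitude and dilation symmetries under which the Weinstein quotient is invariant; writing $f=\beta\,e^{i\theta}Q(\gamma\,\cdot)$, the constraints $\|f\|_{L^2}=\|Q\|_{L^2}$ and $\|\nabla f\|_{L^2}=\|\nabla Q\|_{L^2}$ become $\beta^2\gamma^{-N}=1$ and $\beta^2\gamma^{2-N}=1$, which force $\beta=\gamma=1$, so $f=e^{i\theta}Q$.

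I expect the identification step to be the only delicate point, as it rests on the variational characterization of the minimizers of \eqref{eq:GN-ineq}. The argument I would give is standard but must be adapted to the weight. One first reduces to $f\ge 0$ by observing that $\|\nabla|f|\|_{L^2}\le\|\nabla f\|_{L^2}$ while $P$ and the $L^2$ norm are unchanged, so equality in \eqref{eq:GN-ineq} forces $f=e^{i\theta}|f|$ for a constant phase $\theta$. One then invokes Schwarz symmetrization, noting that because $|x|^{-b}$ is itself radially symmetric and decreasing, the symmetric decreasing rearrangement does not decrease $P$ while it does not increase $\|\nabla\cdot\|_{L^2}$, whence the optimizer is radially symmetric decreasing. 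Finally, the Euler--Lagrange equation for the suitably rescaled optimizer is exactly \eqref{eq:ground}, and the cited uniqueness of its positive radial solution identifies $|f|$ with $Q$, completing the proof.
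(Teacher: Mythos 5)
Your proof is correct and follows essentially the same route as the paper's: extract a weak limit via Lemma \ref{lem-weak}, use the convergence of $P$ together with weak lower semicontinuity of the $L^2$ and $\dot H^1$ norms to force equality in \eqref{eq:GN-ineq}, upgrade weak to strong $H^1$ convergence via norm convergence, and identify the limit through the constant-phase argument, Schwarz symmetrization, the Euler--Lagrange equation, and the uniqueness of the positive radial solution of \eqref{eq:ground}, with the norm constraints pinning down the scaling parameters. The only slightly glossed point---passing from ``the rearrangement $g^*$ is also an optimizer'' to ``the optimizer itself is radial''---is shared with the paper and is easily repaired here because the weight $|x|^{-b}$ is strictly radially decreasing, so equality of the potential terms forces $g=g^*$.
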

	
	\begin{proof}
		Since $(f_n)$ is a bounded sequence in $H^1$, by Lemma \ref{lem-weak}, there exist a subsequence still denoted by $(f_n)_{n\geq 1}$ and a function $f\in H^1$ such that $f_n \rightarrow f$ weakly in $H^1$ and $P(f_n) \rightarrow P(f)$ as $n\rightarrow \infty$. We first observe that
		\begin{align*}
		P(f)=\lim_{n\rightarrow \infty} P(f_n) &= \lim_{n\rightarrow \infty} (\alpha+2) \left( \frac{1}{2} \|\nabla f_n\|^2_{L^2} - E(f_n)\right) \\
		&= (\alpha+2) \left(\frac{1}{2} \|\nabla Q\|^2_{L^2} - E(Q)\right) \\
		&=\frac{2(\alpha+2)}{N\alpha+2b} \|\nabla Q\|^2_{L^2} = P(Q).
		\end{align*}
		This shows that $f\ne 0$. Moreover, by the Gagliardo-Nirenberg inequality \eqref{eq:GN-ineq}, we have
		\[
		P(f)-C_{\opt} \|\nabla f\|^{\frac{N\alpha+2b}{2}}_{L^2} \|f\|^{\frac{4-2b-(N-2)\alpha}{2}}_{L^2} \leq 0.
		\]
		By the lower continuity of weak convergence, we have
		\[
		\|\nabla f\|_{L^2} \leq \liminf_{n\rightarrow \infty} \|\nabla f_n\|^2_{L^2}
		\]
		which implies that
		\begin{align*}
		P(f) - C_{\opt} \|\nabla f\|^{\frac{N\alpha+2b}{2}}_{L^2} \|f\|^{\frac{4-2b-(N-2)\alpha}{2}}_{L^2} &\geq \liminf_{n\rightarrow \infty} P(f_n) - C_{\opt} \|\nabla f_n\|^{\frac{N\alpha+2b}{2}}_{L^2} \|f_n\|^{\frac{4-2b-(N-2)\alpha}{2}}_{L^2} \\
		&= P(Q) - C_{\opt} \|\nabla Q\|^{\frac{N\alpha+2b}{2}}_{L^2} \|Q\|_{L^2}^{\frac{4-2b-(N-2)\alpha}{2}} =0.
		\end{align*}
		This shows that $f$ is an optimizer for the Gagliardo-Nirenberg inequality \eqref{eq:GN-ineq}. We also have
		\[
		\|\nabla f\|_{L^2} = \lim_{n\rightarrow \infty} \|\nabla f_n\|^2_{L^2},
		\]
		hence $f_n\rightarrow f$ strongly in $H^1$. We claim that there exists $\theta \in \R$ such that $f(x) =e^{i\theta} g(x)$, where $g$ is a non-negative radial optimizer for \eqref{eq:GN-ineq}. Indeed, since $\|\nabla (|f|)\|_{L^2} \leq \|\nabla f\|_{L^2}$, it is clear that $|f|$ is also an optimizer for \eqref{eq:GN-ineq} and
		\begin{align} \label{eq:prop-f}
		\|\nabla(|f|)\|_{L^2} = \|\nabla f\|_{L^2}.
		\end{align}
		Set $w(x):=\frac{f(x)}{|f(x)|}$. Since $|w(x)|^2=1$, it follows that $\rea(\overline{w} \nabla w(x)) =0$ and
		\[
		\nabla f(x) = \nabla(|f(x)|) w(x) + |f(x)| \nabla w(x) = w(x) (\nabla(|f(x)|) + |f(x)| \overline{w}(x) \nabla w(x))
		\]
		which implies $|\nabla f(x)|^2 = |\nabla (|f(x)|)|^2+ |f(x)|^2 |\nabla w(x)|^2$ for all $x\in \R^3$. From \eqref{eq:prop-f}, we get
		\[
		\int_{\R^3} |f(x)|^2 |\nabla w(x)|^2 dx =0
		\]
		which shows $|\nabla w(x)|=0$, hence $w(x)$ is a constant, and the claim follows with $g(x) = |f(x)|$. Moreover, by replacing $g$ with its symmetric rearrangement, we can assume that $g$ is radially symmetric. Since $g$ is an optimizer for \eqref{eq:GN-ineq}, $g$ must satisfy the Euler-Lagrange equation
		\[
		\left.\frac{d}{d\vareps}\right|_{\vareps=0} W(g+ \vareps \phi) =0,
		\]
		where $W$ is the Weinstein functional
		\[
		W(f):= P(f) \div \left[\|\nabla f\|^{\frac{N\alpha+2b}{2}}_{L^2} \|f\|^{\frac{4-2b-(N-2)\alpha}{2}}_{L^2} \right].
		\]
		A direct computation shows
		\begin{align*}
		-m \Delta g + n g -\frac{\alpha+2}{C_{\opt}} |x|^{-b} |g|^\alpha g =0,
		\end{align*}
		where
		\begin{align*}
		m:&=\frac{N\alpha+2b}{2} \|\nabla f\|^{\frac{N\alpha+2b-4}{2}}_{L^2} \|f\|^{\frac{4-2b-(N-2)\alpha}{2}}_{L^2}, \\
		n:&= \frac{4-2b-(N-2)\alpha}{2} \|\nabla f\|^{\frac{N\alpha+2b}{2}}_{L^2} \|f\|^{-\frac{2b+(N-2)\alpha}{2}}_{L^2}.
		\end{align*}
		By a change of variable $g(x) = \lambda \phi(\mu x)$ with $\lambda, \mu>0$ satisfying
		\[
		\mu^2=\frac{n}{m}, \quad \lambda^\alpha = \frac{nC_{\opt}}{\alpha+2} \mu^{-b},
		\] 
		we see that $\phi$ solves \eqref{eq:ground} and $W(g) = W(\phi) =C_{\opt}$. By the uniqueness of positive radial solution to \eqref{eq:ground} due to \cite{Genoud-2d, Yanagida, Toland}, we have $\phi \equiv Q$. As $\|g\|_{L^2}= \|Q\|_{L^2}$ and $\|\nabla g\|_{L^2}=\|\nabla Q\|_{L^2}$, we infer that $\lambda =\mu=1$. This shows that $f(x)=e^{i\theta} Q(x)$ for some $\theta \in \R$. The proof is complete.
	\end{proof}

	\noindent {\it Proof of Theorem \ref{theo-dyna-at}.}
	We consider separately three cases.
	
	\vspace{3mm}
	
	\noindent {\bf Case 1.} Let $u_0 \in H^1$ satisfy \eqref{eq:ener-at} and \eqref{eq:grad-at-1}. We first note that \eqref{eq:ener-at} and \eqref{eq:grad-at-1} are invariant under the scaling 
	\begin{align}\label{eq:scal-u0}
	u_0^\lambda(x):= \lambda^{\frac{2-b}{\alpha}} u_0(\lambda x), \quad \lambda>0.
	\end{align}
	By choosing a suitable scaling, we can assume that
	\begin{align} \label{eq:M-E-u0}
	M(u_0) = M(Q), \quad E(u_0) = E(Q).
	\end{align}
	Thus \eqref{eq:grad-at-1} becomes $\|\nabla u_0\|_{L^2}<\|\nabla Q\|_{L^2}$. We first claim that 
	\begin{align} \label{eq:claim-at-1}
	\|\nabla u(t)\|_{L^2}< \|\nabla Q\|_{L^2}
	\end{align}
	for all $t\in (-T_*,T^*)$. Assume by contradiction that there exists $t_0 \in (-T_*,T^*)$ such that $\|\nabla u(t_0)\|_{L^2} \geq \|\nabla Q\|_{L^2}$. By continuity, there exists $t_1 \in (-T_*,T^*)$ such that $\|\nabla u(t_1)\|_{L^2} = \|\nabla Q\|_{L^2}$. By the conservation of energy and \eqref{eq:ener-Q}, we see that
	\begin{align*}
	P(u(t_1)) &= (\alpha+2) \left( \frac{1}{2} \|\nabla u(t_1)\|^2_{L^2} - E(u(t_1))\right) \\
	&= (\alpha+2) \left( \frac{1}{2} \|\nabla Q\|^2_{L^2}- E(Q)\right) \\
	&= \frac{2(\alpha+2)}{N\alpha+2b} \|\nabla Q\|^2_{L^2}.
	\end{align*}
	This shows that $u(t_1)$ is an optimizer for the Gagliardo-Nirenberg inequality \eqref{eq:GN-ineq}. Arguing as in the proof of Lemma \ref{lem-comp-mini-GN}, we have $u(t_1) = e^{i\theta}Q$ for some $\theta \in \R$. Moreover, by the uniqueness of solution to \eqref{eq:inls}, we infer that $u(t) =e^{it} e^{i\theta} Q$ which contradicts \eqref{eq:grad-at-1}. This shows \eqref{eq:claim-at-1}. In particular, the solution exists globally in time. We now have two possibilities.
	
	\noindent {\bf First possibility.} If 
	\[
	\sup_{t\in \R} \|\nabla u(t)\|_{L^2} < \|\nabla Q\|_{L^2},
	\]
	then there exists $\rho>0$ such that 
	\[
	\|\nabla u(t)\|_{L^2} \leq (1-\rho)\|\nabla Q\|_{L^2}
	\]
	which, by \eqref{eq:M-E-u0}, implies that \eqref{eq:claim-below-1} holds for all $t\in \R$. By the same argument as in the proof of Theorem \ref{theo-dyna-below}, we prove \eqref{eq:est-solu-at}. In particular, if $N\geq 2$ and $0<b<\min \left\{2,\frac{N}{2}\right\}$, then by Theorem \ref{theo-scat-crite}, the solution scatters in $H^1$ in both directions.
	
	\noindent {\bf Second possibility.} If
	\[
	\sup_{t\in \R} \|\nabla u(t)\|_{L^2} = \|\nabla Q\|_{L^2},
	\]
	then there exists a time sequence $(t_n)_{n\geq 1} \subset \R$ such that
	\[
	M(u(t_n))= M(Q), \quad E(u(t_n)) = E(Q), \quad \lim_{n\rightarrow \infty} \|\nabla u(t_n)\|_{L^2} = \|\nabla Q\|_{L^2}.
	\]
	We notice that $|t_n| \rightarrow \infty$. Otherwise, passing to a subsequence if necessary, we have $t_n \rightarrow t_0$ as $n\rightarrow \infty$. By continuity of the solution, we have $u(t_n) \rightarrow u(t_0)$ strongly in $H^1$. This implies that $u(t_0)$ is an optimizer for \eqref{eq:GN-ineq} which is a contradiction.
	
	Applying Lemma \ref{lem-comp-mini-GN} with $f_n = u(t_n)$, we prove that up to a subsequence,
	\[
	u(t_n)\rightarrow e^{i\theta}Q \text{ strongly in } H^1
	\]
	for some $\theta \in \R$ as $n\rightarrow \infty$.
	
	\vspace{3mm}
	
	\noindent {\bf Case 2.} Let $u_0 \in H^1$ satisfy \eqref{eq:ener-at} and \eqref{eq:grad-at-2}. By the scaling \eqref{eq:scal-u0}, we can assume that
	\[
	M(u_0) = M(Q), \quad \|\nabla u_0\|_{L^2}= \|\nabla Q\|_{L^2}, \quad E(u_0) = E(Q).
	\]
	In particular, $u_0$ is an optimizer for $\eqref{eq:GN-ineq}$ which implies $u_0(x)=e^{i\theta} Q(x)$ for some $\theta \in \R$. By the uniqueness of solution to \eqref{eq:inls}, we have $u(t,x)=e^{it} e^{i\theta} Q(x)$.
	
	\vspace{3mm}
	
	\noindent {\bf Case 3.} Let $u_0 \in H^1$ satisfy \eqref{eq:ener-at} and \eqref{eq:grad-at-3}. As in Case 1, we can assume that
	\begin{align} \label{eq:M-E-blow}
	M(u_0) = M(Q), \quad E(u_0) = E(Q), \quad \|\nabla u_0\|_{L^2}>\|\nabla Q\|_{L^2}. 
	\end{align}
	Arguing as above, we prove that
	\[
	\|\nabla u(t)\|_{L^2} > \|\nabla Q\|_{L^2}
	\]
	for all $t\in (-T_*,T^*)$. Let us consider only positive times. The one for negative times is similar. If $T^*<\infty$, then we are done. Otherwise, if $T^*=\infty$, then we consider two possibilities.
	
	\noindent {\bf First possibility.} If 
	\[
	\sup_{t\in [0,\infty)} \|\nabla u(t)\|_{L^2} > \|\nabla Q\|_{L^2},
	\] 
	then there exists $\rho >0$ such that 
	\begin{align} \label{eq:est-firs-posi}
	\|\nabla u(t)\|_{L^2} \geq (1+\rho) \|\nabla Q\|_{L^2}
	\end{align}
	for all $t\in [0,\infty)$. By \eqref{eq:M-E-blow} and the conservation laws of mass and energy, we have
	\begin{align*}
	G(u(t)) [M(u(t))]^{\sigc} &= \frac{N\alpha+2b}{2}E(u(t)) [M(u(t))]^{\sigc} - \frac{N\alpha-4+2b}{4} \left(\|\nabla u(t)\|_{L^2} \|u(t)\|^{\sigc}_{L^2} \right)^2 \\
	&\leq \frac{N\alpha+2b}{2} E(Q) [M(Q)]^{\sigc} - \frac{N\alpha-4+2b}{4} \left((1+\rho) \|\nabla Q\|_{L^2} \|Q\|^{\sigc}_{L^2} \right)^2 \\
	&= -\frac{N\alpha-4+2b}{4} \left((1+\rho)^2-1\right) \left( \|\nabla Q\|_{L^2} \|Q\|^{\sigc}_{L^2}\right)^2
	\end{align*}
	for all $t\in [0,\infty)$. By Theorem \ref{theo-blow-crite}, there exists a time sequence $t_n\rightarrow \infty$ such that $\|\nabla u(t_n)\|_{L^2} \rightarrow \infty$ as $n\rightarrow \infty$.
	
	\noindent {\bf Second possibility.} If 
	\[
	\sup_{t\in[0,\infty)} \|\nabla u(t)\|_{L^2} = \|\nabla Q\|_{L^2},
	\]
	then there exists a time sequence $(t_n)_{n\geq 1}$ such that $\|\nabla u(t_n)\|_{L^2} \rightarrow \|\nabla Q\|_{L^2}$ as $n\rightarrow \infty$. Arguing as in Case 1, we show that $t_n \rightarrow \infty$ and 
	\[
	u(t_n) \rightarrow e^{i\theta} Q \text{ strongly in } H^1
	\]
	for some $\theta \in \R$ as $n\rightarrow \infty$. This completes the first part of Item (3) of Theorem \ref{theo-dyna-at}.
	
	Let us prove the second part of Item (3) of Theorem \ref{theo-dyna-at}. 
	
	\vspace{3mm}
	
	\noindent $\bullet$ {\bf Finite variance data.} If we assume in addition that $u_0 \in \Sigma$, then the first possibility cannot occur. In fact, if it occurs, then there exists $\delta>0$ such that
	\[
	G(u(t)) \leq -\delta
	\]
	for all $t\in [0,\infty)$. This is impossible by the convexity argument as
	\[
	\frac{d^2}{dt^2} \|xu(t)\|^2_{L^2} = 8 G(u(t)).
	\]
	
	\vspace{3mm}
	
	\noindent $\bullet$ {\bf Radially symmetric data.} If we assume in addition that $N\geq 2$, $\alpha \leq 4$, and $u_0$ is radially symmetric, then the first possibility cannot occur. In fact, suppose that the first possibility occurs, so \eqref{eq:est-firs-posi} holds. It follows from \eqref{eq:M-E-blow} and \eqref{eq:prop-Q} that
	\begin{align*}
	8 G(u(t)) &+ \vareps \|\nabla u(t)\|^2_{L^2} \\
	&= 4(N\alpha+2b) E(u(t)) [M(u(t))]^{\sigc} - (2N\alpha -4b+8 -\vareps) \|\nabla u(t)\|^2_{L^2} [M(u(t))]^{\sigc} \\
	&\leq  4(N\alpha+2b) E(Q) [M(Q)]^{\sigc} - (2N\alpha-4b+8-\vareps) (1+\rho)^2 \left(\|\nabla Q\|_{L^2} \|Q\|^{\sigc}_{L^2}\right)^2 \\
	&= -2(N\alpha-4+2b) \left(\|\nabla Q\|_{L^2} \|Q\|^{\sigc}_{L^2}\right)^2 (1+\rho)^2 \left[\frac{(1+\rho)^2-1}{(1+\rho)^2} - \frac{\vareps}{2(N\alpha-4+2b)}\right]
	\end{align*}
	for all $t\in [0,\infty)$. Taking $\vareps>0$ sufficiently small, there exists $\delta=\delta(\vareps)>0$ such that 
	\begin{align} \label{eq:est-G-vareps}
	8 G(u(t)) +\vareps \|\nabla u(t)\|^2_{L^2} \leq-\delta
	\end{align}
	for all $t\in [0,\infty)$. We recall the following estimate due to \cite[Lemma 3.4]{Dinh-NA}: for any $R>1$ and any $\vareps>0$,
	\[
	V''_{\varphi_R} (t) \leq 8 G(u(t)) + \left\{
	\renewcommand*{\arraystretch}{1.2}
	\begin{array}{lcl}
	CR^{-2} + CR^{-[2(N-1)+b]} \|\nabla u(t)\|^2_{L^2} &\text{if}& \alpha=4, \\
	CR^{-2} + C\vareps^{-\frac{\alpha}{4-\alpha}} R^{-\frac{2[(N-1)\alpha+2b]}{4-\alpha}} + \vareps \|\nabla u(t)\|^2_{L^2} &\text{if}& \alpha<4.
	\end{array}
	\right.
	\]
	Thanks to \eqref{eq:est-G-vareps}, we take $R>1$ sufficiently large if $\alpha=4$, and $\vareps>0$ sufficiently small and $R>1$ sufficiently large depending on $\vareps$, we obtain
	\[
	V''_{\varphi_R}(t) \leq -\frac{\delta}{2}
	\]	
	for all $t\in [0,\infty)$. This is impossible.
	
	\vspace{3mm}
	
	\noindent $\bullet$ {\bf Cylindrically symmetric data.} If we assume in addition that $N\geq 3$, $\alpha\leq 2$, and $u_0 \in \Sigma_N$, then the first possibility cannot occur. This is done by the same argument as above using \eqref{eq:est-V-psi-R} and \eqref{eq:est-G-vareps}. The proof of Theorem \ref{theo-dyna-at} is now complete. 
	\hfill $\Box$
	
	Finally, we study long time dynamics of $H^1$-solutions for \eqref{eq:inls} with data above the ground state threshold. 
	
	\noindent {\it Proof of Theorem \ref{theo-dyna-above}.} Let us consider two cases.
	
	\noindent {\bf Case 1.} Let $u_0 \in \Sigma$ satisfy \eqref{eq:ener-above-1}, \eqref{eq:ener-above-2}, \eqref{eq:gwp-above-1}, and \eqref{eq:gwp-above-2}. We will show that \eqref{scat-crite} holds. To this end, let us start with the following estimate: for $f \in \Sigma$, 
	\begin{align} \label{eq:est-fini-vari}
	\left( \ima \int \bar{f} x \cdot \nabla f dx\right)^2 \leq \|xf\|^2_{L^2} \left(\|\nabla f\|^2_{L^2} - [C_{\opt}]^{-\frac{4}{N\alpha+2b}} [M(f)]^{-\frac{4-2b-(N-2)\alpha}{N\alpha+2b}} [P(f)]^{\frac{4}{N\alpha+2b}} \right).
	\end{align}
	In fact, let $\lambda>0$. We have
	\begin{align*}
	\int |\nabla(e^{i\lambda|x|^2} f)|^2 dx = 4\lambda^2 \|xf\|^2_{L^2} + 4 \lambda \ima \int \bar{f} x\cdot \nabla f dx + \|\nabla f\|^2_{L^2}.
	\end{align*}
	By the Gagliardo-Nirenberg inequality \eqref{eq:GN-ineq}, we have
	\[
	[P(f)]^{\frac{4}{N\alpha+2b}} = [P(e^{i\lambda |x|^2} f)]^{\frac{4}{N\alpha+2b}} \leq [C_{\opt}]^{\frac{4}{N\alpha+2b}} \|\nabla(e^{i\lambda |x|^2} f)\|^2_{L^2} \|f\|^{\frac{2[4-2b-(N-2)\alpha]}{N\alpha+2b}}_{L^2}
	\]
	or
	\[
	\|\nabla(e^{i\lambda |x|^2}f)\|^2_{L^2} \geq [C_{\opt}]^{-\frac{4}{N\alpha+2b}} M(f)^{-\frac{4-2b-(N-2)\alpha}{N\alpha+2b}} [P(f)]^{\frac{4}{N\alpha+2b}}.
	\]
	It follows that
	\[
	4\lambda^2 \|xf\|^2_{L^2} + 4 \lambda \ima \int \bar{f} x\cdot \nabla f dx + \|\nabla f\|^2_{L^2} - [C_{\opt}]^{-\frac{4}{N\alpha+2b}} [M(f)]^{-\frac{4-2b-(N-2)\alpha}{N\alpha+2b}} [P(f)]^{\frac{4}{N\alpha+2b}} \geq 0
	\]
	for all $\lambda >0$. Since the left hand side is a quadratic polynomial in $\lambda$, its discriminant must be non-positive which proves \eqref{eq:est-fini-vari}.
	
	We also have
	\begin{align*}
	V''(t) &= 8 \|\nabla u(t)\|^2_{L^2} - \frac{4(N\alpha+2b)}{\alpha+2} P(u(t)) \\
	&= 16E(u(t)) - \frac{4(N\alpha-4+2b)}{\alpha+2} P(u(t)) \\
	&= 4(N\alpha+2b) E(u(t)) - 2(N\alpha-4+2b) \|\nabla u(t)\|^2_{L^2}
	\end{align*}
	which implies that
	\begin{align*}
	P(u(t)) &= \frac{\alpha+2}{4(N\alpha-4+2b)} \left( 16 E(u(t))-V''(t)\right), \\
	\|\nabla u(t)\|^2_{L^2} &= \frac{1}{2(N\alpha-4+2b)} \left(4(N\alpha+2b) E(u(t)) - V''(t)\right).
	\end{align*}
	Since $P(u(t)) \geq 0$, we have $V''(t) \leq 16 E(u(t))=16 E(u_0)$. Inserting the above identities to \eqref{eq:est-fini-vari}, we get
	\begin{multline*}
	(V'(t))^2 \leq 16 V(t) \Big[ \frac{1}{2(N\alpha-4+2b)} \left( 4(N\alpha+2b) E(u(t)) - V''(t)\right) \\
	-[C_{\opt}]^{-\frac{4}{N\alpha+2b}} [M(u(t))]^{-\frac{4-2b-(N-2)\alpha}{N\alpha+2b}} \Big(\frac{\alpha+2}{4(N\alpha-4+2b)} \left( 16 E(u(t))-V''(t)\right) \Big)^{\frac{4}{N\alpha+2b}} \Big]
	\end{multline*} 
	which implies
	\begin{align} \label{eq:g-V}
	(z'(t))^2 \leq 4 g (V''(t)),
	\end{align}
	where
	\[
	z(t):= \sqrt{V(t)}
	\]
	and 
	\begin{multline*}
	g(\lambda):= \frac{1}{2(N\alpha-4+2b)} \left( 4(N\alpha+2b) E - \lambda\right) \\
	-[C_{\opt}]^{-\frac{4}{N\alpha+2b}} M^{-\frac{4-2b-(N-2)\alpha}{N\alpha+2b}} \Big(\frac{\alpha+2}{4(N\alpha-4+2b)} \left( 16 E-\lambda\right) \Big)^{\frac{4}{N\alpha+2b}}
	\end{multline*}
	with $\lambda \leq 16E$. Here we have used the notation $E(u(t)) = E, M(u(t))=M$ due to the conservation of mass and energy. Since $N\alpha+2b>4$, we see that $g(\lambda)$ is decreasing on $(-\infty, \lambda_0)$ and increasing on $(\lambda_0, 16E)$, where $\lambda_0$ satisfies
	\begin{align} \label{eq:lambda0}
	\frac{N\alpha+2b}{2(\alpha+2)} = [C_{\opt}]^{-\frac{4}{N\alpha+2b}} M^{-\frac{4-2b-(N-2)\alpha}{N\alpha+2b}} \Big(\frac{\alpha+2}{4(N\alpha-4+2b)} (16E -\lambda_0) \Big)^{\frac{4-N\alpha-2b}{N\alpha+2b}}.
	\end{align}
	A direct calculation shows
	\begin{align*}
	g(\lambda_0) = \frac{1}{2(N\alpha-4+2b)}\left(4(N\alpha+2b)E-\lambda_0\right) - \frac{N\alpha+2b}{8(N\alpha-4+2b)} \left(16E -\lambda_0\right) =\frac{\lambda_0}{8}.
	\end{align*}
	Using the fact that
	\[
	C_{\opt} = \frac{2(\alpha+2)}{N\alpha+2b} \left(\frac{2(N\alpha+2b)}{N\alpha-4+2b} E(Q) [M(Q)]^{\sigc} \right)^{-\frac{N\alpha-4+2b}{4}},
	\]
	we infer from \eqref{eq:lambda0} that
	\[
	1=\frac{16 E(Q)[M(Q)]^{\sigc}}{(16E-\lambda_0) M^{\sigc}}
	\]
	or
	\begin{align} \label{eq:M-E-lambda0}
	\frac{EM^{\sigc}}{E(Q)[M(Q)]^{\sigc}} \left(1-\frac{\lambda_0}{16E}\right)=1.
	\end{align}
	Thus the assumption \eqref{eq:ener-above-1} is equivalent to 
	\begin{align} \label{eq:ener-above-1-equi}
	\lambda_0 \geq 0.
	\end{align}
	Moreover, the assumption \eqref{eq:ener-above-2} is equivalent to 
	\[
	(V'(0))^2 \geq 2V(0) \lambda_0
	\]
	or
	\begin{align} \label{eq:ener-above-2-equi}
	(z'(0))^2 \geq \frac{\lambda_0}{2} = 4 g(\lambda_0).
	\end{align}
	Similarly, the assumption \eqref{eq:gwp-above-2} is equivalent to
	\begin{align} \label{eq:gwp-above-2-equi}
	z'(0) \geq 0.
	\end{align}
	Finally, the assumption \eqref{eq:gwp-above-1} is equivalent to 
	\begin{align} \label{eq:gwp-above-1-equi}
	V''(0) >\lambda_0.
	\end{align} 
	Indeed, from \eqref{eq:gwp-above-1}, we have
	\begin{align*}
	V''(0) &= 16 E - \frac{4(N\alpha-4+2b)}{\alpha+2} P(u_0) \\
	&>16E - \frac{4(N\alpha-4+2b)}{\alpha+2} \frac{P(Q) [M(Q)]^{\sigc}}{M^{\sigc}} \\
	&=16 \left(E - \frac{E(Q)[M(Q)]^{\sigc}}{M^{\sigc}}\right) \\
	&=16E \left(1-\frac{E(Q)[M(Q)]^{\sigc}}{EM^{\sigc}}\right) \\
	&= \lambda_0,
	\end{align*}
	where we have used \eqref{eq:M-E-lambda0} to get the last equality. 
	
	Next, we claim that there exists $\delta_0>0$ small such that for all $t\in [0,T^*)$,
	\begin{align} \label{eq:claim-above-1}
	V''(t) \geq \lambda_0 + \delta_0.
	\end{align}
	Assume \eqref{eq:claim-above-1} for the moment, we prove \eqref{scat-crite}. We have
	\begin{align*}
	P(u(t)) [M(u(t))]^{\sigc} &= \frac{\alpha+2}{4(N\alpha-4+2b)} \left(16E - V''(t)\right) M^{\sigc} \\
	&\leq \frac{\alpha+2}{4(N\alpha-4+2b)} (16E -\lambda_0 -\delta_0) M^{\sigc} \\
	&= \frac{4(\alpha+2)}{N\alpha-4+2b} E(Q) [M(Q)]^{\sigc} - \frac{\alpha+2}{4(N\alpha-4+2b)}\delta_0 M^{\sigc} \\
	& = (1-\rho) P(Q) [M(Q)]^{\sigc} 
	\end{align*}
	for all $t\in [0,T^*)$, where $\rho:= \frac{\alpha+2}{4(N\alpha-4+2b)} \delta_0 \frac{M^{\sigc}}{P(Q)[M(Q)]^{\sigc}}>0$. Here we have used \eqref{eq:M-E-lambda0} to get the third line. This shows \eqref{scat-crite}. In particular, if $N\geq 2$ and $0<b<\min \left\{2,\frac{N}{2}\right\}$, then the solution scatters in $H^1$ forward in time.
	
	It remains to show \eqref{eq:claim-above-1}. By \eqref{eq:gwp-above-1-equi}, we take $\delta_1>0$ so that 
	\[
	V''(0) \geq \lambda_0 + 2\delta_1.
	\]
	By continuity, we have
	\begin{align} \label{eq:claim-above-1-proof-1}
	V''(t) >\lambda_0 + \delta_1, \quad \forall t\in [0,t_0).
	\end{align}
	for $t_0>0$ sufficiently small. By reducing $t_0$ if necessary, we can assume that
	\begin{align} \label{eq:claim-above-1-proof-2}
	z'(t_0) >2\sqrt{g(\lambda_0)}.
	\end{align}
	In fact, if $z'(0) >2\sqrt{g(\lambda_0)}$, then \eqref{eq:claim-above-1-proof-2} follows from the continuity argument. Otherwise, if $z'(0)=2\sqrt{g(\lambda_0)}$, then using the fact that
	\begin{align} \label{eq:claim-above-1-proof-3}
	z''(t) = \frac{1}{z(t)} \left(\frac{V''(t)}{2} - (z'(t))^2\right)
	\end{align}
	and \eqref{eq:gwp-above-1-equi}, we have $z''(0)>0$. This shows \eqref{eq:claim-above-1-proof-2} by taking $t_0>0$ sufficiently small. Thanks to \eqref{eq:claim-above-1-proof-2}, we take $\epsilon_0>0$ be a small constant so that
	\begin{align} \label{eq:claim-above-1-proof-4}
	z'(t_0) \geq 2\sqrt{g(\lambda_0)} + 2\epsilon_0.
	\end{align}
	We will prove by contradiction that 
	\begin{align} \label{eq:claim-above-1-proof-5}
	z'(t) > 2\sqrt{g(\lambda_0)} + \epsilon_0, \quad \forall t\geq t_0.
	\end{align}
	Suppose that it is not true and set
	\[
	t_1:= \inf \left\{ t\geq t_0 \ : \ z'(t) \leq 2\sqrt{g(\lambda_0)} + \epsilon_0 \right\}.
	\]
	By \eqref{eq:claim-above-1-proof-4}, we have $t_1 >t_0$. By continuity, we have
	\begin{align} \label{eq:claim-above-1-proof-6}
	z'(t_1) = 2\sqrt{g(\lambda_0)} + \epsilon_0
	\end{align}
	and
	\begin{align} \label{eq:claim-above-1-proof-7}
	z'(t) \geq 2\sqrt{g(\lambda_0)} + \epsilon_0, \quad \forall t\in [t_0,t_1].
	\end{align}
	By \eqref{eq:g-V}, we see that
	\begin{align} \label{eq:claim-above-1-proof-8}
	\left(2\sqrt{g(\lambda_0)} + \epsilon_0\right)^2 \leq (z'(t))^2 \leq 4g(V''(t)), \quad \forall t\in [t_0,t_1].
	\end{align}
	It follows that $g(V''(t)) > g(\lambda_0)$ for all $t\in [t_0,t_1]$, thus $V''(t) \ne \lambda_0$ and by continuity, $V''(t) >\lambda_0$ for all $t\in [t_0,t_1]$. 
	
	We will prove that there exists a constant $C>0$ such that
	\begin{align} \label{eq:claim-above-1-proof-9}
	V''(t) \geq \lambda_0 +\frac{\sqrt{\epsilon_0}}{C}, \quad \forall t\in [t_0,t_1].
	\end{align}
	Indeed, by the Taylor expansion of $g$ near $\lambda_0$ with the fact $g'(\lambda_0)=0$, there exists $a>0$ such that
	\begin{align} \label{eq:claim-above-1-proof-10}
	g(\lambda) \leq g(\lambda_0) + a(\lambda-\lambda_0)^2, \quad \forall \lambda :  |\lambda-\lambda_0| \leq 1.
	\end{align}
	If $V''(t) \geq \lambda_0+1$, then \eqref{eq:claim-above-1-proof-9} holds by taking $C$ large. If $\lambda_0<V''(t) \leq \lambda_0+1$, then by \eqref{eq:claim-above-1-proof-8} and \eqref{eq:claim-above-1-proof-10}, we get
	\[
	\left(2\sqrt{g(\lambda_0)} +\epsilon_0\right)^2 \leq (z'(t))^2 \leq 4g(V''(t)) \leq 4g(\lambda_0) + 4a (V''(t)-\lambda_0)^2
	\]
	thus
	\[
	4\epsilon_0\sqrt{g(\lambda_0)} + \epsilon_0^2 \leq 4a (V''(t)-\lambda_0)^2.
	\]
	This shows \eqref{eq:claim-above-1-proof-9} with $C=\sqrt{a} [g(\lambda_0)]^{-\frac{1}{4}}$. 
	
	However, by \eqref{eq:claim-above-1-proof-3}, \eqref{eq:claim-above-1-proof-6} and \eqref{eq:claim-above-1-proof-9}, we have
	\begin{align*}
	z''(t_1) &= \frac{1}{z(t_1)} \left(\frac{V''(t_1)}{2} - (z'(t_1))^2\right) \\
	&\geq \frac{1}{z(t_1)} \left(\frac{\lambda_0}{2} + \frac{\sqrt{\epsilon_0}}{2C}- \left(2\sqrt{g(\lambda_0)} +\epsilon_0\right)^2 \right) \\
	&\geq \frac{1}{z(t_1)} \left(\frac{\sqrt{\epsilon_0}}{2C} -4\epsilon_0 \sqrt{g(\lambda_0)} - \epsilon_0^2\right)>0
	\end{align*}
	provided that $\epsilon_0$ is taken small enough. This however contradicts \eqref{eq:claim-above-1-proof-6} and \eqref{eq:claim-above-1-proof-7}. This proves \eqref{eq:claim-above-1-proof-5}. Note that we have also proved \eqref{eq:claim-above-1-proof-9} for all $t\in [t_0,T^*)$. This together with \eqref{eq:claim-above-1-proof-1} imply \eqref{eq:claim-above-1} with $\delta_0=\min \left\{\delta_1,\frac{\sqrt{\epsilon_0}}{C}\right\}$.

	{\bf Case 2.} Let $u_0 \in \Sigma$ satisfy \eqref{eq:ener-above-1}, \eqref{eq:ener-above-2}, \eqref{eq:blow-above-1} and \eqref{eq:blow-above-2}. As in Step 1, we see that the conditions \eqref{eq:ener-above-1}, \eqref{eq:ener-above-2}, \eqref{eq:blow-above-1} and \eqref{eq:blow-above-2} are respectively equivalent to 
	\begin{align} \label{eq:equi-cond-above-blow}
	\lambda_0 \geq 0, \quad (z'(0))^2 \geq 4g(\lambda_0) =\frac{\lambda_0}{2}, \quad V''(0) <\lambda_0, \quad z'(0) \leq 0.
	\end{align}
	We claim that 
	\begin{align} \label{eq:claim-above-2}
	z''(t) <0, \quad \forall t\in [0,T^*).
	\end{align}
	Note that by \eqref{eq:claim-above-1-proof-3}, we have $z''(0)<0$. Assume by contraction that \eqref{eq:claim-above-2} does not hold. Then there exists $t_0 \in (0,T^*)$ such that
	\[
	z''(t) <0, \quad \forall t\in [0,t_0)
	\]
	and $z''(t_0) =0$. By \eqref{eq:equi-cond-above-blow}, we have
	\[
	z'(t) <z'(0) \leq -2\sqrt{g(\lambda_0)}, \quad \forall t\in (0,t_0].
	\]
	Hence $(z'(t))^2 >2g(\lambda_0)$ which combined with \eqref{eq:g-V} imply that 
	\[
	g(V''(t)) > g(\lambda_0), \quad \forall t\in (0,t_0].
	\]
	It follows that $V''(t) \ne \lambda_0$ for all $t\in (0,t_0]$, and by continuity, we have
	\[
	V''(t) <\lambda_0, \quad \forall t\in [0,t_0].
	\]
	By \eqref{eq:claim-above-1-proof-3}, we obtain
	\[
	z''(t_0) = \frac{1}{z(t_0)} \left(\frac{V''(t_0)}{2} -(z'(t_0))^2\right) <\frac{1}{z(t_0)} \left(\frac{\lambda_0}{2}-\frac{\lambda_0}{2}\right) =0
	\]
	which is absurd. 
	Now, assume by contradiction that the solution exists globally forward in time, i.e., $T^*=\infty$. By \eqref{eq:claim-above-2}, we see that
	\[
	z'(t) \leq z'(1) <z'(0)\leq 0, \quad \forall t\in [1,\infty).
	\]
	This contradicts with the fact that $z(t)$ is positive. The proof is complete.	
	\hfill $\Box$

	\section*{Acknowledgement}
	This work was supported in part by the Labex CEMPI (ANR-11-LABX-0007-01). V. D. D. would like to express his deep gratitude to his wife - Uyen Cong for her encouragement and support. 
	
	\begin{bibdiv}
		\begin{biblist}
			
			\bib{ADM}{article}{
				author={Arora, A. K.},
				author={Dodson, B.},
				author={Murphy, J.},
				title={Scattering below the ground state for the 2$d$ radial nonlinear
					Schr\"{o}dinger equation},
				journal={Proc. Amer. Math. Soc.},
				volume={148},
				date={2020},
				number={4},
				pages={1653--1663},
				issn={0002-9939},
			}
			\bib{BF-arXiv}{article}{
				author={Bellazzini, J.},
				author={Forcella, L.},
				title={Dynamical collapse of cylindrical symmetric dipolar Bose-Einstein condensates},
				journal={preprint},
				eprint={http://arxiv.org/abs/2005.02894v1},
			}
			
			\bib{BF}{article}{
				author={De Bouard, A.},
				author={Fukuizumi, R.},
				title={Stability of standing waves for nonlinear Schr\"{o}dinger equations
					with inhomogeneous nonlinearities},
				journal={Ann. Henri Poincar\'{e}},
				volume={6},
				date={2005},
				number={6},
				pages={1157--1177},
				issn={1424-0637},
			}
			
			\bib{Campos}{article}{
				author={Campos, L. },
				title={Scattering of radial solutions to the inhomogeneous nonlinear Schr\"odinger equation},
				journal={Nonlinear Anal.},
				volume={202},
				date={2021},
				pages={112118},
			}
			
			\bib{CC}{article}{
				author={Campos, L. },
				author={Cardoso, M.},
				title={Blow up and scattering criteria above the threshold for the focusing inhomogeneous nonlinear Schr\"odinger equation},
				journal={preprint},
				eprint= {http://arxiv.org/abs/2001.11613},
			}
			
			\bib{CFGM}{article}{
				author={Cardoso, M.},
				author={Farah, L. G.},
				author={Guzm\'an, C. M.},
				author={Murphy, J. },	
				title={Scattering below the ground state for the intercritical non-radial inhomogeneous NLS},
				journal={preprint},
				eprint={http://arxiv.org/abs/2007.06165},
			}
			
			\bib{Cazenave}{book}{
				author={Cazenave, T.},
				title={Semilinear Schr\"{o}dinger equations},
				series={Courant Lecture Notes in Mathematics},
				volume={10},
				publisher={New York University, Courant Institute of Mathematical
					Sciences, New York; American Mathematical Society, Providence, RI},
				date={2003},
				pages={xiv+323},
				isbn={0-8218-3399-5},
			}
			
			\bib{Chen}{article}{
				author={Chen, J.},
				title={On a class of nonlinear inhomogeneous Schr\"{o}dinger equation},
				journal={J. Appl. Math. Comput.},
				volume={32},
				date={2010},
				number={1},
				pages={237--253},
				issn={1598-5865},
			}
			
			\bib{CG}{article}{
				author={Chen, J.},
				author={Guo, B.},
				title={Sharp global existence and blowing up results for inhomogeneous
					Schr\"{o}dinger equations},
				journal={Discrete Contin. Dyn. Syst. Ser. B},
				volume={8},
				date={2007},
				number={2},
				pages={357--367},
				issn={1531-3492},
			}
			
			
			\bib{CHL}{article}{
				author={Cho, Y.},
				author={Hong, S.},
				author={Lee, K.},
				title={On the global well-posedness of focusing energy-critical inhomogeneous NLS},
				journal={J. Evol. Equ.},
				date={2020},
				eprint={http://doi.org/10.1007/s00028-020-00558-1},
			}
			
			\bib{Dinh-NA}{article}{
				author={Dinh, V. D.},
				title={Blowup of $H^1$ solutions for a class of the focusing
					inhomogeneous nonlinear Schr\"{o}dinger equation},
				journal={Nonlinear Anal.},
				volume={174},
				date={2018},
				pages={169--188},
				issn={0362-546X},
			}
			
			\bib{Dinh-JEE}{article}{
				author={Dinh, V. D. },
				title={Energy scattering for a class of the defocusing inhomogeneous
					nonlinear Schr\"{o}dinger equation},
				journal={J. Evol. Equ.},
				volume={19},
				date={2019},
				number={2},
				pages={411--434},
				issn={1424-3199},
			}
			
			\bib{Dinh-weight}{article}{
				author={V. D. Dinh },
				title={Scattering theory in weighted $L^2$ space for a class of the defocusing inhomogeneous nonlinear Schr\"odinger equation},
				journal={to appear in Advances in Pure and Applied Mathematics},
				eprint={https://arxiv.org/abs/1710.01392},
			}
			
			\bib{Dinh-2D}{article}{
				author={V. D. Dinh},
				title={Energy scattering for a class of inhomogeneous nonlinear
					Schr\"{o}dinger equation in two dimensions},
				journal={J. Hyperbolic Differ. Equ.},
				volume={18},
				date={2021},
				number={1},
				pages={1--28},
				issn={0219-8916},
			}
			
			\bib{Dinh-DCDS}{article}{
				author={Dinh, V. D. },
				title={A unified approach for energy scattering for focusing nonlinear Schr\"odinger equations},
				journal={Discrete
					Contin. Dyn. Syst.},
				date={2020},
				volume={40},
				number={11},
				pages={6441--6471},
				issn={1078-0947},
			}
			
			\bib{DM-rad}{article}{
				author={Dodson, B.},
				author={Murphy, J.},
				title={A new proof of scattering below the ground state for the 3D radial
					focusing cubic NLS},
				journal={Proc. Amer. Math. Soc.},
				volume={145},
				date={2017},
				number={11},
				pages={4859--4867},
				issn={0002-9939},
			}
			
			
			\bib{DHR}{article}{
				author={Duyckaerts, T.},
				author={Holmer, J.},
				author={Roudenko, S.},
				title={Scattering for the non-radial 3D cubic nonlinear Schr\"{o}dinger
					equation},
				journal={Math. Res. Lett.},
				volume={15},
				date={2008},
				number={6},
				pages={1233--1250},
				issn={1073-2780},
			}
			
			\bib{DR}{article}{
				author={Duyckaerts, T.},
				author={Roudenko, S.},
				title={Threshold solutions for the focusing 3D cubic Schr\"{o}dinger
					equation},
				journal={Rev. Mat. Iberoam.},
				volume={26},
				date={2010},
				number={1},
				pages={1--56},
				issn={0213-2230},
			}

			\bib{DR-CMP}{article}{
				author={Duyckaerts , T.},
				author={Roudenko, S.},
				title={Going beyond the threshold: scattering and blow-up in the focusing
					NLS equation},
				journal={Comm. Math. Phys.},
				volume={334},
				date={2015},
				number={3},
				pages={1573--1615},
				issn={0010-3616},
			}
			
			\bib{Farah}{article}{
				author={Farah, L. G.},
				title={Global well-posedness and blow-up on the energy space for the
					inhomogeneous nonlinear Schr\"{o}dinger equation},
				journal={J. Evol. Equ.},
				volume={16},
				date={2016},
				number={1},
				pages={193--208},
				issn={1424-3199},
			}
			
			\bib{FG-JDE}{article}{
				author={Farah, L. G.},
				author={Guzm\'{a}n, C. M.},
				title={Scattering for the radial 3D cubic focusing inhomogeneous
					nonlinear Schr\"{o}dinger equation},
				journal={J. Differential Equations},
				volume={262},
				date={2017},
				number={8},
				pages={4175--4231},
				issn={0022-0396},
			}
			
			\bib{FG-BBMS}{article}{
				author={Farah , L. G.},
				author={Guzm\'{a}n, C. M.},
				title={Scattering for the radial focusing inhomogeneous NLS equation in
					higher dimensions},
				journal={Bull. Braz. Math. Soc. (N.S.)},
				volume={51},
				date={2020},
				number={2},
				pages={449--512},
				issn={1678-7544},
			}
			
			\bib{FXC}{article}{
				author={Fang, D.},
				author={Xie, J.},
				author={Cazenave, T.},
				title={Scattering for the focusing energy-subcritical nonlinear
					Schr\"{o}dinger equation},
				journal={Sci. China Math.},
				volume={54},
				date={2011},
				number={10},
				pages={2037--2062},
				issn={1674-7283},
			}
			
			\bib{FW}{article}{
				author={Fibich, G.},
				author={Wang, X.},
				title={Stability of solitary waves for nonlinear Schr\"{o}dinger equations
					with inhomogeneous nonlinearities},
				journal={Phys. D},
				volume={175},
				date={2003},
				number={1-2},
				pages={96--108},
				issn={0167-2789},
			}
			
			\bib{Foschi}{article}{
				author={Foschi, D.},
				title={Inhomogeneous Strichartz estimates},
				journal={J. Hyperbolic Differ. Equ.},
				volume={2},
				date={2005},
				number={1},
				pages={1--24},
				issn={0219-8916},
			}
			
			\bib{FO}{article}{
				author={Fukuizumi, R.},
				author={Ohta, M.},
				title={Instability of standing waves for nonlinear Schr\"{o}dinger equations
					with inhomogeneous nonlinearities},
				journal={J. Math. Kyoto Univ.},
				volume={45},
				date={2005},
				number={1},
				pages={145--158},
				issn={0023-608X},
			}
			
			\bib{GW}{article}{
				author={Gao, Y.},
				author={Wang, Z.},
				title={Below and beyond the mass-energy threshold: scattering for the
					{H}artree equation with radial data in $d\geq 5$},
				journal={Z. Angew. Math. Phys.},
				volume={71},
				date={2020},
				number={2},
				pages={Paper No. 52, 23},
			}
			
			\bib{GS}{article}{
				author={Genoud, F.},
				author={Stuart, C. A.},
				title={Schr\"{o}dinger equations with a spatially decaying nonlinearity:
					existence and stability of standing waves},
				journal={Discrete Contin. Dyn. Syst.},
				volume={21},
				date={2008},
				number={1},
				pages={137--186},
				issn={1078-0947},
			}
			
			
			\bib{Genoud-2d}{article}{
				author={Genoud, Fran\c{c}ois},
				title={A uniqueness result for $\Delta u-\lambda u+V(|x|)u^p=0$ on $\Bbb R^2$},
				journal={Adv. Nonlinear Stud.},
				volume={11},
				date={2011},
				number={3},
				pages={483--491},
				issn={1536-1365},
			}
			
			
			\bib{Glassey}{article}{
				author={Glassey, R. T.},
				title={On the blowing up of solutions to the Cauchy problem for nonlinear
					Schr\"{o}dinger equations},
				journal={J. Math. Phys.},
				volume={18},
				date={1977},
				number={9},
				pages={1794--1797},
				issn={0022-2488},
			}
			
			\bib{Guevara}{article}{
				author={Guevara, C. D.},
				title={Global behavior of finite energy solutions to the $d$-dimensional
					focusing nonlinear Schr\"{o}dinger equation},
				journal={Appl. Math. Res. Express. AMRX},
				date={2014},
				number={2},
				pages={177--243},
				issn={1687-1200},
			}
			
			\bib{Guzman}{article}{
				author={Guzm\'{a}n, C. M.},
				title={On well posedness for the inhomogeneous nonlinear Schr\"{o}dinger
					equation},
				journal={Nonlinear Anal. Real World Appl.},
				volume={37},
				date={2017},
				pages={249--286},
				issn={1468-1218},
			}
			
			\bib{Gill}{article}{
				author={Gill, T. S.},
				title={Optical guiding of laser beam in nonuniform plasma},
				journal={Pramana},
				volume={55},
				date={2000},
				pages={835-842},
				issn={0973-7111},
			}
			
			\bib{JC}{article}{
				author={Jeanjean, L.},
				author={Le Coz, S.},
				title={An existence and stability result for standing waves of nonlinear
					Schr\"{o}dinger equations},
				journal={Adv. Differential Equations},
				volume={11},
				date={2006},
				number={7},
				pages={813--840},
			}
			
			
			\bib{KT}{article}{
				author={Keel, M.},
				author={Tao, T.},
				title={Endpoint Strichartz estimates},
				journal={Amer. J. Math.},
				volume={120},
				date={1998},
				number={5},
				pages={955--980},
				issn={0002-9327},
			}
			
			\bib{KM}{article}{
				author={Kenig, C. E.},
				author={Merle, F.},
				title={Global well-posedness, scattering and blow-up for the
					energy-critical, focusing, non-linear Schr\"{o}dinger equation in the radial
					case},
				journal={Invent. Math.},
				volume={166},
				date={2006},
				number={3},
				pages={645--675},
				issn={0020-9910},
			}
			
			\bib{LT}{article}{
				author = {Liu, C. S.},
				author = {Tripathi, V. K.},
				title = {Laser guiding in an axially nonuniform plasma channel},
				journal = {Physics of Plasmas},
				volume = {1},
				number = {9},
				pages = {3100-3103},
				year = {1994},
			}
			
			\bib{LWW}{article}{
				author={Liu, Y.},
				author={Wang, X.},
				author={Wang, K.},
				title={Instability of standing waves of the Schr\"{o}dinger equation with
					inhomogeneous nonlinearity},
				journal={Trans. Amer. Math. Soc.},
				volume={358},
				date={2006},
				number={5},
				pages={2105--2122},
				issn={0002-9947},
			}
			
			
			\bib{Martel}{article}{
				author={Martel, Y.},
				title={Blow-up for the nonlinear Schr\"{o}dinger equation in nonisotropic
					spaces},
				journal={Nonlinear Anal.},
				volume={28},
				date={1997},
				number={12},
				pages={1903--1908},
				issn={0362-546X},
			}
			
			
			\bib{Merle}{article}{
				author={Merle, F.},
				title={Nonexistence of minimal blow-up solutions of equations
					$iu_t=-\Delta u-k(x)|u|^{4/N}u$ in ${\bf R}^N$},
				language={English, with English and French summaries},
				journal={Ann. Inst. H. Poincar\'{e} Phys. Th\'{e}or.},
				volume={64},
				date={1996},
				number={1},
				pages={33--85},
				issn={0246-0211},
			}
			
			\bib{MMZ}{article}{
				author={Miao, C. },
				author={Murphy, J.},
				author={Zheng, J.},
				title={Scattering for the non-radial inhomogeneous NLS},
				journal={to appear in Mathematical Research Letters},
				eprint= {http://arxiv.org/abs/1912.01318},
			}
			
			\bib{OK}{book}{
				author={Opic, B.},
				author={Kufner, A.},
				title={Hardy-type inequalities},
				series={Pitman Research Notes in Mathematics Series},
				volume={219},
				publisher={Longman Scientific \& Technical, Harlow},
				date={1990},
				pages={xii+333},
			}
			
			\bib{RS}{article}{
				author={Rapha\"{e}l, P.},
				author={Szeftel, J.},
				title={Existence and uniqueness of minimal blow-up solutions to an
					inhomogeneous mass critical NLS},
				journal={J. Amer. Math. Soc.},
				volume={24},
				date={2011},
				number={2},
				pages={471--546},
				issn={0894-0347},
			}
			
			\bib{Strauss}{article}{
				author={Strauss, W. A.},
				title={Existence of solitary waves in higher dimensions},
				journal={Comm. Math. Phys.},
				volume={55},
				date={1977},
				number={2},
				pages={149--162},
				issn={0010-3616},
			}	
			
			\bib{SS}{book}{
				author={Sulem, C.},
				author={Sulem, P. L.},
				title={The nonlinear Schr\"{o}dinger equation},
				series={Applied Mathematical Sciences},
				volume={139},
				note={Self-focusing and wave collapse},
				publisher={Springer-Verlag, New York},
				date={1999},
				pages={xvi+350},
				isbn={0-387-98611-1},
			}
			
			\bib{Tao-DPDE}{article}{
				author={Tao, T.},
				title={On the asymptotic behavior of large radial data for a focusing
					non-linear Schr\"{o}dinger equation},
				journal={Dyn. Partial Differ. Equ.},
				volume={1},
				date={2004},
				number={1},
				pages={1--48},
				issn={1548-159X},
			}
			
			\bib{Tao}{book}{
				author={Tao , T.},
				title={Nonlinear dispersive equations},
				series={CBMS Regional Conference Series in Mathematics},
				volume={106},
				note={Local and global analysis},
				publisher={Published for the Conference Board of the Mathematical
					Sciences, Washington, DC; by the American Mathematical Society,
					Providence, RI},
				date={2006},
				pages={xvi+373},
				isbn={0-8218-4143-2},
			}
			

			\bib{Toland}{article}{
				author={Toland, J. F.},
				title={Uniqueness of positive solutions of some semilinear
					Sturm-Liouville problems on the half line},
				journal={Proc. Roy. Soc. Edinburgh Sect. A},
				volume={97},
				date={1984},
				pages={259--263},
				issn={0308-2105},
				review={\MR{751198}},
			}
			
			\bib{TM}{article}{
				author={Towers, I.},
				author={Malomed, B. A.},
				title={Stable $(2+1)$-dimensional solitons in a layered medium with
					sign-alternating Kerr nonlinearity},
				journal={J. Opt. Soc. Amer. B Opt. Phys.},
				volume={19},
				date={2002},
				number={3},
				pages={537--543},
				issn={0740-3224},
			}
			
			
			\bib{XZ}{article}{
				author={Xu, C.},
				author={Zhao, T.}
				title={A remark on the scattering theory for the 2D radial focusing INLS},
				journal={preprint},
				eprint={https://arxiv.org/abs/1908.00743},
			}
			
			\bib{Yanagida}{article}{
				author={Yanagida, E.},
				title={Uniqueness of positive radial solutions of $\Delta
					u+g(r)u+h(r)u^p=0$ in ${\bf R}^n$},
				journal={Arch. Rational Mech. Anal.},
				volume={115},
				date={1991},
				number={3},
				pages={257--274},
				issn={0003-9527},
			}
			
			\bib{Zhu}{article}{
				author={Zhu, S.},
				title={Blow-up solutions for the inhomogeneous Schr\"{o}dinger equation with
					$L^2$ supercritical nonlinearity},
				journal={J. Math. Anal. Appl.},
				volume={409},
				date={2014},
				number={2},
				pages={760--776},
				issn={0022-247X},
			}
			
		\end{biblist}
	\end{bibdiv}
	
\end{document}